\documentclass[12pt]{article}
\usepackage{amsmath,amsthm,amssymb,latexsym,color}
\usepackage{bbm}

\usepackage{hyperref}

\let\oldproofname=\proofname
\renewcommand{\proofname}{\rm\bf{\oldproofname}}

\title{On random diameters of  convex bodies}
\author{O. Gu\'edon  \and A. E. Litvak \and K. Tatarko \and B.-H. Vritsiou
}

\date{ }

\newcommand\address{\noindent\leavevmode
	
	\medskip

	\noindent
	O. Gu\'{e}don \\
	Univ Gustave Eiffel, Univ Paris Est Creteil, \\
	CNRS, LAMA UMR8050 F-77447 Marne-la-Vallée, France\\
	\texttt{\small%
		e-mail:  olivier.guedon@univ-eiffel.fr}
	
	\medskip

	\noindent
	A.E. Litvak
	and B.-H. Vritsiou\\
	Department of Mathematical~and Statistical~Sciences,\\
	University of Alberta, \\
	Edmonton, AB, T6G 2N8, Canada\\
	\texttt{\small
		e-mails:  aelitvak@gmail.com \, \, and \, \, vritsiou@ualberta.ca
		}
		
		\medskip
		
		\noindent
		K. Tatarko \\
		Department of Pure Mathematics, \\
	 	University of Waterloo, \\
		Waterloo, ON, N2L 3G1, Canada \\
		\texttt{\small
		e-mail: ktatarko@uwaterloo.ca }
}

\newcommand{\RR}{\mathbb{R}}
\newcommand{\N}{\mathbb{N}}
\newcommand{\Rn}{\RR^n}
\newcommand{\RN}{\RR^{n+1}}
\newcommand{\RNN}{\RR^{N}}
\newcommand{\Rk}{\RR^k}
\newcommand{\Rm}{\RR^m}

\newcommand{\E}{\mathbb{E}}

\newcommand{\Gn}{\Gamma_n}

\newcommand{\Gkn}{\Gamma_{kn}}

\newcommand{\Id}{\mathrm{Id}}
\newcommand{\im}{\mathrm{Im \,}}
\newcommand{\rad}{R}
\newcommand{\eps}{\varepsilon}

\renewcommand{\P}{\mathbb{P}}

\newtheorem{theorem}{Theorem}[section]
\newtheorem{lemma}[theorem]{Lemma}
\newtheorem{remark}[theorem]{Remark}

\newtheorem{proposition}[theorem]{Proposition}

\newtheorem{corollary}[theorem]{Corollary}

\begin{document}

\maketitle

\begin{abstract}
 Let $K \subset \RNN$ be a convex body containing the origin in its interior. In this work, we study the diameters of random sections of $K$ and derive upper and lower bounds for them in terms of geometric parameters of $K$. Our bounds hold with large probability, and they offer new insights into this widely studied subject. Our upper bound complements the so-called low $M^*$-estimate and in many cases it is much sharper. The two lower bounds that we give are each of a different nature: depending on the body in question each time, either could be better, and in many interesting cases it matches the upper bound too. Subsequently, we apply our results to determine random diameters of $p$-ellipsoids (images of $\ell_p$ balls under diagonal operators), improving upon previously known results and achieving sharp estimates in many cases. One notable application is to Information-Based Complexity Theory, where we manage to establish a  simple (and essentially optimal) dichotomy in response to a very natural conjecture posed by Hinrichs, Prochno and Sonnleitner in 2023. Our solution settles precisely when it is useful to replace the optimal information used for the recovery of vectors from a $p$-ellipsoid with random (Gaussian) information, which can be more practical to obtain.
\end{abstract}

\bigskip

{\small
\noindent{\bf AMS 2020 Classification:}
primary:      52A23, 46B06, 46B09;
secondary: 60G15, 52A20, 46B20, 65D15, 65Y20.
\\
\noindent
{\bf Keywords:} Convex body, Gaussian processes, $\ell_p$-balls, $p$-ellipsoids,
random diameter, random Gelfand numbers, random radius, random sections.

\section{Introduction}
\label{sec:introduction}
Our study is motivated by questions that naturally arise
in Asymptotic Geometric Analysis (see e.g. \cite{MS-1200, Pisier1989, NTJ-book, AGM})
and in Information Theory, particularly in Information-Based
Complexity (IBC) Theory, as  presented in \cite{HKNPU} and \cite{HPS}
(see also the books \cite{NW-1, NW-2, NW-3}  for a comprehensive introduction to the theory).
Let $K$ be a convex body in $\RNN$ with the origin
in its interior. In IBC theory, a  central goal is to recover a vector $x \in K$ from given data $T_n(x) \in \RR^n$  obtained as the output
of  a linear mapping $T_n : \RNN \to \RR^n$, where $n < N$. The recovery algorithm must be optimized, and
the error is measured in the Euclidean norm. It is known (see e.g.
\cite[Theorem~4.2]{NW-1}) that the worst case error of the best recovery algorithm  is determined by the radius of
$K \cap \ker T_n$. This raises the following questions: What happens if the data comes from a random Gaussian mapping instead of a fixed linear map $T_n$?
Can the radius of random information decay at a rate comparable to that of optimal information - or at least sufficiently fast to zero, uniformly in $N$ and as $n$ grows? Or is it bounded from below?
In the papers \cite{HKNPU, HPS}, the authors addressed these natural questions for ellipsoids and $p$-ellipsoids
(i.e., images of standard $\ell_p$-balls under diagonal transformations). For a broader context and related problems,
we also refer to the two recent surveys \cite{HKNPU-Survey, KU2026-ActaNumerica}.

On the other hand, the study of the diameter of best (or of random) sections of convex bodies corresponds to
Gelfand numbers (respectively, random Gelfand numbers). This topic has a long history in  Asymptotic Geometric Analysis and  Approximation Theory,
dating back to Dvoretzky's theorem. Since Dvoretzky's theorem guarantees the existence of a Euclidean section
of a convex body, it gives something stronger than merely bounding the diameter. As a result, it only provides  bounds for relatively low dimensional sections, with dimension up to an important parameter of the convex body called the Dvoretzky dimension.
For truly high-dimensional sections (i.e. those of small codimension), the standard approach relies on the
so-called Milman's {\it ``low $M^*$-estimate"} \cite{M1985, PajorTomczak, Gordon1988} (see Remark~\ref{M*}), which
can yield  sharp bounds when the body $K$ is in a specific position \cite{GianMil1997}.
Nevertheless, as already observed by Giannopoulos and Milman  (see Example 2.2 in \cite{GianMil1997}), for an ellipsoid with highly incomparable lengths of semi-axes,
this method fails to provide  any non-trivial bound for the diameter of random sections.

The goal of this paper is twofold. First, we address the general case of arbitrary convex bodies.
We develop a novel, general approach that is significantly more sophisticated than the
low $M^*$-estimate. It is partially inspired by \cite{HKNPU}, where the authors
investigated the random diameters of  ellipsoids. Additionally, we  provide two distinct lower bounds,
which are incomparable and complement each other.
Second, we turn to important applications in IBC Theory, focusing on the case of
 $p$-ellipsoids defined as images of $\ell_p$-balls under diagonal operators with $\sigma = \{ \sigma_i \}_{i \ge 1}$
 on the diagonal (see definition in \eqref{def:Ep} below).
We reveal a clear dichotomy, depending on the lengths $\sigma_i$ of the semi-axes of
the $p$-ellipsoid. Our results are derived in a finite dimensional setting, meaning we always work in
$\RR^N$. By letting $N \to \infty$, we establish that
if $\sigma \in \ell_{p'}$ (where $p'$ is the conjugate exponent of $p$) then random information is useful and almost as effective as the optimal information,
while random information is useless if $\sigma \notin \ell_{p'}$. This answers a question raised in \cite{HPS}.
We finally consider several examples depending on the decay of the sequence $\sigma$:
\begin{itemize}
	\item[--] for the case of exponential decay, we answer the aforementioned Giannopoulos--Milman question from \cite{GianMil1997} in the setting where the
	semi-axes have highly incomparable lengths,
	\item[--] for the case of polynomial decay, we provide complete answers to all the questions raised in \cite{HKNPU, HPS} regarding the behavior of the decay of random radii.
\end{itemize}
In these cases, our upper and lower bounds are sharp with respect to the dependencies on both the section's codimension $n$
and the ambient space dimension $N$. It is interesting to note that our upper bound, Theorem~\ref{MainThm} complements
the above mentioned low $M^*$-estimate (see Remark~\ref{M*}). The two bounds are incomparable and both need to be used, depending in each case on the behavior of the axis lengths of the ellipsoid. Similarly, we have two incomparable lower bounds, Theorems~\ref{LowerThm}   and \ref{LowerGeneralThm}, and again both need to be considered, depending on the behavior of the semi-axes, in order to obtain sharp results.

\medskip

We now formally define the random radius and discuss immediate bounds derived from  Dvoretzky's theorem.
Note that it is more convenient to work with the radius of a body (the smallest radius of the Euclidean ball
containing the body) rather than with its diameter, but this is without loss of generality as the two are equivalent up to a factor of 2.
The $n$-th random radius of a convex body $K\subset \RNN$ which contains the origin, denoted by ${\cal R}_n(K)$, is
defined as the radius of the section of $K$ by an $n$-codimensional subspace of $\RNN$ uniformly distributed  with respect to the
 normalized Haar measure on the Grassmannian of all $n$-codimensional subspaces
  of $\RNN$. In other words, ${\cal R}_n(K)$
 is the random variable defined by
\begin{equation}
	\label{eq:def}
{\cal R}_n(K) = \max_{y \in E_n \cap K} |y|,
\end{equation}
where $E_n$ is a random $n$-codimensional subspace of $\RNN$ and $|y|$ denotes  the Euclidean norm of $y$.

In Asymptotic Geometric Analysis, random radii play a central role in the study of properties of high-dimensional convex
bodies, and of their sections and projections.
They naturally come up in Milman's celebrated version of the  Dvoretzky theorem \cite{M-Dvor} (note that Milman's proof works in the
non-symmetric case as well; see also \cite{Gordon1985, Gordon-ann-88} for a proof using Gaussian processes which covers the non-symmetric case too).
This theorem  states that for every convex body $K$ containing the origin in its interior, a random section $K\cap E$
of dimension  $d_K =   (\ell(K)/(2b_K))^2$  is almost Euclidean with probability at least
$1-4\exp(-d_K/16)$, that is,
\begin{equation}
	\label{DvorEstimate}
     \frac{\sqrt{N}}{8\ell(K) } B_2^N  \cap E \subset K\cap E \subset \frac{8\sqrt{N}}{ \ell(K)} B_2^N  \cap E,
\end{equation}
where $B_2^N$ is the canonical Euclidean ball in $\RNN$, $1/b_K$ is the inradius of $K$ (i.e., $b_K$ is the minimal number
 satisfying $(1/b_K)  B_2^N\subset K$,
and also coincides with the Lispchitz constant of $\|\cdot\|_K$),  and  $\ell(K)$ is the expectation of $\|G\|_K$ for a
standard Gaussian vector in $\RNN$ (see the next section for precise definitions). The probability bound follows
from \cite{Gordon1985, Gordon-ann-88} combined with the
Gaussian concentration inequality~(\ref{ineq:concentration}).
In particular, with the aforementioned probability, the random radius satisfies
\begin{equation}
	\label{DvorEstimate2}
   \frac{\sqrt{N}}{8\ell(K) }\leq {\cal R}_n(K)  \leq \frac{8\sqrt{N}}{ \ell(K)}
\end{equation}
whenever $n\geq N-d_K$.
We would also like to note that $\ell(K)$ has a clear geometric
meaning. It can be expressed as $\ell(K)=c_N\sqrt{N}M_K$ where $c_N\in (0, 1)$, $c_N\to 1$, and $M_K$ is the average of  $\|\cdot\|_K$
on the unit Euclidean sphere $S^{N-1}$ with respect to the normalized Haar measure.
Observe that the radius of $K \cap E$ measures the size of the smallest Euclidean ball containing the set $K \cap E$, focusing only on the right hand side
of \eqref{DvorEstimate}.
 By leveraging this observation, Klartag and Vershynin \cite{Klartag-Vershynin-paper} managed to extend the validity of the upper bound in \eqref{DvorEstimate2} for random sections of dimension up to another
parameter which can be sometimes, depending on the position of the convex body, significantly larger than $d_K$ (see also  \cite{Paouris-Tikhomirov-Valettas-paper}).
Note however that the bounds on ${\cal R}_n(K)$ remain of the same form, that is, they are of the order of $\sqrt{N}/\ell(K)$.
Thus, as can also be confirmed by our results, in general these estimates can only apply to a restricted range of really large codimensions.

A key challenge is
to understand how the radius of a section behaves when the subspace $E$ has smaller codimension and when the convex body is not in any particular position.
The smallest possible radius of an $n$-codimensional section of $K$ by a subspace $E$ corresponds to the $(n+1)$-th Gelfand number of the identity operator $I : (\RNN, \|\cdot\|_K) \to \ell_2^N$, denoted by $c_{n+1}(K)$.
These numbers have been extensively studied in both Asymptotic Geometric Analysis and Approximation Theory
\cite{Pisier1989, Carl-book}. Since the best section is typically unavailable, the standard approach is to estimate
${\cal R}_n(K)$ and demonstrate that the random radius behaves similarly to the best one whenever possible.
This approach has led to  sharp results in Approximation Theory, such as estimates of
Gelfand numbers for the unit ball of $\ell_1^N$ \cite{K77, GG1984}.  In Asymptotic Geometric Analysis, an approach based on the low $M^*$-estimate was developed
 in \cite{M1985, PajorTomczak, Gordon1988} and random radii were intensively studied  in several works (see e.g.
 \cite{GiaM1998, LT2000, LitvPajorTom, EmMil}). Moreover, a phenomenon that could be coined as {\it deterministic implies random} was
 discovered, in the sense that the random radius cannot be significantly worse than the best one. Following \cite{LT2000},
we define a certain quantile of ${\cal R}_n(K)$  as {\it a random Gelfand number}, denoted by $cr_{n+1}(K)$.
An immediate consequence of Theorem~3.2 in \cite{LT2000}  is that for some absolute constants
$C_0, C_1 >1$ and any $\mu:=n/k>C_0$,  one has
\begin{equation} \label{detimran-1}
 cr_{n+1}(K)\leq C_1 \sqrt{N/n}\, c_{k+1}(K)
 \end{equation}
(and the result holds in the non-symmetric case as well). Later in \cite{LitvPajorTom} it was shown
that in the centrally-symmetric case  one can take $\mu >1$ at the cost of a larger factor in front of $c_{k+1}(K)$ which becomes
\begin{equation} \label{detimran-2}
 cr_{n+1}(K)\leq C_1   \sqrt{N/n}^{1+1/(\mu-1)} \, c_{k+1}(K) .
\end{equation}
 Note that the dimension $N$ of the ambient space appears and the factor
$\sqrt{N/ n}$ can be very large for small codimensions. The primary goal of this work is to address the case of
 small codimensions.  Our lower bounds demonstrate that, in various cases, a random section can yield
 a worse estimate than the optimal one, showing that the factor $\sqrt{N/n}$ may be unavoidable. This is illustrated
 in Section~\ref{sec:example}.

\smallskip
We now present our main general results, which provide, with high probability,  upper and  lower bounds for the random radius
of an $n$-codimensional section of a convex body $K$. Here, we use the standard notation $\ell_*(K)$ for $\ell(K^\circ)$.

\begin{theorem}
	\label{MainThm}
There exist universal constants $C\ge 2$ and $0< \gamma  < 1$ such that for every
$0 < \eps \le 1/(2C)$ the following holds.
Let $1\le n \le N$ and  $K \subset \RNN$ be a convex body containing the origin in its interior.
Then for every $1 \le k \le n$ and  every subspace $E$  of codimension $k$, one has
\[
{\cal R}_n(K) \le
\left(  \frac{6 \sqrt{\log(1/\eps)}}{\eps} \sqrt{\frac{2n}{n-k+1}}\right)  \rad(P_E K) + \frac{2 \sqrt n}{\eps (n-k+1)} \
\ell_*(P_E K )
\]	
with probability at least
\[
1 - 3 (C \eps)^{n-k+1} - k e^{- \gamma n}.
\]
\end{theorem}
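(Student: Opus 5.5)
Realize the random $n$-codimensional subspace as $E_n=\ker G$, where $G$ is an $n\times N$ standard Gaussian matrix. Then ${\cal R}_n(K)=\max\{|y|: y\in K,\ Gy=0\}$. Fix the codimension-$k$ subspace $E$ and write every $y\in K$ as $y=P_Ey+P_{E^\perp}y$. The first term is controlled by $|P_Ey|\le \rad(P_EK)$. It therefore suffices to bound $|P_{E^\perp}y|$ for $y\in K\cap\ker G$.

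Split $G=G_1+G_2$ with $G_1=GP_E$ and $G_2=GP_{E^\perp}$. These are independent Gaussian maps. Restricted to the $k$-dimensional space $E^\perp$, $G_2$ is an $n\times k$ Gaussian matrix with $k\le n$. The condition $Gy=0$ gives $G_2P_{E^\perp}y=-G_1P_Ey$, so
\[
|P_{E^\perp}y|\le s_{\min}(G_2)^{-1}\,|G_1 P_E y| .
\]

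We bound the two factors as follows.

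\textbf{Smallest singular value.} For an $n\times k$ Gaussian matrix the classical small-ball estimate (Edelman/Rudelson--Vershynin type) gives
\[
\P\bigl(s_{\min}(G_2)\le \eps(\sqrt n-\sqrt{k-1})\bigr)\le (C\eps)^{n-k+1}+e^{-\gamma n}.
\]
Note that $\sqrt n-\sqrt{k-1}\ge (n-k+1)/(2\sqrt n)$. This produces the factor $2\sqrt n/(\eps(n-k+1))$ and accounts for part of the probability. If a version with only a factor $k$ in front of $e^{-\gamma n}$ is needed, I would instead obtain it by a union/iteration over the $k$ columns (distance of each column to the span of the others, with a net argument). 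This is probably where the term $k e^{-\gamma n}$ comes from.

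\textbf{Gaussian process.} The second factor is $\sup_{z\in P_EK}|G_1 z|$, where $G_1$ restricted to $E$ is an $n\times(N-k)$ Gaussian matrix. By Gordon/Chevet-type inequalities,
\[
\E\sup_{z\in P_EK}|G_1z|\le \sqrt n\,\rad(P_EK)+\ell_*(P_EK).
\]
Gaussian concentration then gives deviations of order $t\,\rad(P_EK)$ with probability $e^{-t^2/2}$. Choosing $t\sim\sqrt{n\log(1/\eps)}$ in a way matched to the $(C\eps)^{n-k+1}$ budget, combining with the singular value bound, and multiplying yields
\[
\frac{2\sqrt n}{\eps(n-k+1)}\bigl(c\sqrt{\log(1/\eps)}\,\sqrt n\,\rad(P_EK)+\ell_*(P_EK)\bigr).
\]

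Comparing with the stated coefficient $6\sqrt{\log(1/\eps)}\,\eps^{-1}\sqrt{2n/(n-k+1)}$ shows this is too crude by a factor $\sqrt{n/(n-k+1)}$. The main obstacle is to use $s_{\min}$ only on the $\ell_*$ part and to treat the $\rad$ part more cleverly. My plan for this: condition on $G_2$, whose range is a $k$-dimensional subspace $F\subset\RR^n$. Write $G_2^{-1}$ on its range as acting through the singular values $s_1\ge\dots\ge s_k$. Then $|G_2^{-1}w|^2=\sum_i \langle w,u_i\rangle^2/s_i^2$. For a fixed $z$, $G_1z$ is Gaussian with variance $|z|^2$ and independent of $G_2$. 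Hence
\[
\E\bigl(|G_2^{-1}P_F G_1z|^2\,\big|\,G_2\bigr)=|z|^2\sum_i s_i^{-2}=|z|^2\,\|G_2^{-1}\|_{HS}^2 .
\]
The Hilbert--Schmidt norm of the pseudo-inverse of an $n\times k$ Gaussian matrix is typically of order $\sqrt{k/(n-k+1)}$. Its tail at level $\eps^{-1}\sqrt{n/(n-k+1)}$ has probability about $(C\eps)^{n-k+1}$, which I would derive from the known negative moments of $s_{\min}$ and of the distances of columns to spans. Then:
\begin{itemize}
\item the $\rad$ term comes from a fixed-direction (supremum via Gaussian concentration with Lipschitz constant $\rad(P_EK)\|G_2^{-1}\|_{op}$) estimate, giving $\sqrt{\log(1/\eps)}\,\eps^{-1}\sqrt{n/(n-k+1)}\,\rad(P_EK)$;
\item the mean term comes from Chevet, $\E\sup\le \|G_2^{-1}\|_{op}\,\ell_*+\|G_2^{-1}\|_{HS}\,\rad$, which gives the $\ell_*$ term with the $s_{\min}$ factor.
\end{itemize}
Summing the failure probabilities gives $3(C\eps)^{n-k+1}+ke^{-\gamma n}$.
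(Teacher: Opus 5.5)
Your refined plan is, step for step, the paper's proof. With $A$ the left inverse of the $n\times k$ block $G_2$ (the paper's $\Gamma_{kn}$), one has $P_{E^\perp}y=-AG_1P_Ey$ with $A$ independent of $G_1$. Chevet--Gordon gives $\E\sup_{z\in P_EK}|AG_1z|\le \rad(P_EK)\|A\|_{HS}+\ell_*(P_EK)\|A\|$. Concentration with Lipschitz constant $\|A\|\rad(P_EK)$ is then used at deviation level $u=\sqrt{2(n-k+1)\log(1/\eps)}$, which costs $\eps^{n-k+1}$. The deviation must scale with $\sqrt{n-k+1}$: your earlier choice $t\sim\sqrt{n\log(1/\eps)}$ would by itself reintroduce the lost factor $\sqrt{n/(n-k+1)}$.

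What is not established is the bound $\|A\|_{HS}\lesssim \eps^{-1}\sqrt{n/(n-k+1)}$ with failure probability $2(C\eps)^{n-k+1}+ke^{-\gamma n}$, and the stated probability rests on exactly this. Neither route you mention delivers it.
\begin{itemize}
\item Using only $s_{\min}$ gives $\|A\|_{HS}\le\sqrt k\,\|A\|\lesssim \eps^{-1}\sqrt{kn}/(n-k+1)$. This exceeds the target by $\sqrt{k/(n-k+1)}$. That is harmless for $k\le n/2$, but reintroduces the lost factor when $k$ is close to $n$, which is the regime where the theorem matters.
\item The identity $\|A\|_{HS}^2=\sum_{j\le k}d_j^{-2}$ holds, where $d_j\sim\chi_{n-k+1}$ is the distance of the $j$-th column to the span of the others. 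But a union bound over the columns costs $k(C\eps)^{n-k+1}$. This is vacuous for $k$ near $n$: for $k=n$ it is $nC\eps$, versus the $2C\eps+ne^{-\gamma n}$ needed.
\item Markov with negative moments cannot reach the exponent $n-k+1$, since $\E d_j^{-2q}<\infty$ only for $2q<n-k+1$.
\end{itemize}

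The missing idea is interlacing. By the max-min formula with $V=\RR^i$, $s_i(\Gamma_{kn})\ge s_{\min}(\Gamma_{in})$, where $\Gamma_{in}$ consists of the first $i$ columns. Apply the Rudelson--Vershynin bound to each $\Gamma_{in}$, $i=1,\dots,k$, at the graded level $\eps(\sqrt n-\sqrt{i-1})\ge\eps(n-i+1)/(2\sqrt n)$. Summing $\sum_{i\le k}(C\eps)^{n-i+1}\le 2(C\eps)^{n-k+1}$ (as $C\eps\le 1/2$), you get simultaneously $\|A\|\le 2\sqrt n/(\eps(n-k+1))$ and $\|A\|_{HS}^2\le \frac{4n}{\eps^2}\sum_{i\le k}(n-i+1)^{-2}\le\frac{8n}{\eps^2(n-k+1)}$. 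This holds outside an event of probability $2(C\eps)^{n-k+1}+ke^{-\gamma n}$. In particular, the term $ke^{-\gamma n}$ comes from these $k$ applications of the singular-value estimate, not from a net argument over columns.
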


\begin{remark}
In  Theorem \ref{MainThm} (as well as in  Corollary~\ref{cor:lp-ellipsoid} below),
it is natural to minimize the upper bound over all choices of the parameter $k$.
Let $k_0$ and $E_0$ be chosen to minimize the deterministic quantity
\[
\left(  \frac{6 \sqrt{\log(1/\eps)}}{\eps} \sqrt{\frac{2n}{n-k+1}}\right)  \rad(P_E K) +  \frac{2 \sqrt n}{\eps (n-k+1)} \ \ell((P_E K )^\circ)
\]
over every $1 \le k \le n$ and every $E$ of codimension $k$. Then ${\cal R}_n(K)$ is bounded by the corresponding
quantity for $k_0$ and $E_0$ with probability at least
\[
1 - 3  (C \eps)^{n-k_0+1} - k_0 e^{- \gamma n}.
\]
Note also that the option $k_0=n$ is a possible choice that leads to a probability greater than a fixed parameter
$1-\delta$ (for $n$ sufficiently large, and by choosing $\eps$ small enough).
\end{remark}

\begin{remark}\label{M*}  Theorem~\ref{MainThm} should be compared with a standard tool in Asymptotic Geometric Analysis for
bounding random radii --- the  low $M^*$-estimate" \cite{M1985, PajorTomczak},  which states that, for every
 $\lambda \in (0,1)$ and $n=\lambda N$, with high probability one has ${\cal R}_n(K) \leq 2 \ell^*(K)/\sqrt{n}$
(for precise constants and probability estimates we refer to \cite{Gordon1988}, where the Gaussian approach was used).
As our examples in Section~\ref{sec:example}
show, the two bounds are incomparable and, depending on the convex body in question each time, either of them could be sharp. In particular, in the case
of an ellipsoid with exponential decay of axis lengths, Theorem~\ref{MainThm} provides a sharp bound, while the  low $M^*$-estimate
leads to a very weak estimate, the latter fact already having been observed in Example~2.2 of \cite{GianMil1997}.
\end{remark}

We would like to mention an immediate simple corollary of this theorem. It is well known (and not difficult
to check) that $\ell((P_E K )^\circ)\leq  \sqrt{N} \, \rad(P_E K)$. We also use the following notation:
given a convex body $K$ containing the origin in its interior and a subspace $E\subset \RNN$, we define
$$
  \delta_{E,K} := \left\|  P_E :\, (\RNN, \|\cdot\|_K)\to (E, \|\cdot\|_K) \right\|
  =\inf\{\lambda >0\, |\, P_E K \subset \lambda K\cap E\}.
$$
In particular,
for all $x \in E$, one has $\|x\|_K \le \delta_{E,K} \|x \|_{P_E K}$.

\begin{corollary}
	\label{cor:detran}
There exist universal constants $C\ge 2$ and $0< \gamma  < 1$ such that for every
$0 < \eps \le 1/(2C)$ the following holds. Let $1 \le k \le n$ and $E$ be a $k$-codimensional
subspace minimizing $\rad (K\cap E)$, i.e., $c_{k+1}(K)= \rad (K\cap E)$.
Then
\[
{\cal R}_n(K) \le
  \left( \frac{12 \sqrt{\log(1/\eps)}}{\eps} \, \frac{ \sqrt{N  \, n}}{n-k+1}\right)  \delta_{E,K}\,  c_{k+1}(K)
\]	
with probability at least
\[
1 - 3 (C \eps)^{n-k+1} - k e^{- \gamma n}.
\]
In particular, with $\mu = n/k$, for some absolute constant $C_0$ we have
\[
cr_{n +1} (K) \le   \frac{C_0\, \mu }{\mu-1}\,
  \sqrt{\frac{N}{n}}\,   \delta_{E,K}\,  c_{k+1}(K).
\]	
\end{corollary}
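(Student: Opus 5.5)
The plan is to apply Theorem~\ref{MainThm} to the $k$-codimensional subspace $E$ that realizes $c_{k+1}(K)$. The two terms of that bound are then controlled by $\rad(P_E K)$, and $\rad(P_E K)$ is in turn controlled by $\delta_{E,K}\, c_{k+1}(K)$.

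\textbf{Comparing $\rad(P_E K)$ with $c_{k+1}(K)$.} By the definition of $\delta_{E,K}$ we have $P_E K \subset \delta_{E,K} (K\cap E)$. Hence
\[
\rad(P_E K)\le \delta_{E,K}\,\rad(K\cap E)=\delta_{E,K}\, c_{k+1}(K).
\]

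\textbf{Bounding the $\ell_*$ term.} We use the fact quoted before the corollary, $\ell((P_E K)^\circ)\le \sqrt N\,\rad(P_E K)$. To justify it, note that $\|\cdot\|_{(P_EK)^\circ}=h_{P_EK}\le \rad(P_E K)\,|\cdot|$ and that $\E|G|\le\sqrt N$. Substituting into Theorem~\ref{MainThm} gives, with the stated probability,
\[
{\cal R}_n(K)\le \left(\frac{6\sqrt{\log(1/\eps)}}{\eps}\sqrt{\frac{2n}{n-k+1}}+\frac{2\sqrt{nN}}{\eps(n-k+1)}\right)\delta_{E,K}\,c_{k+1}(K).
\]
Write $m=n-k+1\le n\le N$. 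The first coefficient satisfies $\sqrt{2n/m}=\sqrt{2nm}/m\le \sqrt2\sqrt{nN}/m$. Also $\sqrt{\log(1/\eps)}\ge \sqrt{\log 4}>1$, since $\eps\le 1/4$ when $C\ge2$. So the total coefficient is at most
\[
(6\sqrt2+2)\,\frac{\sqrt{\log(1/\eps)}}{\eps}\cdot\frac{\sqrt{Nn}}{m}\le \frac{12\sqrt{\log(1/\eps)}}{\eps}\cdot\frac{\sqrt{Nn}}{m},
\]
because $6\sqrt2+2\approx10.49$. This gives the first claim.

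\textbf{The random Gelfand number bound.} Fix $\eps=1/(2C)$, or a smaller absolute constant. Put $\mu=n/k>1$, so that $m\ge n-k=n(\mu-1)/\mu$. The failure probability is at most $3(C\eps)^{m}+k e^{-\gamma n}$. The first term is at most $3\cdot 2^{-m}$ and the second at most $n e^{-\gamma n}$. The quantile defining $cr_{n+1}$ (as in \cite{LT2000}) requires this to fall below a fixed threshold. That holds once $m$ and $n$ exceed absolute constants, possibly after shrinking $\eps$.

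The factor then becomes
\[
\frac{\sqrt{Nn}}{m}\le \frac{\mu}{\mu-1}\sqrt{\frac{N}{n}},
\]
which yields the claim with an absolute constant $C_0$. The small cases, where $m$ or $n$ is bounded, are handled by enlarging $C_0$. The trivial bound ${\cal R}_n(K)\le \rad(K)$ is compared with $\sqrt{N/n}\,\delta_{E,K}\,c_{k+1}(K)$ in those regimes.

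\textbf{Main obstacle.} The only delicate point is this last step, matching the probability to the exact quantile level used in the definition of $cr_{n+1}$ uniformly in all parameters. I would try first to choose $\eps$ depending on the quantile level and absorb the resulting $\sqrt{\log(1/\eps)}/\eps$ into $C_0$.
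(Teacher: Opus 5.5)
Your proof of the main inequality is the argument the paper has in mind. The paper only records $\ell((P_EK)^\circ)\le\sqrt N\,\rad(P_EK)$ and the inclusion $P_EK\subset\delta_{E,K}(K\cap E)$, and you apply Theorem~\ref{MainThm} to the optimal $E$ exactly as intended. Your constant bookkeeping $6\sqrt2+2\le 12$, using $m\le N$ and $\log(1/\eps)\ge\log 4>1$, is correct. The reduction $\sqrt{Nn}/(n-k+1)\le\frac{\mu}{\mu-1}\sqrt{N/n}$ with a fixed small $\eps$ is also how the ``in particular'' part follows, and shrinking $\eps$ does dispose of bounded $m$.

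The step that fails is your treatment of bounded $n$. For $n$ below an absolute threshold, the term $ke^{-\gamma n}$ inherited from the Rudelson--Vershynin estimate need not be small, so Theorem~\ref{MainThm} gives nothing there. You propose to fall back on ${\cal R}_n(K)\le\rad(K)$, but $\rad(K)$ is not controlled by $\sqrt{N/n}\,\delta_{E,K}\,c_{k+1}(K)$. Take the ellipsoid with semi-axes $A,1,\dots,1$, with $k=1$ and $E=e_1^\perp$: then $c_2(K)=1$ and $\delta_{E,K}=1$, while $\rad(K)=A$ is arbitrary, even with $n=2$ fixed.

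A correct patch is to rerun the proof of Theorem~\ref{MainThm} for $k<n\le n_0$ with a different input on $A$.
\begin{itemize}
\item There are only finitely many such pairs $(k,n)$, and $s_k(\Gamma_{kn})>0$ almost surely. So there is an absolute $t_0>0$ with $\P(s_k(\Gamma_{kn})<t_0)$ as small as you like for all of them.
\item On that event, $\|A\|\le 1/t_0$ and $\|A\|_{HS}\le\sqrt{n_0}/t_0$.
\item The Chevet--Gordon and concentration step, together with $\ell_*(P_EK)\le\sqrt N\,\rad(P_EK)$, then gives ${\cal R}_n(K)\le C\sqrt N\,\rad(P_EK)\le C\sqrt{n_0}\,\sqrt{N/n}\,\delta_{E,K}\,c_{k+1}(K)$, with probability bounded below by an absolute constant that can be matched to the quantile level.
\end{itemize}
Alternatively, use a Gaussian smallest-singular-value bound without the additive $e^{-\gamma n}$ term. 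The paper states the ``in particular'' part as immediate and does not discuss this regime. The issue therefore concerns only the justification you added, not your overall method.
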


The ``in particular'' part of this corollary should be compared with (\ref{detimran-1}) and (\ref{detimran-2}).
In the case when $\delta_{E,K}$ is bounded (which e.g. is the case for $p$-ellipsoids with $p\geq 2$),
this bound improves the previous estimates.

\medskip

We now turn to lower bounds.

\begin{theorem}
	\label{LowerThm}
Let $1\le n < N$ and  $K \subset \RNN$ be a  convex body containing the origin in its interior.
Then for every subspace $F$ of dimension $n+1$ and for every $t > 0$,
\[
{\cal R}_n(K) \ge \frac{\sqrt{n}}{2 \left(\ell({K \cap F}) + t \ \rad((K \cap F)^\circ)\right)}
\]
with probability at least
\[
1 - e^{-n/8} - e^{-t^2/2}.
\]
\end{theorem}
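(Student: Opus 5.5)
The plan is to exhibit, inside the random section, a single vector of $K\cap F$ with large Euclidean norm. We realize the random $n$-codimensional subspace as $E_n=\ker G$, where $G:\RNN\to\RR^n$ is a standard Gaussian matrix; by rotation invariance $\ker G$ is Haar distributed. Since $\dim F=n+1>n$, the subspace $E_n\cap F=\ker(G|_F)$ is nontrivial. Fix an orthonormal basis of $F$ and identify $F$ with $\RR^{n+1}$. Then $G|_F$ is an $n\times(n+1)$ standard Gaussian matrix, and its kernel is almost surely a line spanned by a random unit vector $\theta$ uniformly distributed on $S^{n}\subset F$. Any point of that line lying in $K$ is a lower witness for ${\cal R}_n(K)$, so
\[
{\cal R}_n(K)\ \ge\ \frac{1}{\|\theta\|_{K\cap F}}.
\]
(Since $K$ need not be symmetric, we may use $\pm\theta$ and keep the better sign, but the one-sided bound already suffices.)

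The problem is therefore reduced to an upper bound on $\|\theta\|_{K\cap F}$ for $\theta$ uniform on the sphere of $F$. Write $\theta=g/|g|$, where $g$ is a standard Gaussian vector in $F$; this is legitimate because the uniform direction is independent of the radial part. Then $\|\theta\|_{K\cap F}=\|g\|_{K\cap F}/|g|$, and the two factors are handled separately.

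For the numerator, note that $\|\cdot\|_{K\cap F}$ is Lipschitz on $F$ with constant $\rad((K\cap F)^\circ)$, the inradius-type quantity of the section (equivalently $b_{K\cap F}$). The Gaussian concentration inequality~(\ref{ineq:concentration}), one-sided, then gives
\[
\|g\|_{K\cap F}\le \ell(K\cap F)+t\,\rad((K\cap F)^\circ)
\]
with probability at least $1-e^{-t^2/2}$. For the denominator, $|g|$ is a $\chi$ variable with $n+1$ degrees of freedom. Lipschitz concentration of $|g|$ around $\E|g|\ge\sqrt{n+1}\cdot c$, or a direct $\chi^2$ lower tail bound, should yield $|g|\ge\sqrt n/2$ with probability at least $1-e^{-n/8}$. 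Concretely, $\E|g|\ge\sqrt{n}$ already holds for $n+1$ coordinates, since $\E|g|\ge\sqrt{n+1}\cdot\sqrt{(n+1)/(n+2)}\ge\sqrt n$. Applying concentration with deviation $\sqrt n/2$ gives failure probability $e^{-n/8}$, which is exactly the stated term.

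A union bound over the two events then gives $\|\theta\|_{K\cap F}\le 2(\ell(K\cap F)+t\,\rad((K\cap F)^\circ))/\sqrt n$ with probability at least $1-e^{-n/8}-e^{-t^2/2}$, and the claim follows. The steps needing the most care are the first and the last. In the first, one must make sure the kernel direction restricted to $F$ is genuinely uniform on the sphere of $F$. This is true by invariance of $G|_F$ under rotations of $F$, and since the bound is deterministic given the kernel, no independence between $F$ and $E_n$ beyond the Haar measure is needed. In the last, the constant bookkeeping must deliver exactly $e^{-n/8}$ and the factor $2$, which relies on the estimate $\E|g|\ge\sqrt n$.
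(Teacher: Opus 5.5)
Your proof is correct and is essentially the paper's argument: both exhibit a direction of $F$ lying in the random section and distributed as $G_F/|G_F|$, then apply Gaussian concentration to $\|G_F\|_{K\cap F}$ (Lipschitz constant $\rad((K\cap F)^\circ)$) and to $|G_F|$ (using $\E|G_F|\ge\sqrt n$ and deviation $\sqrt n/2$), with the same constants. The only difference is how the uniformly distributed line $E_n\cap F$ is produced. You use the kernel model and rotation invariance of $\Gamma_n|_F$, with the sign of $\theta$ harmless since $\pm\theta$ both lie in the section. The paper instead works in the image model $\im G$ and uses the explicit vector $\bar y=P_{E^\perp}G_1$, for which $G\bar y\sim h_1(h_1,\dots,h_{n+1})$; your route is slightly cleaner.
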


As above, we relate this bound to the Dvoretzky theorem. Note that, for any convex body
$L\subset \Rm$ that contains the origin in its interior,  $b_L$  equals the radius $R(L^\circ)$, and that $b_L\leq \ell({L})\leq \sqrt{m}\, b_L$. Therefore, by choosing
$t= s \, \ell({K \cap F})/ b_{K \cap F}$ in Theorem~\ref{LowerThm}, we obtain the following corollary.

\begin{corollary}
	\label{cor:low-dvor}
Let $1\le n < N$ and  $K \subset \RNN$ be a  convex body containing the origin in its interior.
Then for every subspace $F$ of dimension $n+1$ and for every $s \geq 1$,
\[
{\cal R}_n(K) \ge \frac{\sqrt{n}}{4 \, s \, \ell({K \cap F})}
\]
with probability at least
\[
1 - e^{-n/8} - \exp{ \left( - s^2 \ell({K \cap F})^2/(2\,b_{K \cap F}^2) \right)}.
\]
\end{corollary}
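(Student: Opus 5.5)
The plan is to obtain the corollary directly from Theorem~\ref{LowerThm}, choosing $t$ proportional to $\ell(K\cap F)/b_{K\cap F}$ as indicated in the text before the statement. Throughout, write $L = K\cap F$, viewed as a convex body in the $(n+1)$-dimensional space $F$ that contains the origin in its interior. The polar is taken inside $F$, so $\rad(L^\circ)=b_L$: the support function of $L^\circ$ is the gauge of $L$, and its maximum over the unit sphere of $F$ is the Lipschitz constant of $\|\cdot\|_L$.

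First, set $t = s\,\ell(L)/b_L$. Since $\rad(L^\circ)=b_L$, the denominator in Theorem~\ref{LowerThm} becomes
\[
\ell(L) + t\,\rad(L^\circ) = \ell(L) + s\,\ell(L) = (1+s)\,\ell(L) \le 2s\,\ell(L),
\]
where the last inequality uses $s\ge 1$. Theorem~\ref{LowerThm} then gives
\[
{\cal R}_n(K)\ge \frac{\sqrt n}{2(1+s)\ell(L)} \ge \frac{\sqrt n}{4s\,\ell(L)}.
\]
This holds with probability at least $1-e^{-n/8}-e^{-t^2/2}$, and $e^{-t^2/2}=\exp\!\left(-s^2\ell(L)^2/(2b_L^2)\right)$, which is exactly the stated bound.

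Second, I need to check that this choice of $t$ is admissible, i.e.\ $t>0$. Here $\ell(L)>0$ because $L$ is bounded, and $b_L<\infty$ because $L$ contains a ball around the origin in $F$. The inequalities $b_L\le \ell(L)\le\sqrt{m}\,b_L$ recalled before the statement are not needed for the bound itself. They only show that the exponent $\ell(L)^2/b_L^2$ lies between $1$ and $n+1$, so the probability estimate is meaningful.

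No step is a real obstacle. The one point needing care is the identification $\rad((K\cap F)^\circ)=b_{K\cap F}$: the polar and the inradius must both be computed within $F$, consistently with how Theorem~\ref{LowerThm} is stated.
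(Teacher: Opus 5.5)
Your proposal is correct and is exactly the paper's argument: take $t = s\,\ell(K\cap F)/b_{K\cap F}$ in Theorem~\ref{LowerThm}, use $\rad((K\cap F)^\circ)=b_{K\cap F}$ (polar taken within $F$), and bound $1+s\le 2s$. Your side remark that the exponent $\ell(L)^2/b_L^2$ lies between $1$ and $n+1$ holds only up to an absolute constant; for instance, in the symmetric case one only has $\ell(L)\ge\sqrt{2/\pi}\,b_L$. As you note, this plays no role in the proof.
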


\begin{remark}
 As mentioned above, the parameter $M_{K \cap F}$ is the average of the norm $\|\cdot\|_K$
 over the unit Euclidean sphere in $F$  (recall that the dimension of $F$ is $n+1$). Therefore
  $1/M_{K \cap F}\approx \sqrt{n}/ \ell({K \cap F})$, which is precisely the order of the radius provided by
 Dvoretzky's theorem when applied to $K \cap F$ (see (\ref{DvorEstimate})). Consequently, Corollary~\ref{cor:low-dvor} shows that
for a random $n$-codimensional section of $K$, the diameter  cannot decrease below the radius of the Euclidean ball furnished by
Dvoretzky's Theorem for the $(n+1)$-dimensional convex body $K \cap F$. It is worth emphasizing  that this conclusion  holds for every
$(n+1)$-dimensional  subspace $F$. Moreover, the ``Dvoretzky radius" of $K \cap F$ may be substantially  larger than
 that of $K$ itself, as illustrated by  an ellipsoid with $N-n-1$ short principal axes.
\end{remark}

Next we provide a lower bound of a different type, in which we will use the  notation $\delta_{E,K}$ introduced
before Corollary~\ref{cor:detran}. Observe that, in contrast with Theorem \ref{MainThm} and Corollary ~\ref{cor:detran}, in the next theorem we do not need the additional assumption that $k\leq n$.

\begin{theorem}
	\label{LowerGeneralThm}
Let $1 \le n < N$ and $K \subset \RR^N$ be a convex body containing the origin in its interior.
Assume that $1 \le k < N$ is such that there exists  a $k$-codimensional subspace $E$ satisfying the following hypothesis:
	\begin{equation}
		\label{assumption:key}
		\ell_*(P_E K) \ge 6 \sqrt{n}\, R(P_E K).
	\end{equation}
Then, with probability at least  $1 - 2e^{- 2 n}$, one has
\[
    {\cal R}_n(K) \ge
    \left( \frac{1}{R(K \cap E^\perp)} +  \frac{6 \, \delta_{E,K} \, \sqrt{n}}{\ell_*(P_E K)} \right)^{-1}
    \ge \frac{1}{2} \min \left\{ R(K \cap E^\perp) ,\,  \frac{\ell_*(P_E K)}{6 \, \delta_{E,K} \, \sqrt{n}} \right\}.
\]
\end{theorem}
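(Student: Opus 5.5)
We realize the random $n$-codimensional subspace as $E_n=\ker G$, with $G:\RNN\to\RR^n$ a standard Gaussian matrix. The plan is to exhibit, with high probability, a point $y\in K\cap\ker G$ of large Euclidean norm, of the form $y=x+z$. Here $x\in E^\perp$ is a fixed direction at which $K\cap E^\perp$ reaches its radius, and $z\in E$ is a correction chosen so that $Gz=-Gx$. Concretely, fix $x_0\in K\cap E^\perp$ with $|x_0|=R(K\cap E^\perp)=:R_1$, and look for $y=\lambda x_0+z$ with $z\in E$, $Gz=-\lambda Gx_0$ and $\|y\|_K\le 1$. Since $z\perp x_0$, we get $|y|\ge \lambda R_1$. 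So everything reduces to finding such a $z$ whose $K$-norm is controlled by a small multiple of $\lambda$.

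\textbf{Step 1 (solving $Gz=w$ inside $E$ with small norm).} Restrict $G$ to $E$; this is a Gaussian operator $G_E:E\to\RR^n$. We need every $w\in\RR^n$ to have a preimage $z\in E$ with $\|z\|_{P_EK}\le |w|\cdot 6\sqrt n/\ell_*(P_EK)$, up to constants. By duality, this is equivalent to a lower bound on the support function,
\[
h_{P_EK}(G_E^* u)=\sup_{v\in P_EK}\langle Gv,u\rangle\ \ge\ c\,\ell_*(P_EK)\quad\text{for all } u\in S^{n-1}.
\]
Indeed, $G_E(\rho P_EK)\supset |w|\,B_2^n$ holds once $\rho\, h_{P_EK}(G_E^*u)\ge |w|$ for all $u$. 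To obtain this uniform lower bound on the sphere, I would use Gordon's min-max (escape) inequality. The relevant Gaussian process is $\langle Gv,u\rangle$ over $(v,u)\in P_EK\times S^{n-1}$, compared with $\langle g,v\rangle|u|+\langle h,u\rangle|v|$. This gives
\[
\E\min_u\max_v\ \ge\ \ell_*(P_EK)-\sqrt n\,R(P_EK).
\]
By hypothesis \eqref{assumption:key} the right-hand side is at least $\frac56\ell_*(P_EK)$. The functional is $R(P_EK)$-Lipschitz in $G$, so Gaussian concentration gives a deviation of $\ell_*/3$ with probability at least $1-e^{-2n}$; the factor $6$ is designed precisely for this. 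Having $z\in E$ with controlled $\|z\|_{P_EK}$, we pass to $\|z\|_K\le\delta_{E,K}\|z\|_{P_EK}$.

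\textbf{Step 2 (size of $Gx_0$).} $Gx_0$ is a Gaussian vector in $\RR^n$ with covariance $|x_0|^2\Id$. Hence $|Gx_0|\le 2\sqrt n\,R_1$, say, with probability at least $1-e^{-2n}$, by concentration of $|g|$. Combining the two steps, we get $z$ with $\|z\|_K\le \lambda R_1\cdot 6\delta_{E,K}\sqrt n/\ell_*(P_EK)$, after adjusting constants so that the final numeral is $6$. Then
\[
\|y\|_K\le \lambda\bigl(1/R_1\cdot R_1\cdot\|x_0\|_K\bigr)+\|z\|_K\le \lambda\Bigl(1+\frac{6\delta_{E,K}\sqrt n\,R_1}{\ell_*(P_EK)}\Bigr),
\]
using $\|x_0\|_K\le1$. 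Choosing $\lambda$ to make this equal to $1$ gives
\[
|y|\ge \lambda R_1=\Bigl(\frac1{R_1}+\frac{6\delta_{E,K}\sqrt n}{\ell_*(P_EK)}\Bigr)^{-1},
\]
and $a^{-1}+b^{-1}\le 2/\min$ gives the second inequality. The union bound gives probability $1-2e^{-2n}$.

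The main obstacle is Step 1: getting the uniform lower bound on $\min_{u}h_{P_EK}(G_E^*u)$ with exact constants matching $6$ and $e^{-2n}$. This is also where the non-symmetric case needs care, since Gordon's inequality handles support functions of non-symmetric sets directly. Balancing the constant in Step 2 against the one in Step 1 (for example, allotting deviation $\sqrt{4n}$ in each) is routine but must be done consistently.
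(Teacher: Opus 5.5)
Your proposal is correct and is essentially the paper's proof. Gordon's min--max bound plus Gaussian concentration (a deviation of $2\sqrt{n}\,R(P_EK)$, absorbed by the hypothesis) gives $\frac{1}{2}\ell_*(P_EK)\,B_2^n\subset G_E(P_EK)$; you then correct the extremal point of $K\cap E^\perp$ by a preimage lying in $E$, and pay $\delta_{E,K}$ to pass from $\|\cdot\|_{P_EK}$ to $\|\cdot\|_K$. There are two small fixes.

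\begin{itemize}
\item Concentration gives $|Gx_0|\le 3\sqrt{n}\,R_1$, not $2\sqrt{n}\,R_1$, with probability $1-e^{-2n}$. This factor $3$, times the factor $2$ from the inclusion, is exactly where the constant $6$ comes from.
\item In the Gordon step, $v\in P_EK$ does not lie on a sphere. The within-block increments of $\langle g,v\rangle+\langle h,u\rangle|v|$ then exceed those of $\langle Gv,u\rangle$, so the comparison goes the wrong way. Use $\langle g,v\rangle+R(P_EK)\langle h,u\rangle$ instead, or simply quote the general Gordon bound $\E\min_u\max_v\langle Gv,u\rangle\ge \ell_*(P_EK)-\sqrt{n}\,R(P_EK)$, as the paper does.
\end{itemize}
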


\begin{remark} \label{lower-nat}
We would like to emphasize that our two lower bounds, Theorem~\ref{LowerThm} and Theorem~\ref{LowerGeneralThm}, are of a different nature: the former one works with an $(n+1)$-dimensional subspace (usually applied with a ``large" part of the body, as will also be illustrated by the application to ellipsoids in Section~\ref{sect-p-ell}; see Remark~\ref{ill-ell}), while the latter one uses a $k$-codimensional subspace (usually applied with a ``small" part of the body). The concrete examples of $p$-ellipsoids considered in the sequel will confirm that either of the two bounds could be sharp, and matching the corresponding upper bound, in different cases.
\end{remark}

In Section~\ref{sect-p-ell} we apply our results to derive
estimates for the radii of random sections of $p$-ellipsoids, a problem that was studied in particular in \cite{HKNPU, HPS}.
Our bounds are expressed in terms of the lengths $\{\sigma_i\}_{i=1}^N$ of the semi-axes of the
$p$-ellipsoid ${\cal E}_p$ (see \eqref{def:Ep} for the definition).

\smallskip

We conclude by  mentioning another consequence of our results.
Letting the  dimension $N$ of the ambient space tend to infinity, as is often required in Approximation Theory, we obtain the following
 general dichotomy, which provides a satisfactory characterization of when Gaussian measurements may help
  in  IBC theory.
While the corresponding upper bound (established in \cite{HPS}) follows from the so-called ``low $M^*$-estimate," our results yield a sharp lower bound, and resolves the general open problem posed in \cite{HPS}. Recall that for  $p \in (1,  \infty)$, the conjugate exponent  $p'$ is defined
	by $1/p + 1/p' = 1$.

\begin{theorem}
	\label{cor:dichotomy}
 Let $p \in (1,  \infty]$ and $\{\sigma_j\}_{j=1}^\infty$ be a sequence of  non-increasing non-negative numbers.
 Then the following dichotomy holds.
	\begin{enumerate}
		\item If $ \sum_j \sigma_j^{p'}$ converges, then there exists a sequence $\{\varepsilon_n\}_{n =1}^\infty$ such that
		$\lim_{n \to \infty} \varepsilon_n = 0$ and
		\[
		\mathit{for \ all \ } N \in \N, \ \mathit{for \ all \ }  1 \le n < N, \quad \mathit{one \ has } \quad \sqrt{n} \, {\cal R}_n({\cal E}_p) \le \varepsilon_n.
		\]
		In this case, the optimal information and the Gaussian measurements are not very different from the point of
view of IBC theory.
		\item If $\sum_j \sigma_j^{p'}$ diverges, then there exists a function $f$ with
		 $\lim_{N \to \infty} f(N) =  \infty$ such that
		\[
		\mathit{for \ all \ } N \in \N, \ \mathit{for \ all \ } 1 \le n \le f(N),  \quad\quad \mathit{one \ has } \quad \P \left( {\cal R}_n({\cal E}_p) \ge  \sigma_{1} / 2 \right) \ge 1 - 2e^{-2n}.
		\]
		In this case,  Gaussian measurements cannot help at all in IBC Theory.
	\end{enumerate}
\end{theorem}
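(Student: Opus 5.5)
For both parts we work with ${\cal E}_p = D(B_p^N)$, where $D=\mathrm{diag}(\sigma_1,\dots,\sigma_N)$. The polar of a projection is a section of the polar, and the polar of ${\cal E}_p$ is $D^{-1}B_{p'}^N$. Hence for a coordinate subspace $E$ spanned by $\{e_j\}_{j\in J}$ we have $P_E{\cal E}_p=D_J B_p^J$ and $\ell_*(P_E{\cal E}_p)=\E\big(\sum_{j\in J}\sigma_j^{p'}|g_j|^{p'}\big)^{1/p'}$. Also $\delta_{E,{\cal E}_p}=1$, because coordinate projections are contractions on $\ell_p$. For $p=\infty$ we read $p'=1$, and the same formulas hold.

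\textbf{Part 1} (convergent case). We use Theorem~\ref{MainThm}; the low $M^*$-estimate of Remark~\ref{M*} is an alternative route, but we stay with Theorem~\ref{MainThm}. Take $E=\mathrm{span}\{e_j: j>k\}$ with $k=\lfloor n/2\rfloor$, and fix $\eps$ small. Then $R(P_E{\cal E}_p)=\sigma_{k+1}$. For the Gaussian term,
\[
\ell_*(P_E{\cal E}_p)\le \Big(\sum_{j>k}\sigma_j^{p'}\Big)^{1/p'}(\E|g|^{p'})^{1/p'}\le C_{p}\,\tau_k ,\qquad \tau_k:=\Big(\sum_{j>k}\sigma_j^{p'}\Big)^{1/p'}\to 0 .
\]
Monotonicity gives $\sigma_{k+1}\le (k^{-1}\sum_{j\le k+1}\sigma_j^{p'})^{1/p'}$. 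The more useful bound is $\sigma_{k+1}^{p'}\cdot k/2\le \sum_{k/2<j\le k+1}\sigma_j^{p'}\le \tau_{k/2}^{p'}$, so $\sqrt n\,\sigma_{k+1}\lesssim \sqrt n\, k^{-1/p'}\tau_{k/2}$. This requires $p'\le 2$, i.e.\ $p\ge2$. For $p<2$ we instead use $\sigma_{k+1}\le \tau_{k/2}(k/2)^{-1/p'}$ together with the fact that summability forces $\sigma_j=o(j^{-1/p'})$, and we choose $k$ comparable to $n$ so that $\sqrt n\,\sigma_{k+1}$ is also small. Alternatively, since ${\cal R}_n({\cal E}_p)\le {\cal R}_n({\cal E}_2)$ when $p\le2$ (the ball $B_p\subset B_2$), the case $p<2$ reduces to $p=2$, where $\sum\sigma_j^2<\infty$ is implied by $\sum\sigma_j^{p'}<\infty$.

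Combining the two terms, Theorem~\ref{MainThm} gives $\sqrt n\,{\cal R}_n\le C(\sqrt n\,\sigma_{k+1}+\tau_k)=:\eps_n\to0$, uniformly in $N$. The statement is deterministic in form, so we read it as holding with probability tending to one, or we define $\eps_n$ via the quantile $cr$; I would check the intended meaning against how the paper uses it. \textbf{This is the main obstacle}: controlling $\sqrt n\,\sigma_{k}$ versus the tail $\tau$ when $p<2$, and handling the probability statement.

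\textbf{Part 2} (divergent case). We use Theorem~\ref{LowerGeneralThm} with $E=\mathrm{span}\{e_2,\dots,e_N\}$, so $k=1$ and $E^\perp=\mathrm{span}\{e_1\}$. Then $R({\cal E}_p\cap E^\perp)=\sigma_1$, $R(P_E{\cal E}_p)=\sigma_2\le\sigma_1$ and $\delta=1$. Set
\[
L(N):=\ell_*(P_E{\cal E}_p)\ \ge\ c_p\Big(\sum_{2\le j\le N}\sigma_j^{p'}\Big)^{1/p'} ,
\]
where the lower bound follows from Jensen for $p'\ge1$ applied to $\E\|(\sigma_jg_j)\|_{p'}\ge \|(\sigma_j\E|g_j|)\|_{p'}$. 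By divergence, $L(N)\to\infty$. The conclusion of Theorem~\ref{LowerGeneralThm} is
\[
{\cal R}_n\ge \big(1/\sigma_1+6\sqrt n/L(N)\big)^{-1},
\]
and this is at least $\sigma_1/2$ as soon as $6\sqrt n\,\sigma_1\le L(N)$. The same inequality yields hypothesis \eqref{assumption:key}, since $\sigma_2\le\sigma_1$. So we define $f(N)=(L(N)/(6\sigma_1))^2\to\infty$, and the theorem gives probability at least $1-2e^{-2n}$. If $\sigma_1=0$ the claim is trivial.
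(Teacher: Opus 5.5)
Your Part~2 is correct for $1<p\le 2$ and matches the paper's argument: Corollary~\ref{LowerEllpThm} with $k=1$ is Theorem~\ref{LowerGeneralThm} applied to your $E$. Reading Part~1 as a high-probability statement is also what the paper's proof delivers (probability at least $1-e^{-\gamma n}$). There are, however, two genuine gaps.

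\textbf{Part 1 fails for $1<p<2$.} Here $p'>2$, and with your coordinate $E$ you have $R(P_E{\cal E}_p)=\sigma_{k+1}$. So the first term in Theorem~\ref{MainThm} is at least $\sigma_{k+1}\ge\sigma_{n+1}$, and $\sqrt n\,\sigma_{n+1}$ need not tend to $0$. For example, take $p=4/3$ and $\sigma_j=j^{-1/3}$: then $\sum_j\sigma_j^{4}<\infty$ but $\sqrt n\,\sigma_{n+1}\approx n^{1/6}$. The estimate $\sigma_j=o(j^{-1/p'})$ cannot rescue this, since $1/p'<1/2$.

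Your reduction to $p=2$ rests on a reversed implication. For $p'>2$, $\sum_j\sigma_j^{p'}<\infty$ does not imply $\sum_j\sigma_j^2<\infty$. In the same example ${\cal R}_n({\cal E}_2)\ge\sigma_{n+1}$ deterministically, so $\sqrt n\,{\cal R}_n({\cal E}_2)\to\infty$.

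The missing ingredient is the tool you set aside. The paper proves Part~1 for every $p>1$ from the low $M^*$-estimate in the form of Theorem~B: ${\cal R}_n({\cal E}_p)\le\widetilde C\sqrt{p'/n}\,\big(\sum_{j>\tilde c n}\sigma_j^{p'}\big)^{1/p'}$ with probability at least $1-e^{-\gamma n}$. This gives the claim at once with $\eps_n=\widetilde C\sqrt{p'}\,\big(\sum_{j>\tilde c n}\sigma_j^{p'}\big)^{1/p'}$. As the paper remarks, for $p<2$ Theorem~\ref{MainThm} and Theorem~B are incomparable, and for polynomial decay Theorem~B is the stronger one.

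\textbf{Wrong radius for $p>2$.} You use $R(P_E{\cal E}_p)=\max_{j\in J}\sigma_j$ for all $p$. By Lemma~\ref{geom-p-ell} this holds only for $p\le 2$. For $2<p\le\infty$ one has $R(P_E{\cal E}_p)=\big(\sum_{j\in J}\sigma_j^{q}\big)^{1/q}$ with $q=2p/(p-2)$ (and $q=2$ if $p=\infty$), and this can grow with $N$.

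In Part~2 this invalidates your check of \eqref{assumption:key}, and your $f$ is genuinely too large. Take $p=\infty$, $\sigma_1=1$ and $\sigma_j=a$ for $j\ge 2$. Then $R(P_E{\cal E}_\infty)=a\sqrt{N-1}$ and $L(N)=\sqrt{2/\pi}\,a(N-1)$, so \eqref{assumption:key} requires $n\le (N-1)/(18\pi)$. Yet with $N-1=\lceil 18\pi/a^2\rceil$ your $f$ admits $n=N-1$. A random line then gives ${\cal R}_{N-1}=\min\{|G|/|g_1|,\, a|G|/\max_{j\ge 2}|g_j|\}\approx a\sqrt{N/(2\log N)}\approx\sqrt{9\pi/\log N}$, which is below $1/2$ with high probability once $a$ is small enough.

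The fix, as in the paper, is to set $f(N)=\big(L(N)/(6\max\{\sigma_1,R(P_E{\cal E}_p)\})\big)^2$ and show it still tends to infinity. Write $S_N=(\sum_{j=2}^N\sigma_j^{p'})^{1/p'}$. The bound $\sum_{j\ge 2}\sigma_j^q\le\sigma_1^{q-p'}\sum_{j\ge 2}\sigma_j^{p'}$ gives $R(P_E{\cal E}_p)\le\sigma_1^{p'/2}S_N^{1-p'/2}$. Hence $L(N)/R(P_E{\cal E}_p)\ge\sqrt{2/\pi}\,(S_N/\sigma_1)^{p'/2}\to\infty$.

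The same interpolation repairs your Part~1 for $p>2$: $R(P_E{\cal E}_p)\le\sigma_{k+1}^{p'/2}\tau_k^{1-p'/2}\le\sqrt{2/k}\,\tau_{k/2}$. So with $k\approx n/2$, your route through Theorem~\ref{MainThm} does work for $p\ge 2$, as a valid alternative to Theorem~B there, but not for $p<2$.
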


\begin{remark}
	In the case $p=1$, a similar dichotomy holds, however the discussion is based on the behavior of
	$\sup_{j \ge 1} \sigma_j \sqrt{\log(1+j)}$ instead of $\sum_j \sigma_j^{p'}$.
\end{remark}

\begin{remark}
Such a dichotomy already appeared in the  Approximation Theory literature in the  Hilbert space setting, that is, when $p=2$
(see e.g. \cite{HKNPU} and Chapter~26 in \cite{NW-3}).  Our approach yields more precise estimates.
\end{remark}

In  Section~\ref{sec:example}, we discuss several examples for which we obtain sharp estimates in all parameters $n$ and $N$.
When the sequence
$\sigma=\{\sigma_j\}_{j=1}^N$ has exponential decay, namely $\sigma_j = q^{-j}$ for a fixed $q\in (0,1)$, we prove in Proposition \ref{caseGeom}
that, with high probability, the random radius of an $n$-codimensional section of a
$p$-ellipsoid is  of  order  $\sqrt{n} \, \sigma_{n+1}$. This resolves a problem left open
by Giannopoulos and Milman in \cite{GianMil1997}, where it was shown that approaches based
on the low $M^*$-estimate are suboptimal (see also \cite{GiaMT2005} for lower bounds and
\cite[Corollary~9]{HKNPU} for an upper bound of  order $n^2 \, \sigma_{n+1}$).
Furthermore, when the sequence
$\sigma=\{\sigma_j\}_{j=1}^N$  exhibits polynomial decay, namely  $\sigma_j = j^{-\alpha} (\log(j+1))^{-\beta}$,
we obtain in Propositions~\ref{caseI}, \ref{caseIbis}, \ref{caseII}--\ref{caseIV} sharp estimates for the
random diameter ${\cal R}_n({\cal E}_p)$. This completely resolves  a question raised by Hinrichs,
Prochno and Sonnleitner \cite{HPS}. In that work,
only the case $\beta = 0$ was considered (see Fig.~1 and the Conjecture on page~7 in \cite{HPS}).

\smallskip

The paper is organized as follows. In
Section~\ref{sec:notations}, we introduce  the notation used throughout the paper.
 Section~\ref{sect-p-ell} is devoted to the special case of $p$-ellipsoids, where we present complete statements
 in terms of the lengths of the semi-axes. The proofs of the
 main theorems are given in  Section~\ref{sec:upper}, while the proofs of the corresponding results for
 $p$-ellipsoids are deferred to  Section~\ref{sec:ellipsoid}.
 Finally, in Section~\ref{sec:example}, we analyze several examples in detail   and prove Theorem~\ref{cor:dichotomy}.
%
%
%
%

\section{Notation and preliminaries}
\label{sec:notations}

Let $N$ be a fixed integer, $\RNN$ be the ambient space and $| \cdot |$ be the canonical Euclidean norm.
A compact convex set $K \subset \RNN$ containing the origin in its interior  will
 be called a  convex body. We denote by
 $$
   \|\cdot\|_K:=\inf \{\lambda >0 \,\, | \,\, x\in \lambda K\}
 $$
 the gauge of $K$ (then $K$ is the unit ball of $\|\cdot\|_K$), and by
 $K^\circ = \{y\,\, |\,\,  \forall x \in K, \langle x, y \rangle \le 1 \}$ the polar body  of $K$.
The radius of $K$ (which is one half of the diameter of $K$ whenever $K$ is symmetric) is denoted by
\[
R(K):=\max_{y \in K} |y|,
\]
meaning equivalently that $R(K)$ is the smallest number such that
$K  \subset R(K)  B_2^N$,
where $B_2^N$ is the Euclidean unit ball of the ambient space.
Clearly, $\mbox{diam}(K)/2\leq  R(K)\leq \mbox{diam}(K)$, where
$\mbox{diam}(K)$ denotes the diameter of $K$.
Given a subspace $E$ of $\RNN$, we denote the orthogonal projection onto $E$ by $P_E$.
When $E=\Rk$ we simply write $P_k$ (thus $P_k$ is the coordinate projection onto the first $k$ coordinates).
As is mentioned in the introduction, given a subspace $E\subset \RNN$,
by $\delta_{E,K}$ we denote the minimal number  such that $P_E K \subset \delta_{E,K} K\cap E$.

 Below $g_i$, $g_{ij}$,  $i\geq 1$, $j \ge 1$ always denote a family
of i.i.d. standard ${\cal N}(0,1)$ random variables. We often
consider a standard Gaussian vector in $\RR^m$, denoted by
$$G^{(m)} = (g_1, \ldots, g_m).$$

Given a convex body $K\subset \RNN$ containing the origin in its interior, we define
the parameters $\ell(K)$ and $\ell_*(K)$  by
\[
\ell(K) = \E \|G^{(N)}\|_K \quad \quad \mbox{ and } \quad \quad \ell_*(K)=\ell(K^\circ).
\]
More generally, given a non-empty set $T\subset \RNN$, we set
$$
  \ell_*(T) = \E \sup_{x\in T}\, \langle G^{(N)}, x \rangle.
$$
Note that  $\ell_*(T):=\ell(T_0^\circ)$,
where $T_0$ is the convex hull of $T$ and $\{0\}$, and that $\ell_*(T)$ is often called
the Gaussian complexity of $T$.  As we explained in the introduction, the following
parameter, called Dvoretzky's dimension, plays an important role in Asymptotic Geometric Analysis:
$$
  d_K =  \frac{1}{4} \left(\frac{\ell(K)}{b_K}\right)^2,
$$
where  we recall that $b_K= \max\limits_{x\in S^{N-1}} \|x\|_K = R(K^\circ)$.

We write $\Gn = (g_{ij})_{1 \le i \le n, 1 \le j \le N}$ and consider it as
the random Gaussian operator from $\RNN$ to $\Rn$.
 For every integer $1\le k \le N$ we denote  by $\Gkn : \Rk \to \Rn$ the restriction of $\Gn$ to $\Rk$.
Sometimes it will be convenient to identify $\Gkn$ with an $n\times N$ matrix whose last $N-k$ columns are identically zero (whereas the entries in its first $k$ columns are i.i.d. standard Gaussian random variables, as before).
Then we set  ${\Gamma_n^k} := \Gn - \Gkn$, that is,
\begin{equation}
	\label{def:Gamma-k}
	\Gamma_n^k
	=
	\left( 0_n, \ldots, 0 _n, G_{k+1}, \ldots, G_N
	\right),
\end{equation}
where $0_n$ denotes the zero vector in $\RR^n$ and
 for all $j >k$, $G_j = (g_{ij})_{1 \le i \le n}$ is a standard Gaussian vector in $\RR^n$.
It is well known and follows from the rotational invariance of the Gaussian measure that $\ker \Gn$ has the same distribution as a random $n$-codimensional subspace $E\subset \RNN$ with respect to the normalized Haar measure on the Grassmannian.

We will often apply  the following Gaussian concentration inequality, see e.g. \cite[Proposition~2.18]{Ledoux}.
Let $F : \RR^d \to \RR$ be an $L$-Lipschitz function with respect to the Euclidean norm in $\RR^d$.
 Then for every $t > 0$,
\begin{equation}
	\label{ineq:concentration}
	\max \Big\{ \P \left(F(G^{(d)}) - \E F(G^{(d)}) > t \right)  , \P \left(F(G^{(d)}) - \E F(G^{(d)}) < -  t \right) \Big\} \le e^{-t^2 /(2 L^2)}.
\end{equation}

For any $p \in [1, \infty]$, the conjugate $p'$ is defined by  $1/p + 1/p' = 1$ (with $1/\infty=0$).
We denote by $B_p^N$ the unit ball of the norm $\|\cdot\|_p$,  that is
\[
B_p^N = \left\{ x \in \RNN\, \,\, |\,\, \,   \|x\|_p = \left(\sum_{i=1}^{N} |x_i|^p \right)^{1/p} \le 1 \right\}
\]
(in the case $p=\infty$, $\|x\|_p=\|x\|_\infty=\max_{i\leq N} |x_i|$).
It is classical that $(B_p^N)^\circ = B_{p'}^N$.

Below we  use  $c, C, c_1, C_1, \ldots$ for universal constants that may differ
from line to line. If a constant depends on a particular parameter, this parameter
will be written as a subscript.

\section{The case of $p$-ellipsoids}
\label{sect-p-ell}

In this section we study  $p$-ellipsoids. Their random radii were studied in \cite{HKNPU, HPS}, where
several cases were left  open.
Let $1\leq p\leq \infty$. For a non-increasing non-negative sequence  $\sigma=\{\sigma_j\}_{j=1}^N$, we define
its associated $p$-ellipsoid  by
\begin{equation}
	\label{def:Ep}
	{\cal E}_{p, \sigma}= {\cal E}_p := \left\{ x \in \RNN\, \,\, \Big|\,\, \,  \sum_{i=1}^{N} \left(\frac{|x_i|}{\sigma_i}\right)^{p} \le 1 \right\}
\end{equation}
with corresponding adjustments for the case $p=\infty$, in which case we ask that $\max_{i\leq N} \frac{|x_i|}{\sigma_i}\leq 1$.
It is not difficult to check that
\begin{equation}
	\label{p-dual}
	\left({\cal E}_p\right)^\circ =
	\left\{ x \in \RNN\, \,\, \Big|\,\, \,  \sum_{i=1}^{N} \left(\sigma_i |x_i|\right)^{p'} \le 1 \right\}.
\end{equation}

%

\subsection{Geometric properties of $p$-ellipsoids}
\label{intro-p-ell}

We summarize the geometric properties of $p$-ellipsoids in the following lemma.
\begin{lemma}\label{geom-p-ell}
	Let $1\leq p \le\infty$ and $\sigma=\{\sigma_j\}_{j=1}^N$ be a sequence of non-negative numbers.
	Let $I\subset \{1, \ldots N\}$ be non-empty and $P_I$ denote the coordinate projection on $\RR^I$.
	Then
	\begin{equation*}
		 R\left( P_I {\cal E}_p\right) =
		\left\{
		\begin{array}{ll}
			\displaystyle \max_{i \in I} \sigma_i \qquad &  \textrm{when} \ p \le 2
			\\
			\displaystyle \left(\sum_{i\in I} \, \sigma_i^{\frac{2p}{p-2}} \right)^{1/2 - 1/p} \qquad &  \textrm{when} \ 2<p <\infty
			\\
			\displaystyle \left(\sum_{i\in I} \, \sigma_i^2 \right)^{1/2} \qquad &  \textrm{when} \ p =\infty
		\end{array}
		\right.\  .
	\end{equation*}
	Moreover for $p<\infty$,
	\begin{equation*}
		\sqrt{\frac{2}{\pi}}
		\left(\sum_{i\in I}\frac{1}{\sigma_i^{p}}\right)^{\frac{1}{p}}\leq
		\ell(P_I {\cal E}_p)\leq \max\left\{1, \sqrt{p-1}\right\}
		\left(\sum_{i\in I}\frac{1}{\sigma_i^{p}}\right)^{\frac{1}{p}}
	\end{equation*} 	
		and, in the case $p=\infty$,   writing $I=\{i_1, \ldots, i_\ell\}$ with $\sigma_{i_1} \le \ldots \le \sigma_{i_\ell}$,
	\[
	\ell(P_I {\cal E}_\infty)
	\approx  \max_{j\leq \ell } \frac{\sqrt{\log(2 j)}}{\sigma_{i_j}},
	\]
	where $\approx$ means inequalities in both directions up to absolute positive constants.
\end{lemma}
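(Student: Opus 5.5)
First I reduce everything to the coordinate subspace $\RR^I$. Orthogonal projection onto $\RR^I$ simply forgets the coordinates outside $I$. So $P_I{\cal E}_p$ is exactly the $p$-ellipsoid in $\RR^I$ with semi-axes $\{\sigma_i\}_{i\in I}$: dropping coordinates can only decrease $\sum(|x_i|/\sigma_i)^p$, and every point of the smaller ellipsoid is attained by padding with zeros. (If some $\sigma_i=0$, that coordinate is forced to vanish; I treat it by continuity or by removing it, where $1/\sigma_i=\infty$ is interpreted accordingly.) Hence it suffices to prove all claims for $I=\{1,\dots,N\}$.

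\textbf{Radius.} I compute $R({\cal E}_p)=\sup\{|x| : \sum (|x_i|/\sigma_i)^p\le 1\}$. Substitute $x_i=\sigma_i y_i$ with $y\in B_p^N$, so that
\[
R({\cal E}_p)^2=\sup_{y\in B_p^N}\sum \sigma_i^2 y_i^2=\sup_{u}\sum\sigma_i^2 u_i ,
\]
where $u_i=y_i^2$ and $\|u\|_{p/2}\le 1$.
\begin{itemize}
\item For $p\le 2$, the set $\{u\ge 0:\|u\|_{p/2}\le 1\}$ is contained in the $\ell_1$ simplex and contains its vertices. The supremum of the linear functional is therefore attained at a vertex and equals $\max\sigma_i^2$.
\item For $2<p<\infty$, Hölder duality between $\ell_{p/2}$ and $\ell_{r}$ with $r=p/(p-2)$ gives $R^2=(\sum\sigma_i^{2r})^{1/r}$. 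Since $1/r=1-2/p$, this is the stated formula.
\item For $p=\infty$, take $y_i=1$ for all $i$, which gives $(\sum\sigma_i^2)^{1/2}$.
\end{itemize}

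\textbf{The quantity $\ell$ for $p<\infty$.} Here $\|x\|_{{\cal E}_p}=(\sum|x_i/\sigma_i|^p)^{1/p}$, so
\[
\ell=\E\Big(\sum |g_i|^p\sigma_i^{-p}\Big)^{1/p}.
\]
\begin{itemize}
\item Lower bound: the map $v\mapsto(\sum|v_i|^p\sigma_i^{-p})^{1/p}$ is a norm, so Jensen gives $\ell\ge(\sum (\E|g_i|)^p\sigma_i^{-p})^{1/p}=\sqrt{2/\pi}\,(\sum\sigma_i^{-p})^{1/p}$. This is the triangle inequality in $L_1(\ell_p)$, or Minkowski applied to $x\mapsto \|x\|$ with $\E|g|=\sqrt{2/\pi}$.
\item Upper bound: by Jensen for the concave function $t\mapsto t^{1/p}$, $\ell\le(\sum\E|g|^p\sigma_i^{-p})^{1/p}=(\E|g|^p)^{1/p}(\sum\sigma_i^{-p})^{1/p}$. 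It remains to check that $(\E|g|^p)^{1/p}\le\max\{1,\sqrt{p-1}\}$. For $p\le2$ this follows from $(\E|g|^p)^{1/p}\le(\E g^2)^{1/2}=1$. For $p\ge2$ I use the standard Gaussian moment bound $\|g\|_p\le\sqrt{p-1}$. This follows from hypercontractivity, or can be checked directly from $\E|g|^p=2^{p/2}\Gamma((p+1)/2)/\sqrt\pi$ together with log-convexity of $\Gamma$. This moment estimate is the only slightly delicate point, and I expect it to be the main obstacle if one wants a self-contained argument.
\end{itemize}

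\textbf{The case $p=\infty$.} Now $\ell=\E\max_i |g_i|/\sigma_i$ with the $\sigma_i$ sorted increasingly, so that $a_j=1/\sigma_{i_j}$ is non-increasing.
\begin{itemize}
\item Lower bound: for each $j$, $\E\max_i a_i|g_i|\ge a_j\,\E\max_{m\le j}|g_m|\ge c\,a_j\sqrt{\log(2j)}$, using the classical lower bound for the maximum of $j$ i.i.d. Gaussians (and $\E|g|\ge c$ when $j=1$).
\item Upper bound: set $M=\max_j a_j\sqrt{\log(2j)}$, so that $a_j\le M/\sqrt{\log(2j)}$. A union bound gives
\[
\P\big(\max a_j|g_j|>tM\big)\le\sum_j \P\big(|g_j|>t\sqrt{\log 2j}\big)\le\sum_j 2(2j)^{-t^2/2},
\]
which is summable and small once $t\ge 3$. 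Integrating the tail then yields $\ell\le CM$.
\end{itemize}
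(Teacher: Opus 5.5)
Your proposal is correct and follows essentially the same route as the paper. For the radius you use H\"older's inequality with its equality case, phrased as $\ell_{p/2}$--$\ell_{p/(p-2)}$ duality after substituting $u_i=y_i^2$. For $\ell$ you use Jensen's inequality in both directions together with the Gaussian moment bound $(\E|g|^p)^{1/p}\le\max\{1,\sqrt{p-1}\}$, exactly as the paper does.

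The one difference is that you actually prove the $p=\infty$ equivalence, which the paper simply quotes as known. Your upper estimate uses a union bound and tail integration, and your lower estimate uses the classical $\sqrt{\log(2j)}$ lower bound for maxima of Gaussians.
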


\medskip
\noindent
{\bf Proof.}
First we verify the claims for the radii. The case $p\leq 2$ is trivial (note that in this case ${\cal E}_p\subset {\cal E}_2$).
Let $2<p<\infty$. By H\"older's inequality, applied with $q=p/2$ and
$q'=p/(p-2)$ to a vector $x=\{x_i\}_{i=1}^{N}$,
\begin{equation}\label{Hold-diam}
	\left(\sum_{i\in I} x_i^2 \right)^{1/2}=
    \left(\sum_{i\in I} \sigma_i^2\,\left( \frac{|x_i|}{\sigma_i}\right)^{2}  \right)^{1/2}
	\le
	\left(\sum_{i\in I} \sigma_i^{\frac{2p}{p-2}} \right)^{1/2 - 1/p}
	\left(\sum_{i\in I} \left(\frac{|x_i|}{\sigma_i}\right)^{p}  \right)^{1/p}.
\end{equation}
As the above inequality attains equality at $x$  with $x_i=\sigma_i^{\frac{p}{p-2}}$, we get the equality for the radius.
The case $p=\infty$ is similar.

Next we evaluate the $\ell$-functionals. Assume first that  $p < \infty$. It is known that $c_p:=(\E\, |g|^{p})^{\frac{1}{p}}\leq \max\left\{1, \sqrt{p-1}\right\}$,
therefore
\[
\ell(P_I {\cal E}_p) = \E \left(\sum_{i\in I}  \frac{|g_i|^{p}}{\sigma_i^{p}} \right)^{\frac{1}{p}}
\leq\left( \E \sum_{i\in I}  \frac{|g_i|^{p}}{\sigma_i^{p}} \right)^{\frac{1}{p}}
=(\E\, |g|^{p})^{\frac{1}{p}}  \left(\sum_{i\in I}  \frac{1}{\sigma_i^{p}} \right)^{\frac{1}{p}}
\le c_p
\left(\sum_{i\in I}\frac{1}{\sigma_i^{p}}\right)^{\frac{1}{p}}.
\]
To estimate from below we consider the sequence $w = \{ |g_j| /\sigma_j \}_{j\in I}$
and write
\[
\E \left(\sum_{i\in I}  \frac{|g_i|^{p}}{\sigma_i^{p}} \right)^{\frac{1}{p}}
 = \E \|w\|_{p} \ge \| \E \, w\|_{p}  =\left(  \sum_{i\in I}  \left(\frac{\E |g_i|}{\sigma_i}\right)^p\right)^{\frac{1}{p}} =
\E|g| \left(\sum_{i\in I}\frac{1}{\sigma_i^{p}}\right)^{\frac{1}{p}}= \sqrt{\frac{2}{\pi}} \left(\sum_{i\in I}\frac{1}{\sigma_i^{p}}\right)^{\frac{1}{p}}.
\]

 Now, in the case $p=\infty$, $\ell(P_I {\cal E}_\infty) = \E \sup_{i\in I} \frac{|g_i|}{\sigma_i }$,
 and the claimed equivalence is known.
\qed

\subsection{Random radius of $p$-ellipsoids}
\label{cor-rand-rad}

In this section we apply our main theorems to obtain bounds on random radii of
 $p$-ellipsoids defined in \eqref{def:Ep}. We claim and discuss estimates which will be proved in
 Section~\ref{sec:ellipsoid}.

\smallskip

We start with the upper bound, which follows from  Theorem~\ref{MainThm}.

\begin{corollary}
	\label{cor:lp-ellipsoid}
There exist universal constants $C, C_1\ge 2$, and $0< \gamma  < 1$ such that  the following holds.
Let $1 \le k \le n < N$ and $0 < \eps \le 1/(2C)$.   Then
with probability at least
\[
1 - 3 (C \eps)^{n-k+1} - k e^{- \gamma n}
\]
we have the following.

\smallskip

\noindent
(1) If $2<p <\infty$, then
\[
{\cal R}_n({\cal E}_p ) \le
\frac{8 \sqrt{\log(1/\eps)}}{\eps} \,\, \sqrt{\frac{2n}{n-k +1}} \, \,
\left(\sum_{i>k} \sigma_i^{\frac{2p}{p-2}}\right)^\frac{p-2}{2p}
+
\frac{2 \sqrt n}{\eps (n-k+1)} \ \left(\sum_{i>n} \sigma_i^{p'}\right)^{1/p'}.
\]

\noindent
(2) If $1 < p \le 2$, then
\[
{\cal R}_n({\cal E}_p) \le
\frac{8 \sqrt{\log(1/\eps)}}{\eps} \,\, \sqrt{\frac{2n}{n-k +1}}\,  \, \sigma_{k+1}
+
\frac{2 \sqrt{p' \, n}}{\eps (n-k+1)} \ \left(\sum_{i>k} \sigma_i^{p'}\right)^{1/p'} .
\]

\noindent
(3) If $p = 1$, then
\[
{\cal R}_n({\cal E}_1) \le
\frac{C_1 \sqrt{\log(1/\eps)}}{\eps}\,\,  \sqrt{\frac{ n}{n-k +1}}  \,\, \sigma_{k+1}
+
\frac{2 \sqrt{n}}{\eps (n-k+1)} \ \sup_{i>k} \sigma_i \sqrt{\log(2 (i-k))}\ .
\]
\end{corollary}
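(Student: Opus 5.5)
We apply Theorem~\ref{MainThm} with the coordinate subspace $E=\mathrm{span}\{e_{k+1},\dots,e_N\}$, which has codimension $k$. The probability bound then transfers verbatim. Since $P_E$ is a coordinate projection, $P_E{\cal E}_p$ is again a $p$-ellipsoid in $\RR^{\{k+1,\dots,N\}}$ with semi-axes $\{\sigma_i\}_{i>k}$. Indeed, $P_E{\cal E}_p\subset{\cal E}_p\cap E$ trivially from the defining inequality, and conversely ${\cal E}_p\cap E\subset P_E{\cal E}_p$. It remains to compute the two deterministic quantities $\rad(P_E{\cal E}_p)$ and $\ell_*(P_E{\cal E}_p)$ and insert them into the bound of Theorem~\ref{MainThm}.

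\textbf{The radius term.} This comes directly from Lemma~\ref{geom-p-ell} with $I=\{k+1,\dots,N\}$. For $p\le 2$ it equals $\max_{i>k}\sigma_i=\sigma_{k+1}$, using monotonicity of $\sigma$. For $2<p<\infty$ it equals $\big(\sum_{i>k}\sigma_i^{2p/(p-2)}\big)^{(p-2)/(2p)}$. The constant $6$ in Theorem~\ref{MainThm} is weakened to $8$, or to $C_1$ when $p=1$, which costs nothing.

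\textbf{The $\ell_*$ term.} By \eqref{p-dual}, $(P_E{\cal E}_p)^\circ$ (polar taken inside $E$) is the unit ball of $x\mapsto(\sum_{i>k}(\sigma_i|x_i|)^{p'})^{1/p'}$, i.e.\ the $p'$-ellipsoid with semi-axes $1/\sigma_i$. Hence
\[
\ell_*(P_E{\cal E}_p)=\E\Big(\sum_{i>k}\sigma_i^{p'}|g_i|^{p'}\Big)^{1/p'}.
\]
For $1<p\le2$ we have $p'\ge2$, and Jensen gives the bound $(\E|g|^{p'})^{1/p'}(\sum_{i>k}\sigma_i^{p'})^{1/p'}\le\sqrt{p'}\,(\sum_{i>k}\sigma_i^{p'})^{1/p'}$, exactly as in Lemma~\ref{geom-p-ell}. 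For $p=1$ the polar is an $\ell_\infty$-type body, and
\[
\ell_*(P_E{\cal E}_1)=\E\max_{i>k}\sigma_i|g_i|\lesssim\sup_{i>k}\sigma_i\sqrt{\log(2(i-k))}
\]
by the $p=\infty$ case of Lemma~\ref{geom-p-ell} (the standard bound for maxima of weighted Gaussians with ordered weights). The constant this produces is absorbed into $C_1$ after adjusting the statement, or else the stated factor $2$ is kept and the constant is absorbed on the first term. This case is the only delicate bookkeeping point.

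\textbf{The main obstacle: case $2<p<\infty$.} Here the stated second term involves $\sum_{i>n}\sigma_i^{p'}$ rather than $\sum_{i>k}$, and has no $\sqrt{p'}$; note $p'<2$. I would proceed in three steps.
\begin{enumerate}
\item Bound $\E\|(\sigma_ig_i)_{i>k}\|_{p'}$ by splitting the coordinates into the blocks $k<i\le n$ and $i>n$, using the triangle inequality.
\item On the tail block, $(\E|g|^{p'})^{1/p'}\le1$ since $p'\le2$, which gives exactly $(\sum_{i>n}\sigma_i^{p'})^{1/p'}$.
\item On the head block, bound $\|\cdot\|_{p'}\le(n-k)^{1/p'-1/2}\|\cdot\|_2$ and use Hölder as in \eqref{Hold-diam}. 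This gives a contribution of order $\sqrt{n-k}\,(n-k)^{1/p'-1/2}\cdots$, which must be dominated by $\sqrt{n(n-k+1)}\,\rad(P_E{\cal E}_p)$ so that it can be absorbed into the first term (whence the constant $8$ replacing $6$).
\end{enumerate}
I expect checking this absorption to be the main step. It uses $\frac{\sqrt n}{n-k+1}\cdot(n-k)^{1/2}\rad\lesssim\sqrt{\frac{n}{n-k+1}}\,\rad$ and $\eps<1$. Note that the Hölder bound $\|(\sigma_ig_i)_{k<i\le n}\|_{p'}\le\big(\sum\sigma_i^{2p/(p-2)}\big)^{(p-2)/(2p)}\|g\|_2$ holds precisely because $\frac1{p'}=\frac12+\frac{p-2}{2p}$. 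Taking expectations gives $\E\|g\|_2\le\sqrt{n-k}$, and the head contribution is at most $\frac{2\sqrt{n}\sqrt{n-k}}{\eps(n-k+1)}\rad\le\frac{2}{\eps}\sqrt{\frac{2n}{n-k+1}}\rad$, which fits into the increase from $6\sqrt{\log(1/\eps)}$ to $8\sqrt{\log(1/\eps)}$ since $\log(1/\eps)\ge1$.
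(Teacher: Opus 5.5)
Your proposal is correct and is essentially the paper's proof: take $E=(\RR^k)^\perp$, read $\rad(P_E{\cal E}_p)$ and $\ell_*(P_E{\cal E}_p)$ off Lemma~\ref{geom-p-ell}, and for $2<p<\infty$ split the $p'$-sum at $n$ and use H\"older with $\frac{1}{p'}=\frac12+\frac{p-2}{2p}$, so that the head block costs $\sqrt{n-k}\,\rad(P_E{\cal E}_p)$ and is absorbed by raising $6$ to $8$ (the paper applies H\"older to the deterministic $\sigma_i$ after bounding $\ell_*$, while you apply it inside the expectation and use $\E\bigl(\sum_{k<i\le n}g_i^2\bigr)^{1/2}\le\sqrt{n-k}$; keep this direct bound and drop the detour through $(n-k)^{1/p'-1/2}\|\cdot\|_2$ and \eqref{Hold-diam}, which would bring in a $p$-dependent factor from $\E\bigl(\sum_{k<i\le n}|g_i|^p\bigr)^{1/p}$). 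For $p=1$, the absolute constant in $\ell_*(P_E{\cal E}_1)\le C\sup_{i>k}\sigma_i\sqrt{\log(2(i-k))}$ cannot be shifted onto the first term as you suggest, since the second term may dominate; it has to multiply the second term, an omission that the paper's stated factor $2$ (and its proof, which also writes $\le C\sup$) shares.
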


\begin{remark}
Note that in the case $1  < p \le 2$,
$$
  \left(\sum_{i>k} \sigma_i^{p'}\right)^{1/p'} \leq \left(\sum_{i>n} \sigma_i^{p'}\right)^{1/p'}
  + (n-k)^{1/p'}  \sigma_{k+1}  \leq \left(\sum_{i>n} \sigma_i^{p'}\right)^{1/p'}
  + (n-k+1)^{1/2}  \sigma_{k+1},
$$
therefore we can have the sum in the upper bound start from $i=n+1$ (as in the case $p>2$) at the cost of multiplying by a factor of $\sqrt{p'}$ the first term of the upper bound too.
\end{remark}

Next we provide two lower bounds, which are consequences of Theorems~\ref{LowerThm} and \ref{LowerGeneralThm}.

\begin{corollary}
	\label{cor:lp-ellipsoid-lower}
Let $1 \le n < N$ and $1\leq p<\infty$.
For every $t>0$, with probability at least
\[
1 - e^{-n/8} - e^{-t^2/2}
\]
the following holds.
If $2\le p <\infty$,
\[
 {\cal R}_n({\cal E}_p ) \ge
 \frac{\sqrt n}{2 \left( \displaystyle \sqrt{p} \bigg( \sum_{i=1}^{n+1} \frac{1}{\sigma_i^p}\bigg)^{1/p} + \frac{t}{\sigma_{n+1}} \right)}\ .
\]
If $1 \le p < 2$,
\[
{\cal R}_n({\cal E}_p ) \ge
\frac{\sqrt n}{2 \left( \displaystyle  \bigg( \sum_{i=1}^{n+1} \frac{1}{\sigma_i^p}\bigg)^{1/p} + t \bigg( \sum_{i=1}^{n+1} \frac{1}{\sigma_i^{2p/(2-p)}}\bigg)^{1/p - 1/2} \right)}\ .
\]
\end{corollary}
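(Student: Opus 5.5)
This follows from Theorem~\ref{LowerThm} applied with the coordinate subspace $F=\RR^{n+1}$ spanned by the first $n+1$ basis vectors. It then remains to bound $\ell(K\cap F)$ and $\rad((K\cap F)^\circ)$ for $K={\cal E}_p$, the $p$-ellipsoid defined in \eqref{def:Ep}.

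First, ${\cal E}_p\cap F$ is, inside $F$, the $p$-ellipsoid with semi-axes $\sigma_1,\dots,\sigma_{n+1}$. For coordinate subspaces, section and projection coincide, so ${\cal E}_p\cap F=P_F{\cal E}_p$ with $I=\{1,\dots,n+1\}$. Lemma~\ref{geom-p-ell} then gives
\[
\ell({\cal E}_p\cap F)\le \max\{1,\sqrt{p-1}\}\Big(\sum_{i=1}^{n+1}\sigma_i^{-p}\Big)^{1/p}.
\]
The constant is at most $\sqrt p$ when $p\ge2$ and equals $1$ when $p<2$, which produces the first term in each denominator.

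Second, I compute $\rad(({\cal E}_p\cap F)^\circ)$. By \eqref{p-dual}, applied in $F$, the polar is $\{x\in F:\sum_{i\le n+1}(\sigma_i|x_i|)^{p'}\le1\}$. This is the $p'$-ellipsoid with semi-axes $1/\sigma_i$, so I apply the radius part of Lemma~\ref{geom-p-ell} with $p'$ in place of $p$ and $\sigma_i^{-1}$ in place of $\sigma_i$.
\begin{itemize}
\item If $p\ge2$, then $p'\le2$, and the radius is $\max_{i\le n+1}\sigma_i^{-1}=1/\sigma_{n+1}$ because $\sigma$ is non-increasing.
\item If $1<p<2$, then $2<p'<\infty$, and the radius is $\big(\sum_{i\le n+1}\sigma_i^{-2p'/(p'-2)}\big)^{1/2-1/p'}$. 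Substituting $p'=p/(p-1)$ gives $2p'/(p'-2)=2p/(2-p)$ and $1/2-1/p'=1/p-1/2$, which matches the stated expression.
\item If $p=1$, then $p'=\infty$, and the formula $(\sum\sigma_i^{-2})^{1/2}$ agrees with the stated expression at $p=1$ (exponent $2$, power $1/2$).
\end{itemize}

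Plugging both bounds into Theorem~\ref{LowerThm} gives the claim with the same probability $1-e^{-n/8}-e^{-t^2/2}$.

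The only delicate points are bookkeeping. One must note that the polar taken inside $F$ is indeed given by \eqref{p-dual} in dimension $n+1$; this holds because a coordinate section of ${\cal E}_p$ is again a $p$-ellipsoid. One must also handle $p=2$, which I place in the first case with $\sqrt{p-1}=1\le\sqrt p$. If some $\sigma_i=0$ the body is degenerate, so I assume all $\sigma_i>0$, as is implicit in the statement.
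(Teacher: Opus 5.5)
Your proposal is correct and follows essentially the same route as the paper. Both apply Theorem~\ref{LowerThm} with $F=\RR^{n+1}$, bound $\ell({\cal E}_p\cap F)$ by Lemma~\ref{geom-p-ell} (using $\max\{1,\sqrt{p-1}\}\le\sqrt p$ when $p\ge 2$ and $=1$ when $p<2$), and compute $\rad(({\cal E}_p\cap F)^\circ)$ as the radius of the $p'$-ellipsoid with semi-axes $1/\sigma_i$, including the conversion $2p'/(p'-2)=2p/(2-p)$; your explicit check of the case $p=1$, which the paper only says is ``obtained similarly'', is also correct.
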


\begin{corollary}
	\label{LowerEllpThm}
Let $1 \le n < N$. For  $k<N$ and $1\leq p\leq \infty$ set
$$
  r_{k, p} =
  \left\{
	\begin{array}{ll} \sigma_1 \qquad &  \textrm{when} \ 1\le  p \le 2
		\\
		\displaystyle \left(\sum_{i=1}^k \, \sigma_i^{\frac{2p}{p-2}} \right)^{1/2 - 1/p} \qquad &  \textrm{when} \ 2<p <\infty
        \\
        \displaystyle \left(\sum_{i=1}^k \, \sigma_i^2 \right)^{1/2} \qquad &  \textrm{when} \ p =\infty
	\end{array}
	\right.\ .
$$
For $p>1$ denote by $I_p$ the set of all $k<N$ satisfying
\begin{equation}
	\label{eq:hypLower}
	\bigg( \sum_{j>k} \sigma_j^{p'} \bigg)^{1/p'}
	\ge
	\left\{
	\begin{array}{ll}
       8 \sqrt{ n} \,  \sigma_{k+1} \qquad &  \textrm{when} \ 1 < p \le 2
 \\
       \displaystyle  8 \sqrt{ n} \bigg(\sum_{j>k} \sigma_j^{\frac{2p}{p-2}} \bigg)^{1/2 - 1/p} \qquad &  \textrm{when} \ 2<p\leq \infty
	\end{array}
	\right.
\end{equation}
(here, for $p=\infty$,  $2p/(p-2)$ is understood as $(1/2-1/p)^{-1}=2$).  Assume $I_p\neq \emptyset$.
Then
with probability at least  $1 - 2e^{- 2 n}$ one has
\[
{\cal R}_n({\cal E}_p )
\ge  \max_{k\in I_p}
\Bigg(
 \frac{1}{r_{k, p}} +
 8 \, \sqrt{n} \, \bigg( \displaystyle \sum_{j>k} \sigma_j^{p'} \bigg)^{-1/p'}
\Bigg)^{-1}
\geq
\frac{1}{2} \, \max_{k\in I_p}\, \min\Bigg\{ r_{k, p}, \,
\frac{1}{8 \, \sqrt{n}} \bigg( \sum_{j>k} \sigma_j^{p'} \bigg)^{1/p'}
\Bigg\}.
\]
Moreover, there exist absolute positive constants $c$ and  $C$ such that,
if we set $$\eta_k:=\max_{i>k } \sigma_{i}\sqrt{\log(2 (i-k))},$$ then for $p=1$ we will have,
with probability at least  $1 - 2e^{- 2 n}$,
\[
{\cal R}_n({\cal E}_p )
\ge  \max_k
\left(
 \frac{1}{\sigma_1} +
\frac{ \sqrt{n}}{ c \,\eta_k}
\right)^{-1}\geq \frac{1}{2} \max_k \min\left\{ \sigma_1, \, \frac{ c\, \eta_k}{ \sqrt{n}} \right\},
\]
where the maximum is taken over all $1\leq k<N$ satisfying
 $
    \eta_k\geq C \sqrt{n} \, \sigma_{k+1} .
 $
\end{corollary}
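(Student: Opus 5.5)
The plan is to apply Theorem~\ref{LowerGeneralThm} to $K={\cal E}_p$, for each $k\in I_p$, with the coordinate subspace $E=\operatorname{span}\{e_j: j>k\}$. This $E$ has codimension $k$, and $E^\perp=\RR^k$ is spanned by the first $k$ coordinates. Since the right-hand side of Theorem~\ref{LowerGeneralThm} is then a lower bound holding for each admissible $k$, we can take the maximum over $k\in I_p$.

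First we compute the ingredients.
\begin{itemize}
\item Because ${\cal E}_p$ is unconditional, $P_E{\cal E}_p={\cal E}_p\cap E$, so $\delta_{E,K}=1$.
\item Similarly, $K\cap E^\perp=P_{[k]}{\cal E}_p$. Lemma~\ref{geom-p-ell} then gives $R(K\cap E^\perp)=\sigma_1$ for $p\le2$ (since $\sigma$ is non-increasing), and the $\ell_{2p/(p-2)}$-type sum over $i\le k$ for $p>2$. In every case this equals $r_{k,p}$.
\item Again by Lemma~\ref{geom-p-ell}, $R(P_E K)$ equals $\sigma_{k+1}$ for $p\le 2$, and $\bigl(\sum_{j>k}\sigma_j^{2p/(p-2)}\bigr)^{1/2-1/p}$ for $p>2$.
\item By \eqref{p-dual}, $(P_E K)^\circ$ is the $p'$-ellipsoid in $E$ with weights $1/\sigma_j$. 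Hence $\ell_*(P_E K)=\E\bigl(\sum_{j>k}\sigma_j^{p'}|g_j|^{p'}\bigr)^{1/p'}$. The lower half of the argument in Lemma~\ref{geom-p-ell} (Jensen applied to $\E\|w\|_{p'}\ge\|\E w\|_{p'}$) gives
\[
\ell_*(P_EK)\ge\sqrt{2/\pi}\Bigl(\sum_{j>k}\sigma_j^{p'}\Bigr)^{1/p'}\ge \tfrac34\Bigl(\sum_{j>k}\sigma_j^{p'}\Bigr)^{1/p'}.
\]
\end{itemize}

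With these estimates, hypothesis \eqref{eq:hypLower}, whose constant is $8$, implies
\[
\ell_*(P_EK)\ge \tfrac34\cdot 8\sqrt n\,R(P_EK)=6\sqrt n\,R(P_EK),
\]
which is exactly \eqref{assumption:key}. Theorem~\ref{LowerGeneralThm} then yields
\[
{\cal R}_n\ge\Bigl(\tfrac1{r_{k,p}}+\tfrac{6\sqrt n}{\ell_*(P_EK)}\Bigr)^{-1}\ge\Bigl(\tfrac1{r_{k,p}}+8\sqrt n\bigl(\textstyle\sum_{j>k}\sigma_j^{p'}\bigr)^{-1/p'}\Bigr)^{-1},
\]
since $6/(3/4)=8$. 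The final inequality follows from $(a+b)^{-1}\ge\frac12\min\{1/a,1/b\}$.

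There is one point to check about the probability. Theorem~\ref{LowerGeneralThm} gives probability $1-2e^{-2n}$ for each fixed $k$, but the maximum over $k$ is a deterministic choice. We therefore choose the maximizing $k\in I_p$ beforehand, since the right-hand side does not depend on the randomness, and then apply the theorem once. The same trick handles the $p=1$ case.

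For $p=1$ the only change is the computation of $\ell_*$. Here $(P_EK)^\circ$ is $\{x:\max_{j>k}\sigma_j|x_j|\le1\}$-dual, so $\ell_*(P_EK)=\E\max_{j>k}\sigma_j|g_j|$. The $p=\infty$ part of Lemma~\ref{geom-p-ell}, applied to the weights $1/\sigma_j$ listed in increasing order, i.e.\ $\sigma_{k+1}\ge\sigma_{k+2}\ge\dots$, gives $\ell_*(P_EK)\approx\max_{i>k}\sigma_i\sqrt{\log(2(i-k))}=\eta_k$. So $\ell_*(P_EK)\ge c\,\eta_k$, while $R(P_EK)=\sigma_{k+1}$ and $r_{k,1}=\sigma_1$. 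The condition $\eta_k\ge C\sqrt n\,\sigma_{k+1}$ with $C=6/c$ gives \eqref{assumption:key}, and plugging into Theorem~\ref{LowerGeneralThm} (absorbing $6$ into $c$) gives the stated bound. The main obstacle is mostly bookkeeping: verifying the reindexing in the $p=1$ Gaussian-max equivalence and the constant $\sqrt{2/\pi}\ge 3/4$ that makes $8$ versus $6$ work. The $p=\infty$ case of the main statement is treated the same way, with $p'=1$ and $\ell_*(P_EK)=\E\sum_{j>k}\sigma_j|g_j|=\sqrt{2/\pi}\sum_{j>k}\sigma_j$.
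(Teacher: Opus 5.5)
Your proposal is correct and follows the paper's proof essentially step for step. The paper also takes the coordinate subspace $E$ with $E^\perp=\RR^k$ and gets $\delta_{E,K}=1$ from unconditionality, reads the radii off Lemma~\ref{geom-p-ell}, and uses the Jensen bound $\ell_*(P_E{\cal E}_p)\ge\sqrt{2/\pi}\,(\sum_{j>k}\sigma_j^{p'})^{1/p'}$ together with $8\sqrt{2/\pi}>6$ to verify \eqref{assumption:key}, with the Gaussian-maximum equivalence handling $p=1$. Your remark that the maximizing $k$ can be fixed in advance, so that a single application of Theorem~\ref{LowerGeneralThm} gives probability $1-2e^{-2n}$, makes explicit a point the paper leaves implicit.
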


\begin{remark}
	We did not try to optimize the constant $8$ appearing in the above proposition. Careful analysis
of our proofs shows that, given $p>1$ and any $\eps, \delta \in (0,1)$, the constant $8$ in
condition (\ref{eq:hypLower}) can be substituted by $\sqrt{\pi/2}\, (1+\eps)/(1-\delta)$, while the constant $8$ appearing
in the lower bound for ${\cal R}_n$  can be substituted by $\sqrt{\pi/2}\, (1+\eps)/\delta$, with the claimed
probability being at least $1 - 2\exp{(- \eps^2n/2)}$.
\end{remark}

\begin{remark}
  Assume for instance that $p=2$ and $\sum_{j>k} \sigma_j^{2} \geq 64 n\,\sigma_k^2
  \geq 64 n\,\sigma_{k+1}^2$. Then
  Corollary~\ref{LowerEllpThm} can be applied, and it implies that
  \[
{\cal R}_n({\cal E}_2 )
\ge
\left(
 \frac{1}{\sigma_1} +
8 \, \sqrt{n} \, \bigg( \displaystyle \sum_{j>k} \sigma_j^{2} \bigg)^{-1/2}
\right)^{-1}\geq \frac{ \sigma_{k}}{2},
\]
  which in particular recovers Theorem~5 of \cite{HKNPU}, where
  the condition $\sum_{j>k} \sigma_j^{2} \geq Cn \,\sigma_{k}^2$
  was used.
\end{remark}

\begin{remark}\label{ill-ell}
  As we mentioned in the Introduction, our two lower bounds are of different nature. Corollary~\ref{cor:lp-ellipsoid-lower} provides estimates in terms of the largest axes of the ellipsoid (what we called ``large" part of the body in Remark~\ref{lower-nat}), while
Corollary~\ref{LowerEllpThm} works with the smallest axes of the ellipsoid (what we called ``small" part of the body).
\end{remark}

\section{Proofs of main theorems}
\label{sec:upper}

\subsection{Proof of Theorem~\ref{MainThm}}
Fix $k \ge 1$ and  a subspace $E$ of codimension $k$.  By the rotational invariance of the Haar measure on the Grassmannian
and of the Gaussian measure,
we can assume that $E = (\Rk)^\perp $ and $E^\perp = \Rk$. Let $y$ be a point in $K \cap \ker \Gn $, then
\begin{equation}
	\label{eq:decompose}
y = P_E y + P_{E^\perp} y .
\end{equation}
Since $y \in K$,  we have by definition,
\begin{equation}
	\label{eq:first}
|P_E y | \le  \rad (P_E K).
\end{equation}
Our goal is to bound $|P_{E^\perp} y| = |P_k y|$. Recall that
$\Gn : \RNN \to \Rn$ is a random matrix with i.i.d. ${\cal N}(0,1)$ Gaussian entries and that
$\Gkn : \Rk \to \Rn$ is its restriction to $\Rk = E^\perp$. Define by $A$ the left inverse of $\Gkn$ that is
\[
A = ( \Gkn^* \Gkn)^{-1} \Gkn^*,
\]
so that $A \Gkn = \Id$. Observe that $A$ is independent of ${\Gamma_n^k} = \Gn - \Gkn$, and
since $y \in \ker \Gn$,
\[
P_k y = A \Gkn P_k y = A \Gn P_k y = A \Gn (P_k y - y) = A {\Gamma_n^k} P_E y.
\]
Since $A$ and ${\Gamma_n^k}$ are independent, we start by bounding
\[
b_0 = \max_{y \in K\cap \ker \Gn} |P_k y| = \max_{z \in P_E K} |A {\Gamma_n^k}  z|
\]
conditionally on $A$.
We have
\begin{align*}
	b_0 & = \max_{z \in P_E K} \max_{v \in B_2^k} \langle A {\Gamma_n^k} z, v \rangle = \max_{z \in P_E K} \max_{v \in B_2^k} \langle {\Gamma_n^k} z, A^* v \rangle
	\\
	& = \max_{z \in P_E K} \| {\Gamma_n^k} z\|_{(A^* B_2^k)^\circ} = \| {\Gamma_n^k} : P_E K \to (A^* B_2^k)^\circ \|.
\end{align*}
From the Chevet-Gordon inequality \cite{Chevet1977, Gordon1985} (note that it also holds in the non-symmetric case with the same proof), we obtain that
\[
\E b_0 \le \max_{z \in P_E K} |z| \ \ell((A^* B_2^k)^\circ) + \max_{w \in A^* B_2^k} |w| \ \ell((P_E K )^\circ).
\]
Let $G=G^{(n)}$ (that is, $G$ is a standard Gaussian vector in $\Rn$).
By definition we have
\[
\ell((A^* B_2^k)^\circ) = \E \max_{w \in A^* B_2^k} \langle G, w \rangle = \E \max_{v \in B_2^k} \langle AG, v \rangle = \E |AG| \le \|A\|_{HS}
\]
and
\[
\max_{w \in A^* B_2^k} |w| = \max_{v \in B_2^k} |A^* v| = \|A^*\| = \|A\|.
\]
Therefore we conclude that, conditionally on $A$,
\[
\E b_0 \le \rad(P_E K) \ \|A\|_{HS} + \ell((P_E K )^\circ) \ \|A\|.
\]
Moreover, since the map $T \mapsto \| T : P_E K \to (A^* B_2^k)^\circ \|$ is $L$-Lipschitz with respect
to the Hilbert-Schmidt norm with $ L \le \|A\| \rad(P_EK)$,
by the Gaussian concentration inequality \eqref{ineq:concentration} we get that, for every $u > 0$,
\[
\P \left(b_0 \ge u \|A\| \rad(P_E K) + \E  \| \Gn : P_E K \to (A^* B_2^k)^\circ \| \right)
\le
e^{-u^2/2}.
\]
We choose
\[
u = \sqrt{\log(1/\eps)} \sqrt{2(n-k+1)}.
\]
Then with probability at least
$1- \eps^{n-k+1}$,
\begin{equation}
	\label{eq:mainupper}
b_0 \le \sqrt{\log(1/\eps)} \sqrt{2(n-k+1)} \|A\| \rad(P_E K) + \rad(P_E K) \ \|A\|_{HS} + \ell((P_E K )^\circ) \ \|A\|.
\end{equation}
Since $A$ is the left-inverse of $\Gkn$ and for every $i \le k$, $s_i(\Gkn) \ge s_i(\Gamma_{in})$, we observe that
\[
\|A\| = \frac{1}{s_k(\Gkn)} \qquad \textrm{and} \qquad \|A\|_{HS} = \sqrt{\sum_{i=1}^{k} \frac{1}{s_i^2(\Gkn)}} \le \sqrt{\sum_{i=1}^{k} \frac{1}{s_i^2(\Gamma_{in})}}.
\]
By Theorem 1.1 in \cite{RV2008}, for some absolute constant $\gamma>0$ with probability at least
\[
1 - (C \eps)^{n-i+1} - e^{- \gamma n}
\]
we have $s_i(\Gamma_{in}) \ge \eps (\sqrt n - \sqrt{i-1})$. Since $\eps \le 1/(2C)$,
\[
\sum_{i=1}^{k}(C \eps)^{-i} \le (C \eps)^{-k-1} / ((C \eps)^{-1} - 1)
\le 2 (C \eps)^{-k}.
\]
Thus with probability  at least
\begin{equation}
	\label{eq:proba}
1 - 2  (C \eps)^{n-k+1} - k e^{- \gamma n}
\end{equation}
we obtain that for every $i \in \{1, \ldots, k \}$,
$s_i(\Gamma_{in}) \ge \eps (\sqrt n - \sqrt{i-1}) \ge \eps \frac{n-i+1}{2 \sqrt n}$.
Therefore,
\[
\|A\| \le  \frac{2 \sqrt n}{\eps (n-k+1)}
\quad  \quad\textrm{and} \quad \quad
\|A\|_{HS} \le \frac{2}{\eps} \sqrt{\sum_{i=1}^{k} \frac{n}{(n-i+1)^2}}
\le \frac{2}{\eps} \sqrt{\frac{2n}{n-k+1}}.
\]
Combining this with \eqref{eq:mainupper} and \eqref{eq:proba}, we conclude that with probability at least
\[
1 - \eps^{n-k+1} - 2  (C \eps)^{n-k+1} - k e^{- \gamma n}
\]
one has
\[
b_0 \le \frac{2}{\eps} \left( (1 +\sqrt{\log (1/\eps)}) \rad(P_E K) \ \sqrt{\frac{2n}{n-k+1}} + \ell((P_E K )^\circ) \  \frac{\sqrt n}{n-k+1} \right).
\]
Since $\eps \le 1/(2C)$, we can assume that $1 \le \log(1/\eps)$; we also observe that for $1 \le k \le n$, we have
$\sqrt{\frac{n}{n-k+1}}\ge 1$. By  \eqref{eq:decompose} and \eqref{eq:first},  $|y| \le \rad(P_E K) + b_0$ and we finally obtain that with probability at least
\[
1 - 3 (C \eps)^{n-k+1} - k e^{- \gamma n}
\]
one has
\[
\forall y \in \ker \Gn \cap K, \quad |y| \le \left(  \frac{6 \sqrt{\log(1/\eps)}}{\eps} \sqrt{\frac{2n}{n-k+1}}\right)  \rad(P_E K) +
 \frac{2 \sqrt n}{\eps (n-k+1)} \ \ell((P_E K )^\circ)
\]
as required.
\qed

\begin{remark}
	We could also have adopted an algorithmic point of view as in \cite{HKNPU}. Let $y \in K$.
Then by the decomposition $y = P_k y + P_E y$
	and $\Gn = \Gkn + \Gamma_n^k$, one has
	\[
	| y - A \Gn y | = | P_E y - A \Gamma_n^k P_E y | \le  |P_E y | + | A \Gamma_n^k P_E y | \le R(P_E K) + b_0.
	\]
	This shows that the proof not only gives an upper bound for the Euclidean norm of any point $y \in K \cap \ker \Gn$ but also of an approximation
	of any $y \in K$ by its reconstruction via the (random) linear algorithm defined by $A$.
\end{remark}

\subsection{Proof of Theorem \ref{LowerThm}}
\label{sec:lower}

Our strategy to obtain a lower bound for ${\cal R}_n(K)$ is based on the fact that
a random subspace of codimension $n$ in $\RR^N$ is distributed as the image of a
Gaussian operator acting from $\RR^{N-n}$ to $\RNN$.

Let $F$ be an $(n+1)$-dimensional subspace of $\RNN$.  By rotational invariance of the Gaussian measure, we can assume that $F = \RN$.
Let $G$ be a Gaussian matrix $G : \RR^{N-n} \to \RR^N$ with independent standard Gaussian entries. We see this matrix as the matrix
with $N$ rows $G_1, \ldots G_N$, where $G_i$'s are independent Gaussian ${\cal N}(0, \Id)$ random vectors in $\RR^{N-n}$.
Then $\im G$ is uniformly distributed as
a random subspace of codimension $n$ in $\RR^N$. Therefore, by definition \eqref{eq:def},  ${\cal R}_n(K)$ may be written as
\[
{\cal R}_n(K) = \sup_{y \in \RR^{N-n}\setminus\{0\}} \frac{|Gy|}{\|Gy\|_K}.
\]
Observe that $Gy$ is the vector with coordinates $\langle G_i, y \rangle$,  $1\leq i\leq N$.
Define $E$ as the random subspace spanned by $\{G_{n+2}, \ldots, G_N\}$ and set $\overline{y} = P_{E^\perp} G_1$, so that
\begin{align*}
G\overline{y}  & = (\langle G_1, P_{E^\perp} G_1 \rangle, \ldots,\langle G_{n+1}, P_{E^\perp} G_1 \rangle, 0, \ldots, 0)
\\
& = (\langle P_{E^\perp} G_1, P_{E^\perp} G_1 \rangle, \ldots,\langle P_{E^\perp} G_{n+1}, P_{E^\perp} G_1 \rangle, 0, \ldots, 0).
\end{align*}
Almost surely, $E$ has dimension $N-n -1$, hence $E^\perp$ has dimension 1.
Moreover, it is independent of $G_1, \ldots, G_{n+1}$, hence, by properties of Gaussian vectors, $G\overline{y} $ is distributed as
\[
(h_1^2, h_1 h_2, \ldots, h_1 h_{n+1}) = h_1 (h_1, \ldots, h_{n+1})
\]
in $\RN$,  where $h_1, \ldots, h_{n+1}$ are independent standard Gaussian random variables. Therefore, almost surely,
\[
{\cal R}_n(K) \ge \frac{|G\overline{y}|}{\|G\overline{y}\|_K} \sim \frac{|G_F|}{\|G_F\|_K},
\]
where $G_F$ is a standard $(n+1)$-dimensional Gaussian random vector in $F = \RR^{n+1}$.
It is well known that $\E |G_F| \ge \sqrt n$ (to show this, one can use, say, the Poincar\'e inequality; see e.g. \cite[Chapter~3.1]{Ledoux}).
Applying \eqref{ineq:concentration} with $t = \sqrt{n}/2$, one has
\[
\P (|G_F| \le \sqrt{n} / 2) \le  e^{-t^2/2} = e^{-n/8}.
\]
At the same time, since $z \mapsto \|z\|_{K \cap F}$ is $\rad((K \cap F)^\circ)$-Lipschitz with respect
to the Euclidean norm, \eqref{ineq:concentration} implies that
\[
\forall t > 0,
\quad
\P \left( \|G_F\|_{K\cap F} \ge \E \|G_F\|_{K\cap F} + t \ \rad((K \cap F)^\circ) \right) \le e^{-t^2/2}.
\]
Combining both bounds, we obtain that for every $t>0$,
\[
\P \left( \frac{|G_F|}{\|G_F\|_K} \le \frac{\sqrt{n}}{2 (\ell({K\cap F}) + t \ \rad((K \cap F)^\circ))} \right)
\le
e^{-n/8} + e^{-t^2/2}.
\]
This implies the desired result.
\qed

\subsection{Proof of Theorem~\ref{LowerGeneralThm}}

We will use the  following statement, which  is a classical tool from Asymptotic Geometric Analysis.
Given that it is not usually stated in this form in various textbooks,  for the sake of completeness here,
we include it here in full generality and provide the proof.
In particular, special attention is paid to the values of the constants, the fact that the convex body may not be symmetric with respect to the origin, and to the fact that the result is valid for a subset of the unit sphere.

\begin{theorem}
	\label{thm:Gordon}
Let $L \subset \RR^m$ be a convex body containing the origin in its interior
and $T$ be a closed subset of $S^{n-1}$.
Let $\Gamma : \ell_2^n \to (\RR^m, \| \cdot \|_L)$ be a Gaussian operator given as a matrix with i.i.d.
standard Gaussian entries. Then with probability at least $1 - \exp(-u^2 /2)$, one has
\[
	\inf_{x \in T} \| \Gamma x \|_L \ge \ell(L) - R(L^\circ)( \ell_*(T)+u),
\]
and with probability  at least  $1 - \exp(-u^2 /2)$, one has
\[
	\sup_{x \in T} \| \Gamma x \|_L \le \ell(L) + R(L^\circ)( \ell_*(T)+u).
\]
\end{theorem}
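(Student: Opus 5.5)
We plan to derive both inequalities from Gordon's comparison principle for Gaussian processes (the Gordon min-max / Slepian-type inequality), combined with Gaussian concentration \eqref{ineq:concentration}. Write $\|z\|_L = \sup_{w\in L^\circ}\langle z,w\rangle$, which holds for non-symmetric $L$ as well. Then
\[
\inf_{x\in T}\|\Gamma x\|_L=\inf_{x\in T}\sup_{w\in L^\circ}\langle \Gamma x,w\rangle .
\]
Consider the process $X_{x,w}=\langle \Gamma x,w\rangle$ on $T\times L^\circ$ and the comparison process
\[
Y_{x,w}=\langle G^{(m)},w\rangle + |w|\,\langle G^{(n)},x\rangle ,
\]
with independent Gaussian vectors. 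Since $|x|=1$ on $T$, a direct computation gives
\[
\E|X_{x,w}-X_{x',w'}|^2-\E|Y_{x,w}-Y_{x',w'}|^2 = 2\,(|w||w'|-\langle w,w'\rangle)(1-\langle x,x'\rangle)\ \ge 0 .
\]
Equality holds when $x=x'$. This is exactly the sign pattern required by Gordon's theorem. Because $T$ need not be symmetric and $|w|$ varies, we will use the standard trick of adding an independent term $g\cdot|w|$ (with $g$ a scalar Gaussian) to $X$ to balance the variances. Alternatively, we can use the version of Gordon's inequality that compares expectations directly:
\[
\E\inf_x\sup_w X_{x,w}\ \ge\ \E\inf_x\sup_w Y_{x,w}.
\]

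For the lower bound on the expectation we have
\[
\inf_x\sup_w Y_{x,w}\ \ge\ \sup_w\langle G^{(m)},w\rangle - \sup_{w\in L^\circ}|w|\cdot\sup_{x\in T}\langle -G^{(n)},x\rangle .
\]
Taking expectations gives $\ell(L)-R(L^\circ)\,\ell_*(-T)$. Since $G^{(n)}$ is symmetric, $\ell_*(-T)=\ell_*(T)$. If the balancing term $g|w|$ is used, it contributes $\E\inf(\cdot)$ corrections that vanish in expectation up to an inequality in the correct direction, because $\E g=0$ and we can bound $\sup_w g|w|\ge$ the relevant term. Handling this term correctly is the delicate point. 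The upper bound on $\E\sup_x\|\Gamma x\|_L$ is easier. Slepian/Sudakov–Fernique applied to $\sup_{x,w}$ with the same two processes gives
\[
\E\sup_{x,w}X\le\E\sup_{x,w}Y\le \ell(L)+R(L^\circ)\,\ell_*(T).
\]
The hypothesis needed there is the increment inequality in the opposite direction, and the computation above supplies it. This is the Chevet–Gordon inequality already used in the proof of Theorem~\ref{MainThm}.

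For the concentration step, view $\Gamma$ as a Gaussian vector in $\RR^{mn}$ with the Hilbert–Schmidt norm. The maps $\Gamma\mapsto\inf_{x\in T}\|\Gamma x\|_L$ and $\Gamma\mapsto\sup_{x\in T}\|\Gamma x\|_L$ are $R(L^\circ)$-Lipschitz, because
\[
\bigl|\|\Gamma x\|_L-\|\Gamma' x\|_L\bigr|\le R(L^\circ)\,|(\Gamma-\Gamma')x|\le R(L^\circ)\,\|\Gamma-\Gamma'\|_{HS}
\]
for unit $x$, and both infima and suprema preserve Lipschitz constants. Applying \eqref{ineq:concentration} with deviation $uR(L^\circ)$ then yields both statements with probability at least $1-e^{-u^2/2}$. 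The main obstacle is the expectation lower bound via Gordon's min-max inequality in the non-symmetric setting with $|w|$ non-constant, that is, verifying the hypotheses precisely, possibly after the variance-balancing modification. Closedness of $T$ and compactness of $L^\circ$ justify passing from finite index sets to the full sets.
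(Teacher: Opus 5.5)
Your overall plan is the paper's: a Gordon-type comparison for the two expectations, then $\ell_*(-T)=\ell_*(T)$ by symmetry, then concentration of the $R(L^\circ)$-Lipschitz maps $\Gamma\mapsto\inf_{x\in T}\|\Gamma x\|_L$ and $\Gamma\mapsto\sup_{x\in T}\|\Gamma x\|_L$. Your concentration step is correct. The expectation step does not go through as written, because of the weight $|w|$ in your comparison process $Y_{x,w}=\langle G^{(m)},w\rangle+|w|\langle G^{(n)},x\rangle$.

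First, the increment computation is wrong. For $X_{x,w}=\langle\Gamma x,w\rangle$ one actually gets
\[
\E|X_{x,w}-X_{x',w'}|^2-\E|Y_{x,w}-Y_{x',w'}|^2=-(|w|-|w'|)^2-2\,(|w||w'|-\langle w,w'\rangle)(1-\langle x,x'\rangle)\le 0 .
\]
So there is no equality at $x=x'$. Moreover, the pattern you state (nonnegative, vanishing at $x=x'$) is the one under which Gordon's theorem gives the reverse inequality $\E\inf\sup X\le\E\inf\sup Y$.

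Adding $g|w|$ to $X$ repairs the sign pattern, but the balancing term then does not average out. It sits inside the supremum, and you only have $\sup_{w\in L^\circ}(\langle\Gamma x,w\rangle+g|w|)\le\|\Gamma x\|_L+R(L^\circ)\,g_+$. This costs an additive $R(L^\circ)/\sqrt{2\pi}$, and $\E g=0$ does not help.

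Second, because $|w|$ is not constant, you cannot pull $R(L^\circ)$ out of the supremum over $w$ when $\sup_{x\in T}\langle \pm G^{(n)},x\rangle$ is negative. This can happen with positive probability when $T$ is not symmetric. You then only obtain $\ell(L)-R(L^\circ)\,\E(\sup_{x\in T}\langle -G^{(n)},x\rangle)_+$ and $\ell(L)+R(L^\circ)\,\E(\sup_{x\in T}\langle G^{(n)},x\rangle)_+$. For example, take $m=n=1$, $L=[-1,1]$, $T=\{1\}$. Then $\sup_{w\in[-1,1]}(hw+|w|g)=\max(g+|h|,0)$, whose expectation strictly exceeds $\ell(L)+R(L^\circ)\ell_*(T)=\E|h|$. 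So even your evaluation of $\E\sup Y$ in the upper bound fails.

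The missing idea is to use the constant $R:=R(L^\circ)$ in place of $|w|$ everywhere. Take $X_{x,w}=\langle\Gamma x,w\rangle+R\,g$ and $Y_{x,w}=\langle G^{(m)},w\rangle+R\,\langle G^{(n)},x\rangle$ for $x\in T$, $w\in L^\circ$. Then $\E X_{x,w}^2=\E Y_{x,w}^2=|w|^2+R^2$ (using $|x|=1$), and
\[
\E X_{x,w}X_{x',w'}-\E Y_{x,w}Y_{x',w'}=(1-\langle x,x'\rangle)(R^2-\langle w,w'\rangle)\ge 0,
\]
with equality when $x=x'$. This is exactly the hypothesis under which Gordon's min-max theorem yields $\E\inf_x\sup_w X\ge\E\inf_x\sup_w Y$.

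Now $\sup_w X_{x,w}=\|\Gamma x\|_L+Rg$ exactly, so the balancing term has mean zero. Also $\inf_x\sup_w Y=\|G^{(m)}\|_L-R\sup_{x\in T}\langle -G^{(n)},x\rangle$ exactly, for every $T$. Together these give $\E\inf_{x\in T}\|\Gamma x\|_L\ge\ell(L)-R\,\ell_*(-T)$.

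For the upper bound, apply Sudakov--Fernique to $X_{x,w}=\langle\Gamma x,w\rangle$ and the same $Y$. This is valid because $\E|Y_{x,w}-Y_{x',w'}|^2-\E|X_{x,w}-X_{x',w'}|^2=2(1-\langle x,x'\rangle)(R^2-\langle w,w'\rangle)\ge0$, and $\sup_{x,w}Y=\|G^{(m)}\|_L+R\sup_{x\in T}\langle G^{(n)},x\rangle$.

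With this change your argument becomes a self-contained version of the paper's proof, which imports these two expectation bounds from Gordon's comparison theorem with $\mathcal{E}=T$, $\Theta=L^\circ$.
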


\medskip

\noindent
{\bf Proof.} Applying   Theorem~2.1 from \cite{Gordon1985}
with  $\mathcal{E}=T$ and $\Theta= L^\circ$, similarly to the proofs of
Theorem~2.5 in \cite{Gordon1985} or Corollary~1.2 in \cite{Gordon1988}, we obtain
\[
\E \inf_{x \in T} \| \Gamma x \|_L \ge \ell(L) - R(L^\circ) \ell_*(-T)
\]
and
\[
\E \sup_{x \in T} \| \Gamma x \|_L \le \ell(L) + R(L^\circ) \ell_*(T).
\]
By symmetry of the Gaussian random variables, $\ell_*(-T)$ can be replaced by $\ell_*(T)$ in the lower bound.
Let $F_{min}$ and $F_{max}$ be the functions defined on the space of $m \times n$ matrices by
\[
F_{min}(A) = \inf_{x \in T} \| A x \|_L
\qquad
\mathrm{and}
\qquad
F_{max}(A) = \sup_{x \in T} \| A x \|_L.
\]
Then $F_{min}$ and $F_{max}$ are $R(L^\circ)$-Lipschitz with respect to the Hilbert-Schmidt norm. Therefore, a direct application of the Gaussian concentration inequality
\eqref{ineq:concentration} proves the result.
\qed

\medskip

\noindent
{\bf Proof of Theorem \ref{LowerGeneralThm}.}
By the rotational invariance of the Gaussian measure, we can assume that $E^\perp = \RR^k$
(considered as the subspace of vectors in $\RNN$ having zero coordinates starting from the $(k+1)$-th one).
Let $E$ be equipped with the norm whose unit ball is $L = (P_E K)^\circ$.
Recall that $\Gamma_n$, $\Gamma_n^k$, and $(\Gamma_n^k)^T : \ell_2^n \to E$ were introduced
in Section~\ref{sec:notations}, and note that  $\Gamma_n^k$ and $\Gamma_{kn}$ are independent.
We start by working conditionally on $\Gamma_{kn}$.
An application of Theorem~\ref{thm:Gordon} with $T = S^{n-1}$
(so that $\ell_*(T) =\ell(B_2^n)$)
and $u = 2 \sqrt{n}$ shows that, with probability at least $1 - \exp(-2n)$, one has
\[
\inf_{x\in S^{n-1}} \| (\Gamma_n^k)^T  \, x \|_{(P_E K)^\circ} \ge \ell_*(P_E K) - 3 \sqrt{n}\, R(P_E K) \ge  \ell_*(P_E K) / 2,
\]
where we used  $\ell((P_E K)^\circ)=\ell_*(P_E K)$, $\ell(B_2^n) \le \sqrt{n}$, and our assumption \eqref{assumption:key}.
Therefore, for every $x\in \RR^n$,
$$
 \frac{\ell_*(P_E K) }{2}  |x| \leq \| (\Gamma_n^k)^T  \, x \|_{(P_E K)^\circ}
  =\max_{y\in P_E K} \langle (\Gamma_n^k)^T  \, x, y\rangle =
  \max_{y\in P_E K} \langle  x, \Gamma_n^k y\rangle =
  \max_{z\in \Gamma_n^k P_E K} \langle  x, z\rangle,
$$
which gives
\begin{equation}
	\label{eq2:keyinclusion}
\frac{\ell_*(P_E K) }{2} B_2^n \subset \Gamma_n^k (P_E K).
\end{equation}
Let $\theta_k \in E^\perp\cap S^{N-1}$ be a vector with the property that
\[
\|-\theta_k\|_K = \inf_{x\in E^\perp\cap S^{N-1}} \|x\|_K = \frac{1}{R(K \cap E^\perp)}.
\]
By \eqref{eq2:keyinclusion},
there exists a vector $u \in P_E K$ such that
\[
\frac{\ell_*(P_E K)}{2 |\Gn (\theta_k)|} \Gn(\theta_k) =  \Gamma_n^k(u)
\]
(note that $\Gn (\theta_k) \ne 0$ with probability 1).
Since $u \in E$, we have $ \Gamma_n^k(u)= \Gn u$.
 Therefore,
\[
z := \frac{2 |\Gkn(\theta_k)|}{\ell_*(P_E K)}\, u - \theta_k \in \ker \Gn.
\]
Since $u \in P_E K\subset E$ and $\theta_k \in E^\perp$ have disjoint supports, we have $|z| \ge |\theta_k| = 1$.
Moreover, by the definition of $\delta_{E,K}$, we have
 $\|u\|_K \le \delta_{E,K} \|u\|_{P_E K} \le \delta_{E,K}$. Hence
\[
\|z\|_K \le  \frac{2 | \Gkn(\theta_k)|}{\ell_*(P_E K)} \|u\|_K +  \|-\theta_k\|_K
\le
\frac{ 2 |\Gkn(\theta_k)| \delta_{E,K}}{\ell_*(P_E K)}  + \frac{1}{R(K \cap E^\perp)}.
\]
Therefore, conditionally on $\Gkn$, with probability greater than $1 - e^{- 2 n}$ we have
\[
\frac{|z|}{\|z\|_{K}}
\ge
\left( \frac{1}{R(K \cap E^\perp)} +  \frac{2 \delta_{E,K}|\Gkn(\theta_k)|}{\ell_*(P_E K)}\right)^{-1}.
\]
Since $\theta_k$ is a fixed vector from the unit sphere, $\Gkn(\theta_k)$ is distributed as a standard Gaussian vector in $\RR^n$.
The concentration inequality \eqref{ineq:concentration} implies that $|\Gkn(\theta_k)| \le 3\sqrt{n}$ with probability at least
 $1 - e^{-2n}$. Therefore, with probability greater than $1- 2 e^{-2 n}$, we have
\[
    {\cal R}_n(K) \ge \frac{|z|}{\|z\|_{K}}
    \ge \left( \frac{1}{R(K \cap E^\perp)} +  \frac{6 \, \delta_{E,K} \, \sqrt{n}}{\ell_*(P_E K)} \right)^{-1}
    \ge \frac{1}{2} \, \min \left\{ R(K \cap E^\perp),\,  \frac{\ell_*(P_E K)}{6 \, \delta_{E,K} \, \sqrt{n}} \right\}.
\]
This  completes the proof.
\qed

\section{Deriving bounds for $p$-ellipsoids}
\label{sec:ellipsoid}

In this section, we provide proofs of Corollaries~\ref{cor:lp-ellipsoid},
\ref{cor:lp-ellipsoid-lower}  and  \ref{LowerEllpThm}.

\subsection{Proof of Corollary~\ref{cor:lp-ellipsoid}}
	
For any $1 \le k \le n$, we choose $E = (\Rk)^\perp$, so that
\[
P_E {\cal E}_p = \left\{ x \in E \,\, \Big|\,\,  \sum_{i > k} \left(\frac{|x_i|}{\sigma_i}\right)^{p} \le 1 \right\}
\quad \quad
\mathrm{and}
\quad\quad
\forall y \in E\,\,\,\,\,\,
\|y\|_{(P_E {\cal E}_p )^\circ} = \left(\sum_{i>k} (\sigma_i |y_i|)^{p'} \right)^{1/p'}.
\]	
Assume $p>2$.  We  evaluate the parameters $\rad(P_E {\cal E}_p)$ and $\ell_*(P_E {\cal E}_p)$ appearing in the statement of Theorem~\ref{MainThm}. By Lemma~\ref{geom-p-ell} and (\ref{p-dual}),
\[
	\rad(P_E {\cal E}_p) \le \left(\sum_{i> k} \sigma_i^{\frac{2p}{p-2}} \right)^{1/2 - 1/p}
\]
and, as $p'<2$,
\[
\ell_*(P_E {\cal E}_p)= \ell((P_E {\cal E}_p)^\circ) = \ell( P_E {\cal E}_{p', 1/\sigma})
\le
\left(\sum_{i>k} \sigma_i^{p'} \right)^{1/p'}.
\]
Moreover,  since
$$1/p' = 1 - 1/p > 1/2 - 1/p,$$  H\"older's inequality yields
\begin{align*}
	\left(\sum_{i>k} \sigma_i^{p'} \right)^{1/p'}
	& \le
	\left(\sum_{i=k+1}^n \sigma_i^{p'} \right)^{1/p'}
	+
	\left(\sum_{i>n} \sigma_i^{p'} \right)^{1/p'}
	\\
	& \le
	\sqrt{n-k} \left(\sum_{i= k+1}^n \sigma_i^{\frac{2p}{p-2}} \right)^{1/2 - 1/p}
	+
	\left(\sum_{i>n} \sigma_i^{p'} \right)^{1/p'}
	\\
	& \le
	\sqrt{n-k} \left(\sum_{i>k} \sigma_i^{\frac{2p}{p-2}} \right)^{1/2 - 1/p}
	+
	\left(\sum_{i>n} \sigma_i^{p'} \right)^{1/p'}.
\end{align*}
This implies
\[
\frac{\sqrt n}{n-k+1} \left(\sum_{i>k} \sigma_i^{p'} \right)^{1/p'}
\le
\sqrt{\frac{n}{n-k+1}} \left(\sum_{i>k} \sigma_i^{\frac{2p}{p-2}} \right)^{1/2 - 1/p}
+
\frac{\sqrt n}{n-k+1} \left(\sum_{i>n} \sigma_i^{p'} \right)^{1/p'}.
\]
Applying Theorem~\ref{MainThm}, we conclude that for every $1 \le k \le n$ with probability at least
\[
1 - 3 (C \eps)^{n-k+1} - k e^{- \gamma n}
\]
one has
\[
{\cal R}_n({\cal E}_p)
\le
\left(\frac{8 \sqrt{\log(1/\eps)}}{\eps}  \sqrt{\frac{2n}{n-k +1}}\right) \, \left(\sum_{i>k} \sigma_i^{\frac{2p}{p-2}}\right)^\frac{p-2}{2p}
+
\frac{2}{\eps} \frac{\sqrt{n}}{n-k+1} \left(\sum_{i>n} \sigma_i^{p'}\right)^{1/p'}.
\]
The case $1 \le p \le 2$ is treated along the same lines upon observing that by Lemma~\ref{geom-p-ell} and (\ref{p-dual}),
$\rad(P_E {\cal E}_p) = \sigma_{k+1}$  (for all $1 \le p \le 2$), and for
 $1  < p \le 2$,
$$
\ell((P_E {\cal E}_p)^\circ) \leq \sqrt{p'}
\left(\sum_{i>k} \sigma_i^{p'} \right)^{1/p'},
$$
while for  $p=1$,
$$\ell((P_E {\cal E}_p)^\circ) = \ell_*(P_E {\cal E}_p) \leq C \sup_{i>k} \sigma_i \sqrt{\log(2 (i-k))}.$$
This completes the proof.
\qed

\subsection{Proof of Corollary~\ref{cor:lp-ellipsoid-lower}}
We choose $F = \RR^{n+1}$. Then
\[
\forall x \in \RR^{n+1}\quad \quad
\|x\|_{{\cal E}_p\cap \,F} = \left(\sum_{i=1}^{n+1} \left(\frac{|x_i|}{\sigma_i}\right)^{p} \right)^{1/p}
\]
and
\[
({\cal E}_p\cap \,F)^\circ =
\left\{ x \in \RR^{n+1}\,\, \Big|\,\,  \sum_{i=1}^{n+1} (\sigma_i |x_i|)^{p'} \le 1 \right\}.
\]	

\noindent
{\bf Case 1. $2\le p <\infty$.}
 Then $1 < p' \le 2$ and by Lemma~\ref{geom-p-ell} together with (\ref{p-dual}),
$$
  \rad(({\cal E}_p \cap F)^\circ) =\rad(P_F({\cal E}_p)^\circ)=
  \rad(P_F {\cal E}_{p',1/\sigma})= 1/\sigma_{n+1}
$$
and
\[
\ell({\cal E}_p \cap F) = \ell(P_F {\cal E}_p )
\le \sqrt p
\left(\sum_{i=1}^{n+1} \frac{1}{\sigma_i^p} \right)^{1/p}.
\]
A straightforward application of Theorem \ref{LowerThm} gives the announced result.

\smallskip

\noindent
{\bf Case 2. $1 < p < 2$.}
Then $2 < p'<\infty$.
Lemma~\ref{geom-p-ell} together with (\ref{p-dual}) implies
\[
   \rad(({\cal E}_p \cap F)^\circ)=\rad(P_F {\cal E}_{p',1/\sigma})=
   \left(\sum_{i=1}^{n+1} \sigma_i^{-2p'/(p'-2)} \right)^{1/2-1/p'}
\]
and
\[
\ell({\cal E}_p \cap F) = \ell(P_F {\cal E}_p )
\le
\left(\sum_{i=1}^{n+1} \frac{1}{\sigma_i^p} \right)^{1/p}.
\]
Since $2p'/(p'-2) = 2p/(2-p)$, a straightforward application of Theorem \ref{LowerThm} completes this case.

\smallskip

\noindent
{\bf Case 3. $p=1$.}
This case is obtained similarly.
\qed

\subsection{Proof of Corollary~\ref{LowerEllpThm}}

To apply Theorem~\ref{LowerGeneralThm} we choose $E\subset \RNN$ to be the subspace of all vectors having first
$k$ coordinates equal to $0$, in particular $E^\perp=\RR^k$.  We clearly have
$P_{E^\perp} {\cal E}_p = {\cal E}_p\cap E^\perp$ and $P_E {\cal E}_p = {\cal E}_p\cap E$,
therefore $\delta_{E, K}=1$. By Lemma~\ref{geom-p-ell},
\begin{equation*}
     R\left( {\cal E}_p\cap E^\perp\right) =R\left( P_{E^\perp} {\cal E}_p\right)=
	\left\{
	\begin{array}{ll} \sigma_1 \qquad &  \textrm{when} \ 1\le  p \le 2
		\\
	\displaystyle	\left(\sum_{i=1}^k \, \sigma_i^{\frac{2p}{p-2}} \right)^{1/2 - 1/p} \qquad &  \textrm{when} \ 2<p <\infty
        \\
       \displaystyle \left(\sum_{i=1}^k \, \sigma_i^2 \right)^{1/2} \qquad &  \textrm{when} \ p =\infty
	\end{array}
	\right.\ ,
\end{equation*}
\begin{equation*}
    R\left( P_{E} {\cal E}_p\right)=
	\left\{
	\begin{array}{ll} \sigma_{k+1} \qquad &  \textrm{when} \ 1\le  p \le 2
		\\
	\displaystyle	\left(\sum_{i>k} \, \sigma_i^{\frac{2p}{p-2}} \right)^{1/2 - 1/p} \qquad &  \textrm{when} \ 2<p <\infty
        \\
      \displaystyle  \left(\sum_{i>k} \, \sigma_i^2 \right)^{1/2} \qquad &  \textrm{when} \ p =\infty
	\end{array}
	\right.
\end{equation*}
and
\begin{equation*}
\ell_*(P_E \, {\cal E}_p)=\ell(P_E \, {\cal E}_{p', 1/\sigma})\geq
\sqrt{\frac{2}{\pi}}
\left(\sum_{i>k}\sigma_i^{p'}\right)^{\frac{1}{p'}}
\end{equation*}
whenever $1<p\leq \infty$, while for $p=1$
$$
 \ell_*(P_E \, {\cal E}_\infty)=\ell(P_E \, {\cal E}_{p', 1/\sigma}) \geq c \,
 \max_{i>k } {\sigma_{i}} \sqrt{\log(2 (i-k))}\, ,
$$
where $c>0$ is an absolute constant.
Since $8\sqrt{\frac{2}{\pi}} > 6$, a direct application of Theorem~\ref{LowerGeneralThm} implies the result.
\qed

\section{Discussion and examples}
\label{sec:example}
In \cite{HKNPU}, the authors study the random radius of sections of an ellipsoid, which is the case $p=2$.
In particular they proved the following theorem.

\medskip
\noindent
\textbf{Theorem A.} \  \cite[Theorem~4]{HKNPU}
{\it For every $s,t  >1$ and  every $1 \le n < N$, one has
\begin{equation}
	\label{thmA}
{\cal R}_n({\cal E}_2)
\le
14 s n \left( \sum_{j >  n} \sigma_j^{2} \right)^{1/2}
\end{equation}
with probability at least  $1 - e^{- c^2 n}-c\sqrt{2e} /s$.
}

\medskip


By choosing $k=n$ in  Corollary~$\ref{cor:lp-ellipsoid}$, we obtain a better estimate
by a factor of $\sqrt{n}\,$:
$$
  {\cal R}_n({\cal E}_2) \le  C s \sqrt{n}  \left( \sum_{j >  n} \sigma_j^{2} \right)^{1/2}
$$
with probability at least  $1 - e^{- \gamma n} - (C\log s)/s$. We would like to note that
it does seem that the proof in \cite{HKNPU} can be adjusted to get
$$
 {\cal R}_n({\cal E}_2 ) \le   C\left( \frac{n}{n-k +1} \, \sigma_{k+1} +
   \frac{ \sqrt n}{n-k+1} \ \left(\sum_{i>k} \sigma_i^{2}\right)^{1/2}\right)
$$
(with a worse probability), which  is worse than the bound from Corollary~\ref{cor:lp-ellipsoid}
by an extra factor of $\sqrt{n/(n-k +1)}$ in front of $\sigma_{k+1}$ (this could lead to a worse bound when the infimum
over $k$ is attained at $k$ close to $n$).

\smallskip

Another natural approach was developed by the authors of \cite{HKNPU, HPS}.
Using the technique of the  low $M^*$-estimate (see Remark~\ref{M*}),
they proved the following theorem.

\medskip
\noindent
\textbf{Theorem B.} \ \cite[Theorem A]{HPS}
{\it There exist  constants $0<\tilde{c} <1<\widetilde{C}$ and $\gamma>0$ such that for every $1 \le n < N$ and every $p>1$,
\begin{equation}
	\label{thmA}
{\cal R}_n({\cal E}_p)
\le
\widetilde{C}
\sqrt{\frac{p'}{n}} \left( \sum_{j > \tilde{c} \, n} \sigma_j^{p'} \right)^{1/p'}
\end{equation}
with probability at least  $1 - e^{- \gamma n}$.
}

\medskip

For $p \ge 2$, let $\tilde{c}$ be the constant appearing in Theorem~B and choose $k = c' n$ with $c' > \tilde{c}$.
Then Corollary~\ref{cor:lp-ellipsoid} implies
\[
{\cal R}_n({\cal E}_p)
\le C_\eps \left( \sum_{j > c' \, n} \sigma_j^{\frac{2p}{p-2}} \right)^{1/2-1/p}
+
\frac{1}{\sqrt{n}} \left( \sum_{j > n} \sigma_j^{p'} \right)^{1/p'}.
\]
Let $\alpha = c' - \tilde{c}$. Since $\sigma$ is a decreasing sequence, a decomposition of the sums in blocks of size $\alpha n$ gives  that
\[
\frac{1}{\sqrt{n}} \left( \sum_{j > \tilde{c} \, n} \sigma_j^{p'} \right)^{1/p'}
\ge
\sqrt{\alpha} \left( \sum_{j > c' \, n} \sigma_j^{\frac{2p}{p-2}} \right)^{1/2-1/p}.
\]
Therefore the upper bound given by Corollary~\ref{cor:lp-ellipsoid} is always smaller (up to a constant) than the one given by
Theorem~B. However, if $p<2$, the two estimates are not comparable. For example (see below), in the case of a sequence $\{\sigma_i\}_{i=1}^N$ of polynomial decay, the upper bound from Theorem~B is better, while for exponential decay, the upper bound from Corollary~\ref{cor:lp-ellipsoid} leads to a  better (and sharp) estimate.

To illustrate the strength of our results, we apply them to three types of examples in detail. We start by considering the case of a sequence $\{\sigma_j\}_j$ with exponential decay
by taking $\sigma_j = q^j$ for some fixed $0<q<1$. While the classical
``low $M^*$-estimate" does not give any satisfactory
answer (see \cite{GiaMT2005} and Example~2.2 in \cite{GianMil1997}), Corollaries~\ref{cor:lp-ellipsoid} and \ref{cor:lp-ellipsoid-lower} imply a complete answer to this problem. This complements the work in \cite{HKNPU},
where the authors obtained only partial results in the case of ellipsoids (see Corollary~9 there).

In the second example, we study the case of polynomial
decay by considering $$\sigma_j = j^{-\alpha} (\log(j+1))^{-\beta}$$ for some parameters $\alpha>0$, $\beta\geq 0$.
 Such sequences appear naturally in Approximation Theory: in particular, they correspond to  the
approximation numbers of embeddings of Sobolev spaces into $L_q$ (we refer to  Chapter 4 of
\cite{DTU-CRM-Barcelona-2018} for the details).
 Our results illustrate the dichotomy that we emphasized in Theorem~\ref{cor:dichotomy}.
 Such a dichotomy (with corresponding bounds) was already established in the case $p=2$ in \cite{HKNPU}.
 We obtain bounds (that hold with   high probability) in the case of
 polynomial decay, quantifying the behavior of the random radius.
 Our bounds are  sharp for $p\ge 2$ and for most of the cases when $p< 2$. They improve upon results from \cite{HPS},
 where the authors considered only the case $\beta = 0$.  Propositions~\ref{caseII} and \ref{caseIII} provide
a complete answer to the conjecture  formulated in \cite{HPS},  where they asked if random information is useless or not in the cases
$1 < p < 2$, $\alpha = 1-1/p$, or
 $p>2$, $1/2< \alpha \le 1-1/p$.  We show that random information is indeed useless in these situations.

In the third example, we discuss cases where $\sigma_1=\sigma_2=...=\sigma_m$ and $\sigma_{m+1}=...=\sigma_N$ for some $m$, and
 $\sigma_1/\sigma_N$ is sufficiently large.

 We conclude this section by proving Theorem~\ref{cor:dichotomy}.

\subsection{Exponential decay of the sequence $\sigma$}
Let $q \in \, (0,1)$ and $\sigma = \{ \sigma_j\}_{j=1}^N$ be such that $\sigma_j = \sigma_1 \, q^j$ for all $j \ge 1$. We set $p \in (1, +\infty)$.
We say that $a \lesssim b$ if there exists a constant $C_{p, q} > 0$ depending only on $p$ and $q$ such that
$a \le C_{p, q} \, b$. We say that $a \gtrsim b$ if $b \lesssim a$, and that $a \approx b$ if $a \lesssim b$ and $b \lesssim a$.

\begin{proposition}
	\label{caseGeom}
 Let $\delta >0$ be small enough. Then for every $c \log(1/\delta) \le n \le N$ (where $c$ is a universal constant),  we have
\[
\P\left(\frac{1}{\sqrt{\log(1/\delta)}} \, \sqrt{n} \, \sigma_{n+1}
	\lesssim
	{\cal R}_n({\cal E}_p)
	\lesssim
	\frac{\sqrt{\log(1/\delta)}}{\delta}  \, \sqrt{n} \, \sigma_{n+1} \right) \ge 1 - 2 \delta - n e^{- \gamma n}.
\]
\end{proposition}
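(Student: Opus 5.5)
The plan is to get the upper bound from Corollary~\ref{cor:lp-ellipsoid} with $k=n$ and the lower bound from Corollary~\ref{cor:lp-ellipsoid-lower} with the subspace $F=\RR^{n+1}$. In both cases the geometric sums are dominated by their extreme terms, so every quantity collapses to a multiple of $\sigma_{n+1}$, with constants depending only on $p$ and $q$.

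\emph{Upper bound.} Take $k=n$ and $\eps=\delta/(3C)$. We may assume $\delta$ is small enough that $\eps\le 1/(2C)$ and that $\log(1/\eps)\approx\log(1/\delta)$. Then the failure probability is at most $3(C\eps)^{1}+ne^{-\gamma n}=\delta+ne^{-\gamma n}$.

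For $p>2$ the first term of Corollary~\ref{cor:lp-ellipsoid}(1) equals
\[
\frac{8\sqrt{\log(1/\eps)}}{\eps}\sqrt{2n}\Big(\sum_{i>n}\sigma_i^{2p/(p-2)}\Big)^{1/2-1/p}.
\]
Since $\sigma_i=\sigma_1q^i$, the sum is a geometric series and the bracket is $\approx\sigma_{n+1}$. The second term is $\frac{2\sqrt n}{\eps}(\sum_{i>n}\sigma_i^{p'})^{1/p'}\approx \frac{\sqrt n}{\delta}\sigma_{n+1}$, again because the tail of a geometric series is comparable to its first term.

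For $1<p\le2$, part (2) of the corollary gives $\frac{\sqrt{\log(1/\eps)}}{\eps}\sqrt{2n}\,\sigma_{n+1}+\frac{\sqrt{p'n}}{\eps}(\sum_{i>n}\sigma_i^{p'})^{1/p'}$, which is of the same order. In every case we get ${\cal R}_n\lesssim \frac{\sqrt{\log(1/\delta)}}{\delta}\sqrt n\,\sigma_{n+1}$.

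\emph{Lower bound.} Apply Corollary~\ref{cor:lp-ellipsoid-lower} with $t=\sqrt{2\log(1/\delta)}$. The failure probability is then $e^{-n/8}+\delta\le 2\delta$, using $n\ge c\log(1/\delta)$ with $c=8$. Here the sums are dominated by their last terms:
\[
\Big(\sum_{i\le n+1}\sigma_i^{-p}\Big)^{1/p}\approx \sigma_{n+1}^{-1},
\]
since $\sigma_i^{-p}$ grows geometrically in $i$. Likewise, for $p<2$, $\big(\sum_{i\le n+1}\sigma_i^{-2p/(2-p)}\big)^{1/p-1/2}\approx\sigma_{n+1}^{-1}$. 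For $p\ge2$ the second denominator term is exactly $t/\sigma_{n+1}$. Hence the denominator is $\lesssim (1+t)/\sigma_{n+1}\lesssim \sqrt{\log(1/\delta)}/\sigma_{n+1}$, which gives ${\cal R}_n\gtrsim \sqrt n\,\sigma_{n+1}/\sqrt{\log(1/\delta)}$.

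\emph{Conclusion.} A union bound over the two events gives total failure probability at most $\delta+ne^{-\gamma n}+2\delta$. To match the stated $2\delta$, rescale $\delta$ by a constant (say use $\delta/2$ in the upper bound); this only changes constants. The only real care needed is this bookkeeping: reconciling $\eps$ with $\delta$, checking that the constants depend only on $p$ and $q$, and covering the range of $\delta$ through the condition $n\ge c\log(1/\delta)$. There is no substantive obstacle; the geometric-sum comparisons are routine.
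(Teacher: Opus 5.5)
Your proposal is correct and is essentially the paper's proof. You use Corollary~\ref{cor:lp-ellipsoid} with $k=n$ and $\eps\approx\delta$ for the upper bound, Corollary~\ref{cor:lp-ellipsoid-lower} (i.e.\ $F=\RR^{n+1}$) with $t\approx\sqrt{\log(1/\delta)}$ for the lower bound, and compare each geometric sum with its extreme term. The only slip is in the final bookkeeping: using $\delta/2$ only in the upper bound still leaves failure probability $\tfrac52\delta+ne^{-\gamma n}$. It is fixed by also requiring $e^{-n/8}\le\delta/2$ (enlarge $c$), or by taking $t=2\sqrt{\log(1/\delta)}$ as the paper does, so that $e^{-t^2/2}=\delta^2$.
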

\noindent
\begin{proof}
We have $R({\cal E}_p) \approx \sigma_1.$
For the upper bound, we set $k = n$ and $\eps \approx \delta$, and apply Corollary~\ref{cor:lp-ellipsoid}. Obviously
\[
\left(\sum_{i>n} \sigma_i^{\frac{2p}{p-2}}\right)^\frac{p-2}{2p} \approx \sigma_{n+1}
\
\textrm{and}
\
\left(\sum_{i>n} \sigma_i^{p'}\right)^{1/p'} \approx \sigma_{n+1} .
\]
We deduce that for every $n < N$,  with probability greater than $1 - \delta - ne^{- \gamma n}$,
\[
{\cal R}_n({\cal E}_p)
\lesssim
\frac{\sqrt{\log(1/\delta)}}{\delta}  \, \sqrt{n} \, \sigma_{n+1}.
\]
For the lower bound, we set $t = 2 \sqrt{\log(1/\delta)}$ and apply Corollary~\ref{cor:lp-ellipsoid-lower}. Since
\[
\bigg( \sum_{i=1}^{n+1} \frac{1}{\sigma_i^p}\bigg)^{1/p} \approx \frac{1}{\sigma_{n+1}}
\quad
\mathrm{and}
\quad
\bigg(\sum_{i=1}^{n+1} \frac{1}{\sigma_i^{2p/(2-p)}}\bigg)^{1/p - 1/2} \approx \frac{1}{\sigma_{n+1}},
\]
we deduce that for every $n < N$,  with probability greater than $1 - e^{-n/8} - \delta$,
\[
{\cal R}_n({\cal E}_p)
\gtrsim
\frac{1}{\sqrt{\log(1/\delta)}} \, \sqrt{n} \, \sigma_{n+1}. \qedhere
\]
\end{proof}

\subsection{Polynomial decay of the sequence $\sigma$}
Let $\alpha>0$ and $\sigma = \{ \sigma_j\}_{j=1}^N$ be such that $\sigma_j = j^{-\alpha} \log^{-\beta}(j+1)$ for all $j \ge 1$. We focus on the case $p \in (1, +\infty)$, even though the computations for p=1 are also tractable.
Again, we use the notation $a \lesssim b,  a \approx b$ with the constants this time possibly depending on $\alpha$, $\beta$ and $p$.

For such sequences $\sigma$, it is possible to evaluate the parameters that come up in the study of high dimensional convex bodies, like the radius, $\ell$, $\ell_*$ or the Dvoretzky dimension.
By Lemma~\ref{geom-p-ell}, for every $p \in (1, +\infty)$, one has
\[
	\ell ({\cal E}_p) \approx  \sqrt{p} \, N^{1/p + \alpha} (\log(N+1))^{\beta},
	\quad
	\ell_*({\cal E}_p) \approx \left( \sum_{j=1}^N \sigma_j^{p'} \right)^{1/p'},
\]
while
\[
 \left\{
 \begin{array}{rll}
d_{{\cal E}_p} \approx	N, & R({{\cal E}_p}) \approx 1 & \mathrm{if} \ p < 2
	\\
d_{{\cal E}_p} \approx	p N^{2/p}, & R({{\cal E}_p}) \approx \left(\sum_j \sigma_j^{2p/(p-2)}\right)^{1/2 - 1/p} & \mathrm{if} \ p \ge 2
\end{array}
\right.\ .
\]
If $\alpha > 1/p'$, the series
$\sum \sigma_j^{p'}$ converges, hence $\ell({\cal E}_p) \ell_*({\cal E}_p) / N \gtrsim N^{\alpha - 1/p'} (\log(N+1))^{\beta}$. This means that
${\cal E}_p$ is not in a good position compared to what is called $M$-position \cite{Pisier1989}.
As in \cite{HPS}}, our result states that,
if $\sum \sigma_j^{p'}$ converges, then the random radius
${\cal R}_n({\cal E}_p)$ decays polynomialy fast to $0$ with $n$. We prove an exact rate of polynomial decay with explicit logarithmic factors.
The precise statements are given in Propositions  \ref{caseI} and \ref{caseIbis}.
In sharp contrast,
if $\sum \sigma_j^{p'}$ diverges, then randomness cannot help, and this can be abstractly confirmed as follows: in Theorem \ref{cor:dichotomy} we provide a function $f$ (depending on $\sigma$) going to infinity at infinity such that
for all $n \le f(N)$, ${\cal R}_n({\cal E}_p)$ is lower bounded by a constant
(and therefore cannot decay to zero). This is also explicitly corroborated in Propositions \ref{caseIII}
to \ref{caseIV} for $\sigma$ of polynomial decay.

A final remark is that for all $p \ge 2$, we prove sharp estimates on the random radius in any case. On the other hand, when $p \in (1,2)$, then in some cases the estimates we can obtain are not as sharp as we would like,
but the difference in the form of the upper and lower bound is rather small that the conclusion about the general behavior of the
random radius is still valid.

\smallskip
We start with a general lower bound on the random radius. It is consistent with the well-known fact that we recalled in the introduction too, that for very large codimensions, $n > N - d_{{\cal E}_p}$, the value of the random radius
is given by the Dvoretzky Theorem, and it is proportional to $\sqrt N / \ell({\cal E}_p)$.

\begin{lemma}[General lower bound]
	\label{lem:lowerbound-pol}
	For every $n < N$, one has
	\[
	{\cal R}_n ( {\cal E}_p)
	\gtrsim
	n^{1/2 - 1/p - \alpha} (\log(n+1))^{-\beta},
	\]	
	with probability greater than $1- 2e^{- \gamma n}$ if $p<2$ or greater than $1 - 2e^{-\gamma p n^{2/p}}$ if $p \ge 2$.
\end{lemma}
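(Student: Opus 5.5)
Take $F=\RR^{n+1}$, the span of the first $n+1$ coordinates, and apply Theorem~\ref{LowerThm} in its Dvoretzky form, Corollary~\ref{cor:low-dvor}, with $s$ a constant. The section ${\cal E}_p\cap F$ is the $p$-ellipsoid with semi-axes $\sigma_1,\dots,\sigma_{n+1}$. Since $\sigma$ is decreasing, these are all at least $\sigma_{n+1}$.

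\textbf{Estimating $\ell$.} By Lemma~\ref{geom-p-ell},
\[
\ell({\cal E}_p\cap F)\lesssim \Big(\sum_{i\le n+1}\sigma_i^{-p}\Big)^{1/p}.
\]
For $\sigma_i=i^{-\alpha}\log^{-\beta}(i+1)$ each term satisfies $\sigma_i^{-p}\le (n+1)^{\alpha p}\log^{\beta p}(n+2)$. Hence the sum is at most $(n+1)^{1+\alpha p}\log^{\beta p}(n+2)$, and
\[
\ell({\cal E}_p\cap F)\lesssim n^{1/p+\alpha}(\log(n+1))^{\beta}.
\]
Corollary~\ref{cor:low-dvor} then gives
\[
{\cal R}_n({\cal E}_p)\gtrsim \sqrt n\,\big/\,\ell({\cal E}_p\cap F)\gtrsim n^{1/2-1/p-\alpha}(\log(n+1))^{-\beta},
\]
which is the claimed bound.

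\textbf{Probability.} The failure probability is $e^{-n/8}+\exp(-s^2\ell^2/(2b^2))$, where $\ell=\ell({\cal E}_p\cap F)$ and $b=b_{{\cal E}_p\cap F}=R(({\cal E}_p\cap F)^\circ)$. So we need a lower bound on the Dvoretzky dimension $(\ell/b)^2$ of the section. Lemma~\ref{geom-p-ell} gives $\ell\ge\sqrt{2/\pi}\,(\sum_{i\le n+1}\sigma_i^{-p})^{1/p}$. The polar is the $p'$-ellipsoid with semi-axes $1/\sigma_i$, so Lemma~\ref{geom-p-ell} also computes $b$:
\begin{itemize}
\item For $p\ge2$ we have $p'\le 2$, so $b=1/\sigma_{n+1}$. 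For polynomial decay, the top half of the indices already gives $\sum_{i\le n+1}\sigma_i^{-p}\gtrsim n\,\sigma_{n+1}^{-p}$. Therefore $\ell/b\gtrsim n^{1/p}$ and $(\ell/b)^2\gtrsim n^{2/p}$. The dependence on $p$ in the constant has to be tracked to obtain the stated $e^{-\gamma p n^{2/p}}$; choosing $s\approx\sqrt p$ should absorb it.
\item For $p<2$ we have $p'>2$, so $b=(\sum_{i\le n+1}\sigma_i^{-r})^{1/r}$ with $1/r=1/p-1/2$. Since $\sigma_i^{-1}$ is essentially of order $(n+1)^{\alpha}$ on a constant fraction of the indices, $\ell\approx n^{1/p+\alpha}\log^\beta$ and $b\approx n^{1/p-1/2+\alpha}\log^\beta$. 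Thus $(\ell/b)^2\approx n$, and the exponential terms together are bounded by $2e^{-\gamma n}$.
\end{itemize}

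\textbf{Main obstacle.} The only step needing care is the regularity of the polynomially decaying sequence. One needs $\sigma_i\approx\sigma_{n+1}$ for $i\in[n/2,n+1]$, uniformly in $n$ up to constants depending on $\alpha,\beta$. This holds because $i^{-\alpha}\log^{-\beta}(i+1)$ changes by at most a bounded factor over dyadic ranges. Once that is in place, all comparisons are routine.
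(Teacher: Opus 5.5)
Your proof is correct. It rests on the same tool as the paper's proof, namely Theorem~\ref{LowerThm} applied with $F=\RR^{n+1}$, but you choose the deviation parameter differently, and that changes what you have to verify. You use the Dvoretzky form (Corollary~\ref{cor:low-dvor}, i.e.\ $t=s\,\ell/b$). The paper instead invokes Corollary~\ref{cor:lp-ellipsoid-lower} with a fixed $t$: $t=\sqrt n$ for $p<2$ and $t=\sqrt p\,n^{1/p}$ for $p\ge 2$.

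With $t$ fixed, the failure probability $e^{-n/8}+e^{-t^2/2}$ is explicit. The lower bound $\sqrt n/\bigl(2(\ell({\cal E}_p\cap F)+t\,R(({\cal E}_p\cap F)^\circ))\bigr)$ then only requires \emph{upper} bounds on $\ell({\cal E}_p\cap F)$ and $R(({\cal E}_p\cap F)^\circ)$, and these follow from monotonicity alone. Indeed, $\sigma_i\ge\sigma_{n+1}$ for $i\le n+1$ gives $\ell({\cal E}_p\cap F)\lesssim (n+1)^{1/p}/\sigma_{n+1}$ and $t\,R(({\cal E}_p\cap F)^\circ)\lesssim (n+1)^{1/p}/\sigma_{n+1}$ in both ranges of $p$. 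Hence ${\cal R}_n({\cal E}_p)\gtrsim n^{1/2-1/p}\,\sigma_{n+1}$ for \emph{every} non-increasing $\sigma$, and the polynomial form enters only through the value of $\sigma_{n+1}$.

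So the dyadic regularity you single out as the main obstacle is avoidable. You need it only because the probability term in Corollary~\ref{cor:low-dvor} calls for a \emph{lower} bound on the Dvoretzky dimension $(\ell/b)^2$ of the section, hence a lower bound on $\ell$. Your verification of that regularity is fine, so your argument goes through. What your route buys is the interpretation of the exponents $n$ and $n^{2/p}$ as the Dvoretzky dimension of ${\cal E}_p\cap F$; the paper's choice is shorter and more general.

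One caveat applies to both versions: absorbing $e^{-n/8}$ into $2e^{-\gamma p n^{2/p}}$ when $p>2$ and $n$ is small relative to $p$ forces $\gamma$ to depend on $p$. This is a feature of the statement, not a defect of your argument.
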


\noindent
{\bf Proof.}
One has
\[
\left(\sum_{i=1}^{n+1} \sigma_i^{-p} \right)^{1/p} \approx n^{1/p} \frac{1}{\sigma_{n+1}} \approx n^{1/p + \alpha} (\log(n+1))^{\beta}
\]
and for $p < 2$,
\[
\left(\sum_{i=1}^{n+1} \sigma_i^{-2p/(2-p)} \right)^{1/p - 1/2} \approx n^{1/p - 1/2} \frac{1}{\sigma_{n+1}} \approx n^{1/p -1/2 + \alpha} (\log(n+1))^{\beta}.
\]
An application of Corollary \ref{cor:lp-ellipsoid-lower} with $t=\sqrt{n}$ when $1<p < 2$ or $t = \sqrt{p}\,n^{1/p}$ when $p\geq 2$ gives the result.
\qed

\bigskip
Let $p \in (1, \infty)$.
We divide the region $\alpha >0, \beta \in \RR$  into 7 regions, which leads to the statement of 7 propositions.  These regions are delimited by the behavior of the series $\sum \sigma_j^{p'}$ and  $\sum \sigma_j^{2p/(p-2)}$, that control the diameter of
${\cal E}_p$ when $p >2$. In all propositions, $\gamma$ is an absolute positive constant
(mainly coming from the probabilistic estimate in Theorem~\ref{MainThm}); let us also reiterate that in all these propositions the bounds that we obtain for $p\ge 2$ are sharp.

\subsubsection{Good decay of the random radius}
\label{Good}
In the first two cases, the series  $\sum \sigma_j^{p'}$ converges. We establish sharp decay of the random radius, with
overwhelming probability.
\begin{proposition}
	\label{caseI}
	Assume that $\alpha  > 1- 1/p$ and $\beta \in \RR$. Then
	\[
	 \forall n < N, \
	\P \bigg( {\cal R}_n ( {\cal E}_p ) \approx n^{1/2- 1/p -\alpha} \log^{-\beta}(n+1) \bigg)
	\ge
	\left\{
	\begin{array}{ll}
		1 - 2 e^{-\gamma n} & \ \mathrm{if} \ p \le 2
		\\
		1 - 2 e^{-\gamma p n^{2/p}} & \ \mathrm{if} \ p > 2
	\end{array}
	\right..
	\]
\end{proposition}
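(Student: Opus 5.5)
The lower bound is exactly Lemma~\ref{lem:lowerbound-pol}, which holds for every $\alpha>0$ with the stated probabilities. So only the matching upper bound ${\cal R}_n({\cal E}_p)\lesssim n^{1/2-1/p-\alpha}\log^{-\beta}(n+1)$ needs proof, with probability at least $1-e^{-\gamma n}$ (which is better than both stated probabilities, up to adjusting $\gamma$). For this I will apply Corollary~\ref{cor:lp-ellipsoid} with $k=\lfloor n/2\rfloor$ (or $k=1$ for very small $n$) and $\eps$ a fixed small absolute constant. Then $n-k+1\approx n$, and the probability $1-3(C\eps)^{n-k+1}-ke^{-\gamma n}$ is at least $1-e^{-\gamma' n}$ after choosing $\eps$ small enough and absorbing the factor $k\le n$ into the exponent. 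Finitely many small $n$ are absorbed into the constants, since ${\cal R}_n\le R({\cal E}_p)\lesssim 1$ there.

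\textbf{Case $p\ge2$.} The first term of Corollary~\ref{cor:lp-ellipsoid}(1) is $\lesssim(\sum_{i>n/2}\sigma_i^{2p/(p-2)})^{1/2-1/p}$. Because $\alpha>1-1/p>1/2-1/p$, the exponent satisfies $\alpha\cdot\frac{2p}{p-2}>1$, so this tail sum converges and is comparable to $n\,\sigma_n^{2p/(p-2)}$. The term is therefore $\approx n^{1/2-1/p}\sigma_n$. The second term is $\frac{1}{\sqrt n}(\sum_{i>n}\sigma_i^{p'})^{1/p'}$. Since $\alpha p'>1$, the tail satisfies $(\sum_{i>n}\sigma_i^{p'})^{1/p'}\approx n^{1/p'}\sigma_n$, and the term is $\approx n^{1/p'-1/2}\sigma_n=n^{1/2-1/p}\sigma_n$. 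Both terms thus equal $n^{1/2-1/p-\alpha}\log^{-\beta}(n+1)$ up to constants.

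\textbf{Case $1<p<2$.} Use Corollary~\ref{cor:lp-ellipsoid}(2). The first term is $\lesssim\sigma_{k+1}\approx\sigma_n$. The second term is $\lesssim\frac{1}{\sqrt n}(\sum_{i>n/2}\sigma_i^{p'})^{1/p'}\approx n^{-1/2+1/p'}\sigma_n=n^{1/2-1/p}\sigma_n$. Since $1/2-1/p<0$, the first term $\sigma_n$ is larger than the target $n^{1/2-1/p}\sigma_n$, so taking $k\approx n/2$ alone does not work.

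To fix this, I take $k$ close to $n$ with $k\le n$, because then $\sqrt{n/(n-k+1)}$ enters the first term. Choose $k=n-m+1$, so that $n-k+1=m$. The bound becomes roughly
\[
\sqrt{n/m}\,\sigma_{n-m}+\frac{\sqrt n}{m}\Big(m^{1/p'}\sigma_{n-m}+n^{1/p'}\sigma_n\Big).
\]
With $m\approx n/2$ this gives $\sigma_n+n^{1/2-1/p}\sigma_n$, and smaller $m$ only makes things worse. So Corollary~\ref{cor:lp-ellipsoid} cannot give the target, and the right tool is Theorem~B. With $\tilde c n$ blocks and $\alpha p'>1$, it gives
\[
{\cal R}_n\lesssim n^{-1/2}\Big(\sum_{j>\tilde c n}\sigma_j^{p'}\Big)^{1/p'}\approx n^{-1/2+1/p'}\sigma_n=n^{1/2-1/p}\sigma_n,
\]
with probability $1-e^{-\gamma n}$. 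This is consistent with the paper's remark that Theorem~B is better for polynomial decay when $p<2$.

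The main obstacle is this $p<2$ regime: one has to recognise that Corollary~\ref{cor:lp-ellipsoid} loses there and switch to Theorem~B. After that, the remaining work is the routine tail estimates $\sum_{i>m}i^{-\alpha s}\log^{-\beta s}(i+1)\approx m^{1-\alpha s}\log^{-\beta s}m$ for $\alpha s>1$, which I would verify by comparing with integrals.
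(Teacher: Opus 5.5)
Your proof is correct. The lower bound is handled exactly as in the paper, via Lemma~\ref{lem:lowerbound-pol}. For the upper bound, however, the paper does not split into cases. It applies Theorem~B for every $p\in(1,\infty)$, using only that $\alpha p'>1$ gives $\sqrt{p'/n}\,\bigl(\sum_{j>\tilde c n}\sigma_j^{p'}\bigr)^{1/p'}\lesssim n^{1/2-1/p-\alpha}\log^{-\beta}(n+1)$, and this holds directly with probability $1-e^{-\gamma n}$.

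You instead use Corollary~\ref{cor:lp-ellipsoid} with $k\approx n/2$ for $p\ge 2$, and Theorem~B only for $p<2$. Your computation for $p>2$ is right: both terms are $\approx n^{1/2-1/p}\sigma_n$, because $\alpha\cdot\frac{2p}{p-2}>1$ and $\alpha p'>1$. This has the mild merit of showing that for $p\ge 2$ the paper's own Theorem~\ref{MainThm} suffices, without the low $M^*$-estimate behind Theorem~B. That agrees with the paper's remark after Theorem~B that for $p\ge2$ Corollary~\ref{cor:lp-ellipsoid} is never worse than Theorem~B.

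The route does have costs. It needs an extra tail estimate for the $\frac{2p}{p-2}$-series. Its probability bound $1-3(C\eps)^{n-k+1}-ke^{-\gamma n}$ is less clean, forcing you to fix $\eps$ and treat small $n$ separately. And Theorem~B is unavoidable for $p<2$ anyway, as you correctly observe: the first term of the Corollary is always $\gtrsim\sigma_n$. So using Theorem~B uniformly, as the paper does, is simply shorter.

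One small point: at $p=2$, part (1) of Corollary~\ref{cor:lp-ellipsoid} is not available. You need part (2), which gives $\approx\sigma_n$ for both terms. This still matches the target because $1/2-1/p=0$; alternatively, just use Theorem~B there as well.
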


\medskip
\noindent
{\bf Proof.}  As $\alpha > 1-1/p$, $R({\cal E}_p) \approx 1$.
 Theorem~B implies  that for all $n < N$,  with probability greater than $1 - e^{-\gamma n}$,
\[
 {\cal R}_n ( {\cal E}_p)
\lesssim
\sqrt{\frac{p'}{n}} \left( \sum_{j > \tilde{c} \, n} \sigma_j^{p'} \right)^{1/p'}
\lesssim
n^{1/2- 1/p -\alpha} \log^{-\beta}(n+1)\,.
\]

The lower bound follows from Lemma \ref{lem:lowerbound-pol}.
Thus we  proved a tight bound on the decay of the random radius.
\qed

\begin{proposition}
	\label{caseIbis}
Assume that $\alpha = 1 - 1/p$ and that $\beta > 1 - 1/p$.
\\
If $p \ge 2$ then for all $n < N$, one has
\[
{\cal R}_n ( {\cal E}_p ) \approx n^{-1/2} \log^{-\beta}(n+1) \cdot \left(\max\left\{1, \, \log(N/n)  \frac{\log n}{\log N}\right)\right\}^{1-1/p} ,
\]
with probability at least $1 - 3e^{-\gamma n}$ if $n \lesssim N$, and  at least  $1 -e^{-\gamma n} -2 e^{-\gamma p n^{2/p}}$
for the remaining values of $n$.
\\
If $ p<2$ then for all $n < N,$ one has
\[
n^{-1/2} \log^{-\beta}(n+1) \lesssim {\cal R}_n ( {\cal E}_p )
\lesssim
n^{-1/2} \log^{-\beta}(n+1) \cdot \left(\max\left\{1, \, \log(N/n)  \frac{\log n}{\log N}\right\}\right)^{1-1/p} ,
\]
with probability at least  $1 - 2 e^{-\gamma n}$.
\end{proposition}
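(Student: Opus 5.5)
Throughout, $\sigma_j = j^{-(1-1/p)}\log^{-\beta}(j+1)$, so that $\sigma_j^{p'} = j^{-1}\log^{-\beta p'}(j+1)$. Since $\beta p'>1$, the series $\sum_j \sigma_j^{p'}$ converges. The basic tail estimate I will use is
\[
\sum_{j>m}\sigma_j^{p'} \approx (\log m)^{1-\beta p'},
\qquad\text{hence}\qquad
\Big(\sum_{j>m}\sigma_j^{p'}\Big)^{1/p'} \approx (\log m)^{1/p'-\beta}.
\]
The target quantity is $n^{-1/2}\log^{-\beta} n\cdot\max\{1,\log(N/n)\log n/\log N\}^{1/p'}$. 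The plan is to get the upper bound by combining Theorem~B with Corollary~\ref{cor:lp-ellipsoid}, and the lower bound by combining Lemma~\ref{lem:lowerbound-pol} with Corollary~\ref{LowerEllpThm}.

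\textbf{Upper bound.} I would apply Corollary~\ref{cor:lp-ellipsoid} with a fixed small $\eps$ and $k=cn$, where $c<1$. Then $n-k+1\approx n$, the failure probability is $e^{-\gamma n}$, and the second term becomes $n^{-1/2}(\sum_{i>n}\sigma_i^{p'})^{1/p'}\approx n^{-1/2}(\log n)^{1/p'-\beta}$.
\begin{itemize}
\item For $p\ge2$, the first term is $(\sum_{i>cn}\sigma_i^{2p/(p-2)})^{1/2-1/p}$. Here $\sigma_i^{2p/(p-2)}=i^{-2(p-1)/(p-2)}\cdots$ with exponent $>1$, so this term is $\approx n^{-1/2}\log^{-\beta}n$.
\item For $p<2$, the first term is $\sigma_{k+1}\approx n^{-1/p'}\log^{-\beta}n$, which is also at most $n^{-1/2}\log^{-\beta}n$.
\end{itemize}
This gives the upper bound $n^{-1/2}\log^{-\beta}n\cdot(\log n)^{1/p'}$.

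That bound is still too large when $N$ is close to $n$. But we live in $\RR^N$, so all sums are truncated at $N$. The correct second term is therefore
\[
n^{-1/2}\Big(\sum_{n<i\le N}\sigma_i^{p'}\Big)^{1/p'}
\approx n^{-1/2}\big((\log n)^{1-\beta p'}-(\log N)^{1-\beta p'}\big)^{1/p'}.
\]
I then need the elementary estimate
\[
(\log n)^{1-\beta p'}-(\log N)^{1-\beta p'}\approx (\log n)^{-\beta p'}\min\Big\{\log n,\ \log(N/n)\frac{\log n}{\log N}\Big\},
\]
which follows from the mean value theorem applied to $x^{1-\beta p'}$ at $x=\log n$ versus $x=\log N$, distinguishing $\log N\le 2\log n$ from $\log N>2\log n$. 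After multiplying out, the bound reads $n^{-1/2}\log^{-\beta}n\cdot\min\{\log n,\log(N/n)\log n/\log N\}^{1/p'}$. Adding the first term, which supplies the $\max\{1,\cdot\}$, gives exactly the stated upper bound. The computation in this paragraph is routine.

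\textbf{Lower bound.} Lemma~\ref{lem:lowerbound-pol} already yields $n^{-1/2}\log^{-\beta}n$ with the stated probabilities; this settles the case $p<2$ and the "$1$" branch of the maximum. For the case $p\ge2$ where $\log(N/n)\log n/\log N$ is large, I would apply Corollary~\ref{LowerEllpThm} with $k$ a small multiple of $n$, say $k=c'n$. Then $r_{k,p}\approx 1$, and the tail $(\sum_{k<j\le N}\sigma_j^{p'})^{1/p'}$ is computed by the same estimate as above. Two things must be checked:
\begin{itemize}
\item the hypothesis \eqref{eq:hypLower}, namely $(\text{tail})^{1/p'}\ge 8\sqrt n\,(\sum_{j>k}\sigma_j^{2p/(p-2)})^{1/2-1/p}\approx \log^{-\beta}n$, which amounts to the bracket being larger than a constant (I take $c'$ small to absorb constants);
\item that the output $\min\{1,\ n^{-1/2}(\text{tail})^{1/p'}/8\}$ equals the target, since the tail term is $\ll 1$.
\end{itemize}
The failure probability is $2e^{-2n}$. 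In the regime $n\gtrsim N$ (i.e.\ $N\le Cn$) the bracket is bounded, so only Lemma~\ref{lem:lowerbound-pol} is needed, which explains the two different probability statements.

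\textbf{Main obstacle.} The hard part is the hypothesis check of \eqref{eq:hypLower} at the boundary. When the bracket is only moderately large, the constant $8$ may fail. I would handle this by splitting into two cases: if the bracket is at most a large constant $C_0$, I use Lemma~\ref{lem:lowerbound-pol}, which loses only a constant factor; otherwise the hypothesis holds for $k=c'n$.
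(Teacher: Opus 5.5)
\textbf{Gap: your upper bound fails for $1<p<2$.} For $p<2$ you have $1/p'=1-1/p<1/2$, so $\sigma_{k+1}\approx n^{-1/p'}\log^{-\beta}n$ is \emph{larger} than $n^{-1/2}\log^{-\beta}n$, not smaller. Your claim that it is ``also at most $n^{-1/2}\log^{-\beta}n$'' is backwards.

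No other choice of $k$ repairs this. For every admissible $k\le n$, the first term of Corollary~\ref{cor:lp-ellipsoid}(2) is at least $\sqrt{n/(n-k+1)}\,\sigma_{k+1}\ge \sigma_{n+1}\approx n^{-1/p'}\log^{-\beta}n$. The target is at most $n^{-1/2}(\log n)^{1/p'-\beta}$, because $\phi_N(n):=\log(N/n)\log n/\log N\le \log n$. So Corollary~\ref{cor:lp-ellipsoid} misses the target by a factor of order $n^{1/p-1/2}$ up to logarithms. The paper points this out: for $p<2$ and polynomial decay, Theorem~B is the better bound.

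You name Theorem~B in your plan but never use it, and it is exactly what is needed here. The paper's upper bound, for all $p$, runs as follows. By Theorem~B, with probability at least $1-e^{-\gamma n}$,
${\cal R}_n({\cal E}_p)\lesssim \sqrt{p'/n}\,\bigl(\sum_{j>\tilde c n}\sigma_j^{p'}\bigr)^{1/p'}$.
The tail estimate $\bigl(\sum_{j=k+1}^N\sigma_j^{p'}\bigr)^{1/p'}\approx (\log k)^{-\beta}\phi_N(k)^{1/p'}$, together with $\phi_N(\tilde c n)\approx\max\{1,\phi_N(n)\}$, then gives the stated bound.

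\textbf{What is correct.} For $p\ge 2$, your route through Corollary~\ref{cor:lp-ellipsoid} with $k=cn$ is valid and is a genuine alternative to Theorem~B. The paper notes that for $p\ge 2$ this corollary is never worse than Theorem~B. Your first term supplies the ``$1$'' in the maximum, and the truncated tail computation supplies $\phi_N(n)^{1/p'}$.

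Your lower bound is essentially the paper's. You use Lemma~\ref{lem:lowerbound-pol} for $p<2$ and whenever $\phi_N(n)$ is bounded, and Corollary~\ref{LowerEllpThm} otherwise (the paper simply takes $k=n$).

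One small correction there. At $k=c'n$ the right-hand side of \eqref{eq:hypLower} is $\approx 8(c')^{-1/2}\log^{-\beta}n$, which \emph{grows} as $c'\to 0$. So taking $c'$ small does not absorb constants. This is harmless, because your split on whether $\phi_N(n)$ exceeds a large constant $C_0=C_0(c')$ already covers it.
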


\begin{remark}
Since for every $n \le \sqrt{N},$ $\log N \ge \log(N/n) \ge (\log N)/2$, one has for every $p  \ge 2$ and every $n < \sqrt{N}$,
\[
{\cal R}_n ( {\cal E}_p ) \approx n^{-1/2} (\log(n+1))^{-\beta} \cdot (\log(n+1))^{1 - 1/p}
\]
with overwhelming probability. 	This generalizes and quantifies  Corollary 7 from \cite{HKNPU} to all $p$-ellipsoids and  all dimensions $N$.
The important point is that we provide a strong lower bound and establish  the precise decay of the random radius
 for every $p \in (1, \infty)$.
\end{remark}

The following lemma is obtained by standard calculus tools.
\begin{lemma}
	\label{lem:computation1}
	For any $y > x > 1$, one has
	\[
	\int_{\log x}^{\log y} u^{- \lambda} du \approx
	\left\{
	\begin{array}{ll}
		\log(y/x) \cdot (\log(y))^{- \lambda}  & \quad \mathrm{if} \ \lambda < 1
		\\
		\log(y/x) \cdot (\log(x))^{- \lambda} \cdot \frac{\log(x)}{\log(y)}  & \quad \mathrm{if} \ \lambda > 1
	\end{array}
	\right..
	\]
	Let $1 \le k \le N$. Define
	\[
      \phi_N(k) =  \log(N/k)  \frac{\log(k)}{\log(N)}\,.
	\]
	Then $\phi_N$ is increasing on $[1, \sqrt{N}]$ and decreasing on $[\sqrt{N}, N]$. Moreover,
$$\phi_N(2) = \phi_N(N/2) \approx 1\quad \quad \mbox{ and } \quad \quad \phi_N(\sqrt N) = (\log(N))/4.$$
\end{lemma}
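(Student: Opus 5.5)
The statement is elementary calculus, and I would prove its two parts separately.

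\emph{The integral estimate.} The plan is to compute the integral in closed form and compare it with the claimed expressions. Put $a=\log x$ and $b=\log y$, so $0<a<b$ and $b-a=\log(y/x)$. The integrand $u^{-\lambda}$ is monotone on $[a,b]$, so the integral lies between $(b-a)\min(a^{-\lambda},b^{-\lambda})$ and $(b-a)\max(a^{-\lambda},b^{-\lambda})$. This alone does not suffice when $b\gg a$, so I would use the exact formula instead.

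\textbf{Case $\lambda<1$.} Here
\[
\int_a^b u^{-\lambda}\,du=\frac{b^{1-\lambda}-a^{1-\lambda}}{1-\lambda}.
\]
\begin{itemize}
\item Upper bound: $b^{1-\lambda}-a^{1-\lambda}\le b^{-\lambda}(b-a)\cdot C$, which follows from the mean value theorem when $a\ge b/2$. When $a<b/2$, we have $b-a\ge b/2$, so $b^{1-\lambda}\le 2b^{-\lambda}(b-a)$.
\item Lower bound: the integral is at least $(b-a)b^{-\lambda}$, since $u^{-\lambda}\ge b^{-\lambda}$ on the interval when $\lambda\ge 0$. If $\lambda<0$, the integrand is increasing; I would integrate over $[(a+b)/2,b]$ to get $\gtrsim (b-a)b^{-\lambda}$.
\end{itemize}

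\textbf{Case $\lambda>1$.} Here
\[
\int_a^b u^{-\lambda}\,du=\frac{a^{1-\lambda}-b^{1-\lambda}}{\lambda-1}=\frac{a^{1-\lambda}}{\lambda-1}\bigl(1-(a/b)^{\lambda-1}\bigr).
\]
The claimed quantity is $(b-a)a^{-\lambda}\cdot a/b=a^{1-\lambda}(1-a/b)$. It therefore suffices to show $1-s^{\lambda-1}\approx 1-s$ for $s=a/b\in(0,1)$, with constants depending on $\lambda$. This holds because $s\mapsto(1-s^{\mu})/(1-s)$ is continuous and positive on $[0,1)$, and it tends to $\mu$ as $s\to1$.

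The only delicacy is that $x$ may be close to $1$, so that $a$ is near $0$. The formulas above remain valid in that regime, since we never divided by $a$ except through the ratio $s$.

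\emph{The function $\phi_N$.} Set $v=\log k\in[0,\log N]$ and $L=\log N$. Then
\[
\phi_N=\frac{(L-v)\,v}{L},
\]
a concave parabola in $v$ with maximum at $v=L/2$, that is, at $k=\sqrt N$. This gives the monotonicity on $[1,\sqrt N]$ and $[\sqrt N,N]$, and the maximum value is $(L/2)^2/L=L/4$. The parabola is symmetric under $v\mapsto L-v$, i.e.\ $k\mapsto N/k$, which gives $\phi_N(2)=\phi_N(N/2)$. Finally, $\phi_N(2)=\log 2\,(1-\log 2/\log N)$, which lies between $\tfrac12\log2$ and $\log 2$ once $N\ge4$; small $N$ can be absorbed into the constants.

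No step is a real obstacle. The most care is needed in the $\lambda<1$ case, in obtaining uniform constants across the regimes $x\approx y$ and $x\ll y$.
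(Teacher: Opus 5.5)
Your argument is correct and is exactly the ``standard calculus'' verification the paper has in mind; the paper gives no further details. In particular, the exact antiderivative with the split $a\ge b/2$ versus $a<b/2$ for $\lambda<1$, the reduction to $1-s^{\lambda-1}\approx 1-s$ for $\lambda>1$, and the parabola $v(L-v)/L$ in $v=\log k$ all check out, with one blemish: your remark that small $N$ can be absorbed into the constants fails at $N=2$, where $\phi_2(2)=\phi_2(1)=0$, so the claim $\phi_N(2)\approx 1$ genuinely needs $N\ge 3$ (a defect of the statement, not of your method).
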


\noindent
{\bf Proof of Proposition \ref{caseIbis}.} One has $\alpha = 1-1/p$ and $\beta > 1-1/p$. Therefore $R({\cal E}_p) \approx 1$.
Since $\beta p' > 1$ and $\alpha p' = 1$,  one gets from Lemma \ref{lem:computation1} that
\begin{equation}
	\label{eq:comp2}
	\left(\sum_{j=k+1}^N \sigma_j^{p'} \right)^{1/p'}
	=  \left(\sum_{j=k+1}^N \frac{1}{j} \cdot (\log(j))^{- \beta p'} \right)^{1/p'}
	\approx (\log(k))^{-\beta} (\phi_{N}(k))^{1/p'},
\end{equation}
using comparison between series and integrals, and observing that $\phi_{N+1}(k+1) \approx \phi_N(k)$.
It can be seen that for a fixed constant $c < 1$,
$\phi_N(cn) \approx \max(1, \phi_N(n))$.
Therefore, from Theorem~B, with probability greater than $1 - e^{-\gamma n}$, one has
\[
{\cal R}_n ( {\cal E}_p)
\lesssim n^{-1/2} (\log(n))^{-\beta} (\max(1, \phi_N(n)))^{1/p'}.
\]
This shows the upper bound in both cases.

Let $p \ge 2$. Since $\alpha = 1 - 1/p$, then $\alpha \cdot 2p/(p-2) >1$ and for all $k \le N/2$, one has
\[
\left(\sum_{j=k+1}^N \sigma_j^{2p/(p-2)} \right)^{1/2 - 1/p} \approx k^{-1/2} (\log(k))^{- \beta}.
\]
Now, let $n$ be such that $\phi_N(n) \gtrsim 8^{p'}$. Then by
using \eqref{eq:comp2}, one gets for $k=n$,
\[
\left(\sum_{j=n+1}^N \sigma_j^{p'} \right)^{1/p'} \ge 8 \sqrt{n} \left(\sum_{j=n+1}^N \sigma_j^{2p/(p-2)} \right)^{1/2 - 1/p} .
\]
From Lemma \ref{lem:computation1} and the variations of $\phi_N$, the condition $\phi_N(n) \gtrsim 8^{p'}$ is satisfied if and only if $1 \lesssim n \lesssim N$.
Thus, as we just checked, assumption  \eqref{eq:hypLower} of Corollary \ref{LowerEllpThm} also follows for such $n$ (and $k=n$), and hence
\[
{\cal R}_n ( {\cal E}_p ) \gtrsim n^{-1/2} (\log(n))^{-\beta} (\phi_N(n))^{1/p'},
\]
with probability greater than $1 - 2e^{-2n}$.

For the remaining values of $n$ or when $1 < p < 2$, we apply Lemma \ref{lem:lowerbound-pol} which gives a general lower bound.
\qed

\subsubsection{Bad behavior of the random radius}
In the remaining cases, we prove that the random radius can never shrink below at least a constant value
independently of the dimension of the ambient space.

\begin{proposition}
	\label{caseIII}
	Assume that $1/2 - 1/p < \alpha  < 1- 1/p$, and $\beta \in \RR$.
	\\
	If $p \ge 2$ then for all  $n < N$, one has
	\[
	{\cal R}_n ( {\cal E}_p )
	\approx
	\min \left\{ 1,
	n^{-1/2} N^{1-1/p - \alpha} (\log(N+1))^{-\beta} \right\}
	\]
	with probability  greater than $1 - e^{-\gamma n} - 2 e^{-\gamma p n^{2/p}}$.
	\\
	If $p<2$ then for all $n<N$
	\[
	n^{1/2 - 1/p - \alpha} (\log(n+1))^{-\beta}
	\lesssim
	{\cal R}_n ( {\cal E}_p )
	\lesssim
	\min \left( 1,
	n^{-1/2} N^{1-1/p - \alpha} (\log(N+1))^{-\beta}
	\right)
	\]
	with probability at least $1 - 3 e^{-\gamma n}$. And, for all $n \lesssim N^{2/p'}$, one has
	\[
	{\cal R}_n ( {\cal E}_p )
	\approx
	\min \left( 1,
	n^{-1/2} N^{1-1/p - \alpha} (\log(N+1))^{-\beta} \right)
	\]
	with probability at least  $1 - 3 e^{-\gamma n}$.

	Moreover
	\[
	\forall n \lesssim N^{2 (1-1/p - \alpha)} (\log(N+1))^{2\beta}, \quad
	\P \big( {\cal R}_n ( {\cal E}_p ) \approx 1  \big)
	\ge
	1 - 2 e^{- 2n}.
	\]	
\end{proposition}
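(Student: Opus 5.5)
The plan is to get the upper bound from Theorem~B together with the trivial bound $R({\cal E}_p)\approx 1$. The lower bound will come from Corollary~\ref{LowerEllpThm}, with an appropriate choice of $k$, in the range where its hypothesis \eqref{eq:hypLower} can be checked. In the remaining range I fall back on the general bound of Lemma~\ref{lem:lowerbound-pol}.

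\emph{Preliminary asymptotics.} Since $\alpha<1-1/p$ we have $\alpha p'<1$, so the series $\sum\sigma_j^{p'}$ diverges. Comparing with integrals, for every $k\le N/2$ I expect
\[
\bigg(\sum_{j=k+1}^N\sigma_j^{p'}\bigg)^{1/p'}\approx N^{1-1/p-\alpha}(\log(N+1))^{-\beta},
\]
and the left side is of the same order or smaller for $k>N/2$. For $p>2$ the condition $\alpha>1/2-1/p$ means $\alpha\cdot 2p/(p-2)>1$. Hence $R({\cal E}_p)\approx r_{k,p}\approx 1$ for all $k$, and
\[
\bigg(\sum_{j>k}\sigma_j^{2p/(p-2)}\bigg)^{1/2-1/p}\approx k^{1/2-1/p-\alpha}(\log(k+1))^{-\beta}.
\]
For $p\le 2$ we have $r_{k,p}=\sigma_1\approx 1$. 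These facts are needed in every case below.

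\emph{Upper bound.} We always have ${\cal R}_n({\cal E}_p)\le R({\cal E}_p)\lesssim 1$. In addition, Theorem~B gives, with probability at least $1-e^{-\gamma n}$,
\[
{\cal R}_n({\cal E}_p)\lesssim n^{-1/2}\bigg(\sum_{j>\tilde c n}\sigma_j^{p'}\bigg)^{1/p'}\lesssim n^{-1/2}N^{1-1/p-\alpha}(\log(N+1))^{-\beta}.
\]
This holds for both $p\ge 2$ and $p<2$. When $\tilde c n>N/2$ the tail sum is only smaller, so the same estimate applies.

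\emph{Lower bound, $p\ge 2$.} I apply Corollary~\ref{LowerEllpThm} with $k=n$. Hypothesis \eqref{eq:hypLower} then reads
\[
N^{1-1/p-\alpha}(\log(N+1))^{-\beta}\gtrsim 8\, n^{1-1/p-\alpha}(\log(n+1))^{-\beta}.
\]
Because $1-1/p-\alpha>0$, this holds for all $n\le cN$ with $c=c_{\alpha,\beta,p}$ small; the logarithmic factors are absorbed into $c$. The corollary then yields
\[
{\cal R}_n({\cal E}_p)\gtrsim\min\{1,\,n^{-1/2}N^{1-1/p-\alpha}(\log(N+1))^{-\beta}\}
\]
with probability at least $1-2e^{-2n}$. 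For $cN<n<N$ the target quantity is $\approx N^{1/2-1/p-\alpha}(\log(N+1))^{-\beta}$. This coincides, up to constants, with the bound of Lemma~\ref{lem:lowerbound-pol}, which holds with probability at least $1-2e^{-\gamma p n^{2/p}}$. Combining the probabilities gives the claimed estimate.

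\emph{Lower bound, $p<2$.} The general lower bound in the statement is exactly Lemma~\ref{lem:lowerbound-pol}. For the sharper lower bound I apply Corollary~\ref{LowerEllpThm} with $k\approx N/2$, taking $r_{k,p}=\sigma_1\approx 1$. Then $\sigma_{k+1}\approx N^{-\alpha}(\log N)^{-\beta}$, and \eqref{eq:hypLower} becomes $N^{1-1/p}\gtrsim\sqrt n$, i.e.\ $n\lesssim N^{2/p'}$. In that range the corollary gives the matching lower bound $\min\{1,\,n^{-1/2}N^{1-1/p-\alpha}(\log(N+1))^{-\beta}\}$.

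\emph{The ``moreover'' part.} If $n\lesssim N^{2(1-1/p-\alpha)}(\log(N+1))^{2\beta}$, then $n^{-1/2}N^{1-1/p-\alpha}(\log(N+1))^{-\beta}\gtrsim 1$. With the same choices of $k$ ($k=n$ for $p\ge 2$, $k\approx N/2$ for $p<2$), the minimum in the lower bound equals $1$. I need to check that this range of $n$ lies inside the admissible one: for $p<2$ this follows from $\alpha>0$, and for $p\ge 2$ from $n\le cN$. Together with $R({\cal E}_p)\approx 1$, this gives ${\cal R}_n({\cal E}_p)\approx 1$ with probability at least $1-2e^{-2n}$, using only Corollary~\ref{LowerEllpThm}.

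The main obstacle is the careful bookkeeping of the logarithmic factors in the asymptotics of the tail sums, uniformly in $k$ and $n$ relative to $N$. This includes the boundary regime $n\approx N$, where several estimates have to be glued together. The probabilistic parts of the argument are routine.
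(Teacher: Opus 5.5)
Your proposal is correct and follows the paper's proof. The upper bound comes from Theorem~B together with $R({\cal E}_p)\approx 1$. The lower bound comes from Corollary~\ref{LowerEllpThm}, with $k\approx N$ when $p<2$, which gives the range $n\lesssim N^{2/p'}$. Lemma~\ref{lem:lowerbound-pol} covers the leftover range $n\gtrsim N$, which you treat more explicitly than the paper does.

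The one deviation is that for $p\ge 2$ you take $k=n$ instead of the paper's $k\approx N$. This works, as you check, but it forces you to compare $(N/n)^{1-1/p-\alpha}$ with a ratio of logarithms. With $k\approx N$, hypothesis \eqref{eq:hypLower} reduces directly to $n\lesssim N$, with no logarithmic bookkeeping.
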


\noindent
{\bf Proof.} Since $1/2 - 1/p < \alpha$, the series
$\sum \sigma_j^{2p/(p-2)}$ is convergent and $R({\cal E}_p) \approx 1$.
Since $\alpha < 1/p'$, one has for all $k \le N/2$,
\[
\left( \sum_{j=k+1}^{N} \sigma_j^{p'} \right)^{1/p'}
=
\left( \sum_{j=k+1}^{N} j^{-\alpha p'} (\log(j+1))^{-\beta p'} \right)^{1/p'}
\approx
N^{1/p' - \alpha} (\log(N+1))^{-\beta}.
\]
Taking $k=\tilde{c}\,n$ in the latter and
using Theorem~B, one gets that with probability greater than $1 - e^{-\gamma n}$,
\[
{\cal R}_n ( {\cal E}_p )
\lesssim
\min\left(1,
n^{-1/2} N^{1/p' - \alpha} (\log(N+1))^{-\beta}
\right)
\]
and the upper bound follows.

Let $p \ge 2$. Since $\alpha \cdot 2p/(p-2) > 1$, one has
\[
\left(\sum_{j=k+1}^N \sigma_j^{2p/(p-2)} \right)^{1/2 - 1/p} \approx k^{1/2-1/p} \, k^{-\alpha} (\log(k+1))^{-\beta}
= k^{-1/2} \, k^{1/p' -\alpha} (\log(k+1))^{-\beta}.
\]
Therefore, if $n \lesssim N$, assumption~\eqref{eq:hypLower} is satisfied for
$k \approx N$. An application of Corollary~\ref{LowerEllpThm} shows that
\[
{\cal R}_n ( {\cal E}_p )
\gtrsim
\min\left(1,
n^{-1/2} N^{1/p' - \alpha} (\log(N+1))^{-\beta}
\right),
\]
with probability greater than $1 - 2e^{-2n}$.

Let $p<2$. Assumption~\eqref{eq:hypLower} is satisfied whenever $k$ is such that
\[
k \le N/2
\quad  \mathrm{and} \quad
N^{1/p' - \alpha} (\log(N+1))^{-\beta} \gtrsim 8 \sqrt{n} k^{-\alpha} (\log(k+1))^{-\beta}.
\]
Therefore, if $n \lesssim N^{2/p'}$ we choose $k \approx N$ so that the assumption is satisfied.
Then an application of Corollary~\ref{LowerEllpThm} completes the proof.

The ``Moreover" part follows immediately. If $n \lesssim N^{2 (1-1/p - \alpha)} (\log(N+1))^{2\beta}$, one has
\[
\min\left\{1,
n^{-1/2} N^{1/p' - \alpha} (\log(N+1))^{-\beta} \right\} \approx 1
\]
which corresponds to ${\cal R}_n({\cal E}_p)$ for $p \ge 2$ and, since $n \lesssim N^{2/p'}$, for $p<2$ as well.
\qed

\begin{proposition}
	\label{caseII}
	Assume that $\alpha  = 1 - 1/p$ and $\beta < 1-1/p$.
	\\
		If $p \ge 2$ then, for all $n < N$, one has
	\[
	{\cal R}_n ( {\cal E}_p )
	\approx
	\min\left\{1,
	n^{-1/2} \log^{-\beta}(N+1)\big( \log^\alpha(eN/n)\big) \right\}
	\]
	with probability  at least $1 - e^{-\gamma n} - 2 e^{-\gamma p n^{2/p}}$.
	\\
	If $p < 2$ then, for all $n < N$, one has
	\[
	n^{-1/2} \log^{-\beta}(n+1)
	\lesssim
	{\cal R}_n ( {\cal E}_p )
	\lesssim
	\min\left\{1,
	n^{-1/2} \log^{-\beta}(N+1)\big( \log^\alpha(eN/n)\big) \right\}
	\]
	with probability at least  $1 - 3e^{-\gamma n}$.

	Moreover
	\[
	\forall n \lesssim (\log(N+1))^{2 (\alpha - \beta)}, \
	\P \big( {\cal R}_n ( {\cal E}_p ) \approx 1  \big)
	\ge
	1 - 2e^{- 2n}.
	\]
\end{proposition}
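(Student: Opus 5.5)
The plan mirrors the proof of Proposition~\ref{caseIII}, replacing the power-law tail sums by their logarithmic analogues. Since $\alpha = 1-1/p$ gives $\alpha p' = 1$ and $\beta < 1-1/p$ gives $\beta p' < 1$, the first case of Lemma~\ref{lem:computation1} (exponent $\lambda=\beta p'<1$) applies to the tail sum. Comparing the series with an integral and substituting $u=\log j$, for $k \le N/2$ we expect
\[
\left(\sum_{j=k+1}^N \sigma_j^{p'}\right)^{1/p'} \approx \left(\int_{\log k}^{\log N} u^{-\beta p'}\,du\right)^{1/p'} \approx \big(\log(N/k)\big)^{1/p'} (\log N)^{-\beta} = \log^{\alpha}(eN/k)\,\log^{-\beta}(N+1).
\]
Here we use $1/p'=\alpha$, and the factor $e$ absorbs the range $k$ close to $N$. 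Also $R({\cal E}_p)\approx 1$, because $\sigma_1=1$ for $p\le 2$, and for $p>2$ the series $\sum\sigma_j^{2p/(p-2)}$ converges since $\alpha\cdot 2p/(p-2)=2(p-1)/(p-2)>1$.

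\textbf{Upper bound.} Apply Theorem~B with the tail starting at $\tilde c n$. This gives, with probability $1-e^{-\gamma n}$,
\[
{\cal R}_n({\cal E}_p)\lesssim n^{-1/2}\log^{\alpha}(eN/n)\log^{-\beta}(N+1),
\]
using $\log(eN/(\tilde c n))\approx \log(eN/n)$. Combined with the trivial bound ${\cal R}_n\le R({\cal E}_p)\approx 1$, this yields the minimum, for every $p\in(1,\infty)$.

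\textbf{Lower bound, $p\ge2$.} Use Corollary~\ref{LowerEllpThm} with $k\approx N/2$, so that $r_{k,p}\approx 1$ and the tail sum is $\approx \log^{-\beta}(N+1)$ up to constants. The hypothesis \eqref{eq:hypLower} requires
\[
\log^{-\beta}(N+1)\gtrsim 8\sqrt n \left(\sum_{j>k}\sigma_j^{2p/(p-2)}\right)^{1/2-1/p}\approx \sqrt n\,k^{-1/2}\log^{-\beta}(k),
\]
which holds as soon as $n\lesssim N$. A better choice is to optimize over $k$: taking $k$ of order $n$ times a large constant (with $k\le N/2$) keeps the right side $\approx \log^{-\beta}(n)$. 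The left side is then $\log^\alpha(eN/k)\log^{-\beta}(N)$, so the hypothesis needs $\log^\alpha(eN/n)$ large compared with $(\log N/\log n)^{\beta}$. Corollary~\ref{LowerEllpThm} then gives ${\cal R}_n\gtrsim\min\{1,n^{-1/2}\log^\alpha(eN/n)\log^{-\beta}(N+1)\}$ with probability $1-2e^{-2n}$.

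\textbf{Main obstacle.} The difficulty is checking \eqref{eq:hypLower} uniformly in $n$, including the regimes where $n$ is comparable to $N$ or where $\beta$ is negative. For the remaining $n$ (near $N$), the target quantity is $\approx n^{-1/2}\log^{-\beta}(n+1)$, which is exactly the general bound of Lemma~\ref{lem:lowerbound-pol} since $1/2-1/p-\alpha=-1/2$; this accounts for the term $2e^{-\gamma p n^{2/p}}$ in the probability. I would split into the cases $n\le\sqrt N$ and $n>\sqrt N$, as in the analysis of $\phi_N$ in Lemma~\ref{lem:computation1}.

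\textbf{Lower bound, $p<2$.} Lemma~\ref{lem:lowerbound-pol} directly gives $n^{-1/2}\log^{-\beta}(n+1)$.

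\textbf{Moreover part.} For $n\lesssim(\log(N+1))^{2(\alpha-\beta)}$, take $k=N/2$ in Corollary~\ref{LowerEllpThm}. Then \eqref{eq:hypLower} reads $\log^{-\beta}(N)\gtrsim\sqrt n\,\sigma_{k+1}$ for $p\le2$, which is immediate since $\sigma_{k+1}$ is polynomially small; for $p>2$ it reads $\log^{-\beta}(N)\gtrsim\sqrt n\, N^{-1/2}$, also immediate. The lower bound then equals $\frac12\min\{1,\log^{-\beta}(N)/(8\sqrt n)\}$. To get order $1$ I would instead take $k$ small, for instance $k\approx 1$, where the tail is $\approx\log^{\alpha-\beta}(N)$. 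The minimum is $\approx 1$ once $n\lesssim \log^{2(\alpha-\beta)}(N)$, and \eqref{eq:hypLower} holds for the same range of $n$. The upper bound ${\cal R}_n\le R({\cal E}_p)\approx1$ is trivial.
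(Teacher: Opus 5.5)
Your upper bound (Theorem~B with the tail starting at $\tilde c n$), the $p<2$ lower bound via Lemma~\ref{lem:lowerbound-pol}, and the ``moreover'' part via $k=1$ in Corollary~\ref{LowerEllpThm} are all correct. For $p\ge 2$ the paper instead reads the ``moreover'' part off the main two-sided estimate, but your direct $k=1$ argument works for every $p$.

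The gap is the matching lower bound for $p\ge 2$, which is the real content of the proposition. You correctly reduce \eqref{eq:hypLower} with $k\approx Cn$ to a comparison between $\log^{\alpha}(eN/k)$ and $(\log N/\log k)^{\beta}$. You then label this the ``main obstacle'' and never verify it. In particular, you never use the hypothesis $\beta<1-1/p$ in this part, yet it is exactly what is needed. If $\beta>\alpha$ and $k$ is bounded, the left side behaves like $(\log k)^{\beta}\log^{\alpha-\beta}N\to 0$ as $N\to\infty$, so the comparison fails. Your worry about negative $\beta$ is misplaced: there $(\log N/\log k)^{\beta}\le 1$ and the condition is trivial. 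The delicate range is $0<\beta<\alpha$.

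The missing step is the elementary comparison the paper uses. Since $\log k\le \log N$ and $\beta<\alpha=1/p'$,
\[
\log^{\alpha}(eN/k)\left(\frac{\log k}{\log N}\right)^{\beta}\ \ge\ \left(\log(eN/k)\,\frac{\log k}{\log N}\right)^{\alpha}.
\]
The paper takes $k=n$, so the right-hand side is essentially $\phi_N(n)^{1/p'}$. It then uses the monotonicity of $\phi_N$ from Lemma~\ref{lem:computation1} to get $\phi_N(n)\gtrsim 8^{p'}$ for $1\lesssim n\lesssim N$, and handles the remaining $n$ by Lemma~\ref{lem:lowerbound-pol}.

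With your choice $k=Cn\le N/2$ and $C\ge e$, it closes even more simply. Write $a=\log k\ge 1$ and $b=\log(N/k)\ge 0$. Then the right-hand side above equals $\big((1+b)a/(a+b)\big)^{\alpha}\ge 1$, because $(1+b)a\ge a+b$ is equivalent to $b(a-1)\ge 0$. Using your computations of the two sums, \eqref{eq:hypLower} reads
\[
\log^{\alpha}(eN/k)\left(\frac{\log k}{\log N}\right)^{\beta}\gtrsim 8\sqrt{n/k}=\frac{8}{\sqrt C}.
\]
This holds for every $n\le N/(2C)$ once $C$ is a large enough constant. The range $n>N/(2C)$ is then covered by Lemma~\ref{lem:lowerbound-pol}, as you say. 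Without one of these two arguments, the sharp lower bound for $p\ge 2$ is not established.
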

Observe that if $n$ is proportional to $N$, the result is also sharp for $p<2$. In any case, once the upper bound reaches the level of an absolute constant, then it does give the correct answer as the ``moreover part" shows. This happens for a big enough range of $n$ relatively to $\log (N+1)$, since $\alpha - \beta > 0$.

\medskip

\noindent
{\bf Proof.}  We have $\alpha = 1-1/p$ and $\beta < 1-1/p$. Therefore the series
$\sum_{j> 1} \sigma_j^{2p/(p-2)}$ is convergent and
$R({\cal E}_p) \approx 1$.

Since $\beta p' < 1$, one has by Lemma \ref{lem:computation1},
\[
\left( \sum_{j = k+1}^N \sigma_j^{p'} \right)^{1/p'} = \left( \sum_{j = k+1}^N \frac{1}{j} (\log(j))^{-\beta p'} \right)^{1/p'} \approx (\log(eN/k))^{1/p'} (\log(N))^{- \beta}.
\]
Taking $k=\tilde{c}\,n$ in the latter and
using Theorem~B,  one gets that with probability greater than $1 - e^{-\gamma n}$,
\[
{\cal R}_n ( {\cal E}_p )
\lesssim
\min\left(1,
n^{-1/2} \log^{-\beta}(N+1)\big( \log^\alpha(eN/n)\big)\right).
\]
This implies the upper bound.

Let $p \ge 2$. We have again
\[
\sqrt{n} \left(\sum_{j=k+1}^N \sigma_j^{2p/(p-2)} \right)^{1/2 - 1/p} \approx n^{1/2} k^{-1/2} (\log(k))^{- \beta}
\]
and
\[
\left( \sum_{j = k+1}^N \sigma_j^{p'} \right)^{1/p'}  \approx (\log(eN/k))^{1/p'} (\log(N))^{- \beta}.
\]
Since $1/p' > \beta$ and $n < N$, one has
\[
(\log(eN/k))^{1/p'} \left(\frac{\log(n)}{\log(N)}\right)^{\beta} \ge
\left(\log(eN/k) \frac{\log(n)}{\log(N)}\right)^{1/p'} = \phi_N(n)^{1/p'}.
\]
As in the proof of Proposition \ref{caseIbis}, let us consider $n$ such that $\phi_N(n) \gtrsim 8^{p'}$, and note that for $k=n$
\[
\left(\sum_{j=n+1}^N \sigma_j^{p'} \right)^{1/p'} \ge 8 \sqrt{n} \left(\sum_{j=n+1}^N \sigma_j^{2p/(p-2)} \right)^{1/2 - 1/p} .
\]
Applying Corollary~\ref{LowerEllpThm}, we deduce that, for all $1 \lesssim n \lesssim N$,
\[
{\cal R}_n ( {\cal E}_p ) \gtrsim n^{-1/2} (\log(N))^{-\beta} (\log(eN/n))^{1/p'},
\]
with probability greater than $1 - 2e^{-2n}$. This is the requested lower bound.

For the remaining values of $n$ or when $1 < p < 2$, we apply Lemma \ref{lem:lowerbound-pol} which gives a general lower bound.

We now prove the ``moreover part" of the Proposition. Assume $n \lesssim (\log(N))^{2 (\alpha - \beta)}$.
The case $p\ge 2$ is already proven since the estimate gives ${\cal R}_n ( {\cal E}_p ) \approx 1$ in this range of integers $n$.
It remains to deal with $p<2$.
Since
\[
\left( \sum_{j=1}^{N} \sigma_j^{p'} \right)^{1/p'} \approx \big( \log N \big)^{1/p' - \beta} = \big( \log N \big)^{\alpha - \beta}
\qquad \mathrm{and}
\qquad
\sigma_1 \approx 1
\]
it is clear that assumption~\eqref{eq:hypLower} is satisfied with $k=1$ as soon as $n \lesssim (\log N)^{2(\alpha - \beta)}$.
By Corollary~\ref{LowerEllpThm}, one gets
\[
{\cal R}_n({\cal E}_p) \gtrsim 1,
\]
with probability greater than $1 - 2e^{-2n}$.
By also recalling that ${\cal R}_n ( {\cal E}_p) \le R ( {\cal E}_p) \approx 1$, we complete  the proof.
\qed

\begin{proposition}
	\label{caseIIbis}
	Assume that $\alpha  = \beta = 1 - 1/p$.
	\\
	If $p \ge 2$ then, for all $n < N$, one has
	\[
	{\cal R}_n ( {\cal E}_p )
	\approx
	\min\left\{1,
	n^{-1/2} \left( \log\left(\frac{\log N}{\log n}\right)\right)^{\beta} \right\},
	\]
	with probability  at least $1 - e^{-\gamma n} - 2 e^{-\gamma p n^{2/p}}$.
	\\
		If $p < 2$ then, for all $n < N$, one has
	\[
	n^{-1/2} \log^{-\beta}(n+1)
	\lesssim
	{\cal R}_n ( {\cal E}_p )
	\lesssim
	\min\left\{1,
	n^{-1/2} \left( \log\left(\frac{\log N}{\log n}\right)\right)^{\beta} \right\},
	\]
	with probability  at least $1 - 3 e^{-\gamma n}$.
	
	Moreover
	\[
	\forall n \lesssim (\log(\log(N)))^{2 \alpha}, \
	\P \big( {\cal R}_n ( {\cal E}_p ) \approx 1  \big)
	\ge
	1 - 2e^{- 2n}.
	\]	
\end{proposition}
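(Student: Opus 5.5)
The proof follows the same template as Propositions~\ref{caseIbis} and \ref{caseII}. The only new ingredient is the asymptotics of the tail sum
\[
S_k:=\Big(\sum_{j=k+1}^N \sigma_j^{p'}\Big)^{1/p'}, \qquad \sigma_j^{p'} = j^{-1}(\log(j+1))^{-1},
\]
since $\alpha p'=\beta p'=1$. By comparison with $\int_{\log k}^{\log N} u^{-1}\,du$, I expect
\[
S_k\approx \big(\log(\log N/\log k)\big)^{1/p'}
\]
for $2\le k\le N/2$. Since $1/p'=\beta$, this equals $(\log(\log N/\log k))^{\beta}$. For $k=1$ one gets instead $S_1\approx(\log\log N)^{\beta}$. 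We also have $R({\cal E}_p)\approx 1$, because $\sum\sigma_j^{2p/(p-2)}$ converges when $p>2$ (as $\alpha\cdot 2p/(p-2)>1$), and $\sigma_1\approx 1$ when $p\le2$.

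\emph{Upper bound.} Apply Theorem~B with $k=\tilde c\, n$. A fixed constant factor inside $\log k$ changes $\log(\log N/\log k)$ only up to constants, after taking a maximum with $1$ for small $n$ or $n$ close to $N$. This gives
\[
{\cal R}_n\lesssim n^{-1/2}\big(\log(\log N/\log n)\big)^{\beta}
\]
with probability at least $1-e^{-\gamma n}$. Together with ${\cal R}_n\le R({\cal E}_p)\approx1$, this yields the upper bound for all $p$.

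\emph{Lower bound, $p\ge2$.} For $k\le N/2$ we have
\[
\Big(\sum_{j>k}\sigma_j^{2p/(p-2)}\Big)^{1/2-1/p}\approx k^{-1/2}(\log k)^{-\beta}.
\]
Take $k=n$ in Corollary~\ref{LowerEllpThm}. Hypothesis \eqref{eq:hypLower} becomes
\[
(\log(\log N/\log n))^{\beta}\gtrsim 8(\log n)^{-\beta},
\]
that is, $\log n\cdot\log(\log N/\log n)\gtrsim 8^{p'}$. This holds on a middle range $1\lesssim n\le N^{c}$, and there we obtain ${\cal R}_n\gtrsim \min\{1,n^{-1/2}S_n\}$ with probability at least $1-2e^{-2n}$.

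For $n$ near $N$, the target quantity is $n^{-1/2}$ up to constants. I would cover that range, and the region $n\lesssim1$, using Lemma~\ref{lem:lowerbound-pol}. That lemma gives $n^{1/2-1/p-\alpha}\log^{-\beta}n = n^{-1/2}(\log n)^{-\beta}$. I expect this step to be the main obstacle: the lemma matches the target only up to the factor $(\log n\cdot\log(\log N/\log n))^{\beta}$, which is bounded precisely in the range where \eqref{eq:hypLower} fails, so the two ranges should glue. To make this work I would check carefully that the complement of the valid range consists of $n$ with $\log n\cdot\log(\log N/\log n)\lesssim 1$, which forces either $n\lesssim1$ or $\log n\approx\log N$. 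In the latter case $\log(\log N/\log n)\lesssim 1/\log N$ makes the target about $n^{-1/2}(\log n)^{-\beta}$ or smaller. The probability bound $1-e^{-\gamma n}-2e^{-\gamma p n^{2/p}}$ is then the union of the two events.

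\emph{Lower bound, $p<2$.} Lemma~\ref{lem:lowerbound-pol} directly gives $n^{-1/2}\log^{-\beta}(n+1)$ with probability $1-2e^{-\gamma n}$, matching the stated probability $1-3e^{-\gamma n}$ after combining with Theorem~B.

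\emph{Moreover part.} Use Corollary~\ref{LowerEllpThm} with $k=1$, so that $r_{1,p}\approx\sigma_1\approx1$ for $p\le 2$ and also for $p>2$. Hypothesis \eqref{eq:hypLower} reads $(\log\log N)^{\beta}\gtrsim 8\sqrt n$, which holds for $n\lesssim(\log\log N)^{2\alpha}$. The corollary then gives
\[
{\cal R}_n\gtrsim\min\{1,(\log\log N)^{\beta}/\sqrt n\}\approx1
\]
with probability at least $1-2e^{-2n}$. The reverse inequality ${\cal R}_n\le R({\cal E}_p)\approx1$ completes the proof.
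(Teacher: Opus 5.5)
Your plan follows the paper's proof exactly. The paper also uses Theorem~B with $k=\tilde c n$ for the upper bound. For $p\ge2$ it uses Corollary~\ref{LowerEllpThm} with $k=n$ where $\psi_N(n):=\log n\cdot\log(\log N/\log n)\gtrsim 8^{p'}$, and Lemma~\ref{lem:lowerbound-pol} on the complement. For $p<2$ it uses the lemma alone, and it takes $k=1$ for the ``moreover'' part. Your lower bounds and your ``moreover'' argument are fine.

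The gap is your claim that a constant factor inside $\log k$ costs only a constant.
\begin{itemize}
\item For $n$ large, $\log(\log N/\log(\tilde c n))=\log(\log N/\log n)+\log(\log n/\log(\tilde c n))$, and the last term is of order $1/\log n$.
\item So Theorem~B gives only ${\cal R}_n\lesssim n^{-1/2}\bigl(\log(\log N/\log n)+1/\log n\bigr)^{\beta}$. This matches your target only where $\psi_N(n)\gtrsim 1$, i.e.\ for $n\le cN$ with $c<1$ fixed.
\item When $N-n=o(N)$, you have $\log(\log N/\log n)\approx (N-n)/(N\log N)\ll 1/\log N$.
\end{itemize}
No argument can repair this, because the claimed upper bound is false there. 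For $n=N-1$ it reads ${\cal R}_n\lesssim N^{-1/2}(N\log N)^{-\beta}$.
\begin{itemize}
\item For $p\ge2$ one has, deterministically, ${\cal R}_n\ge\sigma_{n+1}=\sigma_N\approx N^{-\beta}(\log N)^{-\beta}$. Indeed, a codimension-$n$ subspace meets $\RR^{n+1}$ nontrivially, and ${\cal E}_p\cap\RR^{n+1}\supset\sigma_{n+1}B_2^{n+1}$. This exceeds the claim by a factor $N^{1/2}$.
\item For $p<2$, the stated lower bound $\approx N^{-1/2}(\log N)^{-\beta}$ exceeds the stated upper bound by a factor $N^{\beta}$.
\end{itemize}
You noted that near $N$ the target is ``about $n^{-1/2}(\log n)^{-\beta}$ or smaller''. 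That ``or smaller'' is what lets the lower bound glue, and it is exactly what breaks the matching upper bound.

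The paper's proof has the same defect. It asserts $(\sum_{j>k}\sigma_j^{p'})^{1/p'}\approx(\log(\log N/\log n))^{1/p'}$ with $k=\tilde c n$. In Propositions~\ref{caseII} and~\ref{caseIbis} the problem is avoided because $\log(eN/n)\ge1$ and because of the $\max\{1,\phi_N(n)\}$.

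What your argument (and the paper's) actually proves is one of the following:
\begin{itemize}
\item the statement restricted to $n\le cN$, with constants depending on $c$; or
\item for all $n<N$, the version with $\log(\log N/\log n)$ replaced by $\log(\log N/\log n)+1/\log(n+1)$. For $p\ge 2$ this reads ${\cal R}_n\approx\min\{1,\,n^{-1/2}(\log(n+1))^{-\beta}\max\{1,\psi_N(n)\}^{\beta}\}$.
\end{itemize}
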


\noindent
{\bf Proof.} We proceed along the same lines as in the proof of Proposition~\ref{caseII},
replacing the evaluation of $(\sum \sigma_j^{p'})^{1/p'}$ by the following estimate
\[
\left( \sum_{j=k+1}^{N} \sigma_j^{p'} \right)^{1/p'}
\approx
\left( \log\left(\frac{\log N}{\log n}\right)\right)^{1/p'}.
\]
Then we look at the function
\[
\psi_N(n) = \log\left(\frac{\log N}{\log n} \right) \cdot \log n
\]
which is increasing on $[2, N^{1/e}]$, decreasing on $[N^{1/e}, N]$, and satisfies
$\psi_N(2) \approx \psi_N(N/2) \approx 1$. With this observation in mind, the argument becomes a repetition of the proof of Proposition~\ref{caseII}.
\qed

\medskip
Since $\alpha \ge 0$, the next 3 cases concern only values of $p$ larger than 2.
\begin{proposition}
	\label{caseIIIbis}
	Assume that $\alpha = 1/2 - 1/p$ and $\beta > 1/2-1/p$.
\\	
	Then for all $n \lesssim N  (\log(N+1))^{ - 2 \beta}$, one has
	\[
	{\cal R}_n ( {\cal E}_p )
	\approx 1,
	\]
	for all $n$ such that $N  (\log(N+1))^{ - 2 \beta} \lesssim n \lesssim N$, one has
	\[
	{\cal R}_n ( {\cal E}_p )
	\approx
	n^{-1/2} N^{1/2} (\log(N+1))^{-\beta},
	\]
	and for all $n \gtrsim N $, one has
	\[
	{\cal R}_n ( {\cal E}_p )
	\approx
	(\log(N+1))^{-\beta},
	\]
	with probability at least $1 - 3 e^{-\gamma n}$ in the  first two cases  and $1 - 3 e^{-\gamma p n^{2/p}}$ in the last one.
\end{proposition}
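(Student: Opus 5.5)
The plan is to compute the two series governing ${\cal E}_p$ in this regime, take all upper bounds from Theorem~B (together with the trivial bound ${\cal R}_n({\cal E}_p)\le R({\cal E}_p)$), and take the lower bounds from Corollary~\ref{LowerEllpThm} with $k\approx N/2$ when $n\lesssim N$, and from Lemma~\ref{lem:lowerbound-pol} when $n\gtrsim N$. Since $\alpha=1/2-1/p\ge 0$, we have $p\ge 2$; the case $p=2$, $\alpha=0$ is handled the same way with the obvious modifications.

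\textbf{Step 1: computing the sums.} Set $\lambda=2p/(p-2)$. Then $\sigma_j^{\lambda}=j^{-1}\log^{-\lambda\beta}(j+1)$, and $\lambda\beta>1$ because $\beta>1/2-1/p$. Hence $\sum_j\sigma_j^{\lambda}$ converges and $R({\cal E}_p)\approx 1$.

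For the first series, note that $\alpha p'=(p-2)/(2(p-1))<1$. For $k\le N/2$ this gives
\[
\Big(\sum_{j>k}\sigma_j^{p'}\Big)^{1/p'}\approx N^{1/p'-\alpha}(\log(N+1))^{-\beta}=N^{1/2}(\log(N+1))^{-\beta}.
\]
For the second series we do not need the full tail. On the short block $N/2<j\le N$, where $\sigma_j\approx N^{-\alpha}\log^{-\beta}N$, we get
\[
\Big(\sum_{j>N/2}\sigma_j^{\lambda}\Big)^{1/2-1/p}\approx N^{1/2-1/p}N^{-\alpha}\log^{-\beta}N=(\log N)^{-\beta}.
\]

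\textbf{Step 2: upper bounds.} Apply Theorem~B. For every $n<N$, the sum $\sum_{j>\tilde c n}\sigma_j^{p'}$ is $\approx N^{1-\alpha p'}(\log N)^{-\beta p'}$. This holds both for $n\le N/2$ and for $n\gtrsim N$, since in the latter case the range $(\tilde c n,N]$ still has length $\approx N$. So, with probability $\ge 1-e^{-\gamma n}$,
\[
{\cal R}_n({\cal E}_p)\lesssim \min\{1,\ n^{-1/2}N^{1/2}(\log(N+1))^{-\beta}\}.
\]
This yields the three claimed upper bounds:
\begin{itemize}
\item it equals $1$ when $n\lesssim N\log^{-2\beta}N$;
\item it equals the middle expression otherwise;
\item it equals $(\log N)^{-\beta}$ once $n\approx N$.
\end{itemize}

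\textbf{Step 3: lower bounds.} For $n\le N/64$, choose $k=\lfloor N/2\rfloor$ in Corollary~\ref{LowerEllpThm}. By Step~1, hypothesis \eqref{eq:hypLower} reads
\[
N^{1/2}\log^{-\beta}N\gtrsim 8\sqrt n\,\log^{-\beta}N,
\]
which holds for $n\le cN$. We also have $r_{k,p}\approx 1$, since $\sum_j\sigma_j^\lambda$ converges and its first term is $\approx 1$. Corollary~\ref{LowerEllpThm} therefore gives, with probability $\ge 1-2e^{-2n}$,
\[
{\cal R}_n({\cal E}_p)\gtrsim\min\{1,\ n^{-1/2}N^{1/2}(\log N)^{-\beta}\},
\]
which matches both of the first two regimes.

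For $n\gtrsim N$, Lemma~\ref{lem:lowerbound-pol} gives
\[
{\cal R}_n({\cal E}_p)\gtrsim n^{1/2-1/p-\alpha}\log^{-\beta}(n+1)=\log^{-\beta}(n+1)\approx\log^{-\beta}(N+1),
\]
with probability $\ge 1-2e^{-\gamma pn^{2/p}}$.

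\textbf{Main obstacle.} The delicate point is choosing $k$ in Step~3 so that the Corollary~\ref{LowerEllpThm} hypothesis holds all the way up to $n\approx N$. If one instead used the full-tail asymptotics, $(\sum_{j>k}\sigma_j^\lambda)^{1/2-1/p}\approx(\log k)^{1/2-1/p-\beta}$, the hypothesis would only hold for $n\lesssim N(\log N)^{-(1-2/p)}$, leaving a gap. The remedy is to observe that for $k=N/2$ only the short block $(N/2,N]$ contributes, so that quantity is just $\approx(\log N)^{-\beta}$.

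The remaining work is routine: check the comparison of sums with integrals, track the constants to ensure the intermediate range $cN\le n\le N/64$ is absorbed into one of the two regimes, and combine the failure probabilities into the stated bounds.
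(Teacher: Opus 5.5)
Your proposal is correct and follows the paper's proof essentially step for step: the upper bounds come from Theorem~B using $\big(\sum_{j>k}\sigma_j^{p'}\big)^{1/p'}\approx N^{1/2}(\log(N+1))^{-\beta}$, and the lower bounds come from Corollary~\ref{LowerEllpThm} with $k\approx N/2$ when $n\lesssim N$ and from Lemma~\ref{lem:lowerbound-pol} when $n\gtrsim N$. Your direct estimate over the block $(N/2,N]$ is the same as the paper's observation that $\phi_N(N/2)\approx 1$ in $\big(\sum_{j>k}\sigma_j^{2p/(p-2)}\big)^{1/2-1/p}\approx(\log k)^{-\beta}(\phi_N(k))^{1/2-1/p}$.
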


\noindent
{\bf Proof.}
Since $\alpha = 1/2 - 1/p$ and $\beta > 1/2-1/p$, the series $\sum \sigma_j^{2p/(p-2)}$ is convergent and $R({\cal E}_p) \approx 1$. The proof essentially repeats the proof of Proposition~\ref{caseIII}. Since $\alpha < 1/p'$, one has for all $k \le N/2$,
\[
\left( \sum_{j=k+1}^{N} \sigma_j^{p'} \right)^{1/p'}
=
\left( \sum_{j=k+1}^{N} j^{-\alpha p'} (\log(j+1))^{-\beta p'} \right)^{1/p'}
\approx
N^{1/2} (\log(N+1))^{-\beta}.
\]
Taking $k=cn$ in the latter and
using Theorem~B,  one gets that with probability  at least  $1 - e^{-\gamma n}$,
\[
{\cal R}_n ( {\cal E}_p )
\lesssim
\min\left(1,
n^{-1/2} N^{1/2} (\log(N+1))^{-\beta}
\right).
\]
Since $\beta \cdot 2p/(p-2) > 1$, we obtain from Lemma \ref{lem:computation1} that
\[
\left(\sum_{j=k+1}^N \sigma_j^{2p/(p-2)} \right)^{1/2 - 1/p}
=
\left(\sum_{j=k+1}^N \frac{1}{j} (\log(j))^{- \beta \cdot 2p/(p-2)} \right)^{1/2 - 1/p}
\approx
(\log(k))^{-\beta} (\phi_N(k))^{1/2 - 1/p}.
\]
If $n \lesssim N$, we see from Lemma \ref{lem:computation1} that we can choose $k = N/2$ such that $\phi_N(k) \approx 1$ and
\[
\left( \sum_{j=k+1}^{N} \sigma_j^{p'} \right)^{1/p'}
\ge 8 \sqrt{n} \left(\sum_{j=k+1}^N \sigma_j^{2p/(p-2)} \right)^{1/2 - 1/p}.
\]
An application of Corollary \ref{LowerEllpThm} yields the first two cases of the proposition. For the remaining values of $n$, which are $\gtrsim N$, it is enough to apply
Lemma \ref{lem:lowerbound-pol}.

\begin{proposition}
	\label{caseIV}
	Assume that either $(\alpha = 1/2 - 1/p$ and $\beta \le 1/2 - 1/p)$ or that $(\alpha  <  1/2- 1/p$ and $\beta \in \RR)$.
	Then
	\[
	{\cal R}_n ( {\cal E}_p ) \approx R({\cal E}_p)	
	\]
	with probability  at least  $1 - 2e^{-2 n}$ for $n \lesssim N$ and  at least  $1 - 2 e^{-\gamma p n^{2/p}}$ for $n \gtrsim N$.
\end{proposition}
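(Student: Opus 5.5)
The upper bound ${\cal R}_n({\cal E}_p)\le R({\cal E}_p)$ holds trivially, so only the lower bound needs proof. Here $p>2$ necessarily, since $\alpha>0$ and $\alpha\le 1/2-1/p$. I will handle the two ranges of $n$ separately.

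\emph{Range $n\lesssim N$.} I will apply Corollary~\ref{LowerEllpThm} with $k\approx N/2$. Set $q=2p/(p-2)$. The hypothesis $\alpha\le 1/2-1/p$ means $\alpha q\le 1$, so the series $\sum_j\sigma_j^{q}$ diverges or is of log type. By comparison with integrals (Lemma~\ref{lem:computation1} in the logarithmic case $\alpha=1/2-1/p$), the partial sums up to $k\approx N/2$ capture a constant fraction of the full sum. Hence
\[
r_{k,p}=\Big(\sum_{i\le k}\sigma_i^{q}\Big)^{1/2-1/p}\approx\Big(\sum_{i\le N}\sigma_i^{q}\Big)^{1/2-1/p}=R({\cal E}_p).
\]
This is the step that uses the divergence, or slow convergence, of the $q$-series. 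When $\alpha<1/2-1/p$, or when $\alpha=1/2-1/p$ and $\beta<1/2-1/p$, the sum is dominated by its tail up to $N$. In the critical case $\alpha=\beta=1/2-1/p$ the sum is $\log\log N$-type, and then $k=N/2$ must be replaced by $k=N^{c}$, or $k$ chosen so that $\log\log k\approx\log\log N$. I will check this case carefully.

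For the tail I will use the monotonicity of $\sigma$. For $j>k\approx N/2$ all $\sigma_j$ are comparable to $\sigma_N\approx N^{-\alpha}\log^{-\beta}N$, so
\[
\Big(\sum_{j>k}\sigma_j^{p'}\Big)^{1/p'}\approx N^{1/p'}\sigma_N,\qquad \Big(\sum_{j>k}\sigma_j^{q}\Big)^{1/q}\approx N^{1/2-1/p}\sigma_N .
\]
Their ratio is $\approx N^{1/2}$, so \eqref{eq:hypLower} holds as soon as $n\le cN$. The lower bound from the corollary is then
\[
\tfrac12\min\Big\{r_{k,p},\ \tfrac{1}{8\sqrt n}N^{1/p'}\sigma_N\Big\}.
\]
To finish I need $N^{1/p'}\sigma_N/\sqrt n\gtrsim R({\cal E}_p)$. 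Since $n\lesssim N$, it suffices that $N^{1/2-1/p}\sigma_N\gtrsim(\sum_{i\le N}\sigma_i^{q})^{1/q}$. This is exactly the statement that the tail block carries a constant fraction of the $\ell_q$ mass, which holds when $\alpha q\le 1$ apart from the critical log cases.

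\textbf{Main obstacle.} The critical cases $\alpha=1/2-1/p$, $\beta\le 1/2-1/p$ are where I expect trouble. For $\beta=1/2-1/p$ the mass is spread over dyadic scales like $\log\log$, and the last block fails to carry a constant fraction. There I would take $k$ of polynomial size, say $k=N^{1/2}$, so that $r_{k,p}\approx R({\cal E}_p)$ still holds. The tail $p'$-sum is $\gtrsim N^{1/p'}\sigma_N$, while the tail $q$-sum is at most $R({\cal E}_p)$. I would then verify \eqref{eq:hypLower} and the comparison $N^{1/p'}\sigma_N/\sqrt n\gtrsim R$ for $n\lesssim N$; this could require shrinking the admissible range to $n\lesssim N/\mathrm{polylog}$, which I would need to reconcile with the statement.

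\emph{Range $n\gtrsim N$.} Here I will use Lemma~\ref{lem:lowerbound-pol}, which gives ${\cal R}_n\gtrsim n^{1/2-1/p-\alpha}\log^{-\beta}n\approx N^{1/2-1/p}\sigma_N$ with probability $1-2e^{-\gamma pn^{2/p}}$. This is $\approx R({\cal E}_p)$ by the same block comparison used above.
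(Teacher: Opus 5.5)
Your treatment of $\alpha<1/2-1/p$ is correct and is exactly the paper's proof. You take $k=N/2$ in Corollary~\ref{LowerEllpThm}: the tail $p'$- and $q$-norms are $\approx N^{1/p'}\sigma_N$ and $\approx N^{1/2-1/p}\sigma_N$, and the last block carries a constant fraction of $\sum_{i\le N}\sigma_i^q$. For $n\gtrsim N$ you use Lemma~\ref{lem:lowerbound-pol}.

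The gap is the critical case $\alpha=1/2-1/p$, and it is wider than you think. It affects every $\beta\le 1/2-1/p$, not only $\beta=1/2-1/p$.

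\begin{itemize}
\item \emph{The block comparison fails.} Since $\alpha q=1$, $\sigma_i^q=i^{-1}\log^{-\beta q}(i+1)$. The block $N/2<i\le N$ contributes $\approx(\log N)^{-\beta q}$, while the whole sum is $\approx(\log N)^{1-\beta q}$ if $\beta<\alpha$, or $\approx\log\log N$ if $\beta=\alpha$. So your claim that for $\beta<1/2-1/p$ the sum is dominated by its tail is false.
\item \emph{Both comparisons you rely on fail.} These are $N^{1/p'}\sigma_N/\sqrt n\gtrsim R({\cal E}_p)$ for $n$ of order $N$, and $N^{1/2-1/p}\sigma_N\approx R({\cal E}_p)$ for $n\gtrsim N$. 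Their left-hand sides are $\approx(\log N)^{-\beta}$, whereas $R({\cal E}_p)\approx(\log N)^{\alpha-\beta}$, resp.\ $(\log\log N)^{\alpha}$.
\item \emph{No choice of $k$ repairs this.} Already $r_{N/2,p}\approx R({\cal E}_p)$, so moving to $k=N^{1/2}$ gains nothing. The reason is that the statement itself is false there. Since $1/p'-\alpha=1/2$, Theorem~B gives ${\cal R}_n({\cal E}_p)\lesssim\sqrt{N/n}\,(\log N)^{-\beta}$ with high probability. For $n\approx N$ this also follows from the paper's own Corollary~\ref{cor:lp-ellipsoid} with $k\approx n/2$.
\item \emph{A concrete counterexample.} Take $p=4$, $\alpha=1/4$, $\beta=0$. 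Then $R({\cal E}_4)\approx(\log N)^{1/4}$, but ${\cal R}_{N/2}({\cal E}_4)\lesssim 1$.
\end{itemize}

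The paper covers this case only with ``treated similarly'', which does not hold up. What the $k=N/2$ argument actually proves in the critical case is
\[
{\cal R}_n({\cal E}_p)\approx\min\{R({\cal E}_p),\sqrt{N/n}\,(\log N)^{-\beta}\}\quad\text{for } n\lesssim N,
\]
and ${\cal R}_n({\cal E}_p)\approx(\log N)^{-\beta}$ for $n\gtrsim N$, as in Proposition~\ref{caseIIIbis}. Hence ${\cal R}_n\approx R({\cal E}_p)$ holds only for $n\lesssim N(\log N)^{-2\alpha}$, with an extra factor $(\log\log N)^{-2\alpha}$ if $\beta=\alpha$.

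Your suspicion that the range must shrink to $n\lesssim N/\mathrm{polylog}$ is right. It cannot be reconciled with the statement as written, which is correct only for $\alpha<1/2-1/p$.
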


\noindent
{\bf Proof.}
Observe that for every $k \le N/2$, one has
\[
\left( \sum_{j=k+1}^{N} \sigma_j^{p'} \right)^{1/p'}
\approx
N^{1 - 1/p-\alpha} (\log(N+1))^{-\beta}.
\]
In both cases, the series $\sum \sigma_j^{2p/(p-2)}$ is divergent.

If $\alpha < 1/2 - 1/p$, then $R({\cal E}_p) \approx N^{1/2 - 1/p - \alpha } (\log(N+1))^{-\beta}$ and for all $k \le N/2$, one has
\[
\left(\sum_{j=k+1}^N \sigma_j^{2p/(p-2)} \right)^{1/2 - 1/p}
\approx
N^{1/2 - 1/p- \alpha} \, (\log(N+1))^{-\beta}.
\]
Therefore, whenever $n \lesssim N$, assumption~\eqref{eq:hypLower} is satisfied with $k = N/2$.
Since
\[
\left(\sum_{j=1}^{N/2} \sigma_j^{2p/(p-2)} \right)^{1/2 - 1/p}
\approx
N^{1/2 - 1/p -\alpha} \, (\log(N+1))^{-\beta},
\]
an application of Corollary~\ref{LowerEllpThm} shows that, with probability greater than $1 - 2e^{-2n}$,
\begin{align*}
	{\cal R}_n ( {\cal E}_p ) & \gtrsim
	\min\left(N^{1/2 - 1/p -\alpha} \, (\log(N+1))^{-\beta}, n^{-1/2} N^{1 - 1/p- \alpha} (\log(N+1))^{-\beta} \right)
	\\
	&  =
	N^{1/2 - 1/p -\alpha} \, (\log(N+1))^{-\beta},
\end{align*}
which is proportional to  the radius of ${\cal E}_p$.

The case $\alpha = 1/2 - 1/p$ and $\beta \le 1/2 - 1/p$ is treated similarly.

The remaining values of $n$ are
covered by Lemma \ref{lem:lowerbound-pol}.
\qed

\subsection{Two-valued sequence $\sigma$}
Fix $1\leq m< N$. Consider a sequence $\sigma$ satisfying
$$\sigma_1=\sigma_2=\ldots=\sigma_m> \sigma_{m+1}= \ldots=\sigma_N>0$$
and denote $A=\sigma_1/\sigma_N>1$.
For the sake of clarity, we consider the case $p=2$ only.
As in the previous section we split our results in a few propositions.
Note that  we deterministically have
$$
      \sigma_{n+1}\leq   {\cal R}_n({\cal E}_2)\leq \sigma_1.
$$
In the first proposition we discuss several cases when ${\cal R}_n({\cal E}_2)\approx \sigma_1$,
in particular showing that the random diameter can be much larger than the smallest one. Note that
if $n<m$ then  we deterministically have
$$
     {\cal R}_n({\cal E}_2)\geq \sigma_{n+1}= \sigma_1,
$$
therefore below we consider only the case $n\geq m$.

\begin{proposition}	\label{2-v-1}
Let $1\leq m\leq n<N$.
In the following two cases ${\cal R}_n$ is of the order of $\sigma_1$.

\smallskip

\noindent
(1) If
$$
   A^2\leq \frac{n}{n-m+1},
$$
 then for every $s\geq 1$ one has
 $${\cal R}_n({\cal E}_2) \geq  \sigma_1/(4(s+1))$$
  with probability at least
  $$1-\exp{(-n/8)}-\exp{(-2ns^2/A^2)}\geq 1 - e^{-  n/8} - e^{-  2s^2(n+1-m)}.$$

\smallskip

\noindent
(2) If
$$
   A^2\leq \frac{N-1}{64 n},
$$
then  with probability at least  $1 - 2e^{- 2 n}$ one has
${\cal R}_n({\cal E}_2) \geq  \sigma_1/2\,$.
\end{proposition}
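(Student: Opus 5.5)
Both parts are direct applications of the two general lower bounds, specialized to $p=2$ and the two-valued sequence. Part (1) uses Theorem~\ref{LowerThm} through Corollary~\ref{cor:low-dvor}; part (2) uses Corollary~\ref{LowerEllpThm} with $k=m$ or $k=1$.

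\textbf{Part (1).} Take $F=\RR^{n+1}$, so that $K\cap F$ is the ellipsoid with semi-axes $\sigma_1$ ($m$ times) and $\sigma_N$ ($n+1-m$ times). Then
$$
\ell(K\cap F)\le \Bigl(\E\sum_{i\le n+1} g_i^2/\sigma_i^2\Bigr)^{1/2} = \bigl(m/\sigma_1^2+(n+1-m)/\sigma_N^2\bigr)^{1/2},
$$
which gives
$$
\ell(K\cap F)\le \frac{1}{\sigma_1}\bigl(m+(n+1-m)A^2\bigr)^{1/2}.
$$
The hypothesis $A^2(n-m+1)\le n$ yields $\ell(K\cap F)\le \sqrt{m+n}/\sigma_1\le \sqrt{2n}/\sigma_1$. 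Also $R((K\cap F)^\circ)=1/\sigma_N=A/\sigma_1$. Apply Theorem~\ref{LowerThm} with $t=2s\sqrt n/A$. The denominator satisfies
$$
2\bigl(\ell(K\cap F)+tA/\sigma_1\bigr)\le 2\bigl(\sqrt{2n}+2s\sqrt n\bigr)/\sigma_1\le 4(s+1)\sqrt n/\sigma_1,
$$
since $\sqrt2\le 2$. This gives ${\cal R}_n\ge \sigma_1/(4(s+1))$ with failure probability at most $e^{-n/8}+e^{-t^2/2}=e^{-n/8}+e^{-2ns^2/A^2}$. The final inequality in the statement follows from $n/A^2\ge n-m+1$, which is the hypothesis again.

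\textbf{Part (2).} Use Corollary~\ref{LowerEllpThm} with $p=2$, so that $r_{k,2}=\sigma_1$, and try $k=m$. Condition \eqref{eq:hypLower} becomes
$$
\sum_{j>m}\sigma_j^2=(N-m)\sigma_N^2\ge 64n\,\sigma_N^2,
$$
that is, $N-m\ge 64n$. This follows from $A>1$ together with $N-1\ge 64nA^2$ whenever $m$ is small, but it can fail for large $m$. The choice $k=1$ is safer. There the condition reads
$$
\sum_{j>1}\sigma_j^2=(m-1)\sigma_1^2+(N-m)\sigma_N^2\ge 64n\,\sigma_2^2 .
$$
If $m=1$, then $\sigma_2=\sigma_N$ and the condition is $N-1\ge 64n$, which holds since $A>1$. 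If $m\ge2$, then $\sigma_2=\sigma_1$, and the left side is at least $(N-1)\sigma_N^2\ge 64nA^2\sigma_N^2=64n\sigma_1^2$; this uses that $(m-1)\sigma_1^2\ge (m-1)\sigma_N^2$.

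So $k=1\in I_2$ in both cases, and
$$
\sum_{j>1}\sigma_j^2\ge (N-1)\sigma_N^2\ge 64n\sigma_1^2 .
$$
The bound from Corollary~\ref{LowerEllpThm} is then at least
$$
\Bigl(1/\sigma_1+8\sqrt n\cdot\bigl(8\sqrt n\,\sigma_1\bigr)^{-1}\Bigr)^{-1}=\sigma_1/2,
$$
holding with probability $1-2e^{-2n}$.

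The only delicate point is choosing $k$ in part (2). Taking $k=m$ is the natural first guess but does not always satisfy \eqref{eq:hypLower}, which is why the argument uses $k=1$ and splits into the cases $m=1$ and $m\ge 2$.
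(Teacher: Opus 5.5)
Your proposal is correct and follows the paper's proof. Part (1) is the paper's application of Corollary~\ref{cor:lp-ellipsoid-lower}, which is Theorem~\ref{LowerThm} with $F=\RR^{n+1}$, using the same choice $t=2s\sqrt n/A$; part (2) is Corollary~\ref{LowerEllpThm} with $k=1$, exactly as in the paper. Your split into the cases $m=1$ and $m\ge 2$ is unnecessary, because the chain $\sum_{j>1}\sigma_j^2\ge (N-1)\sigma_N^2\ge 64n\sigma_1^2\ge 64n\sigma_2^2$ holds for every $m$.
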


\noindent
{\bf Proof.} Each of the two assumptions above allows us to use one of Corollaries 3.4 and 3.5, which leads to the claimed bounds as follows.

\noindent
{\bf Case 1. $A^2=\sigma_1^2/\sigma_N^2\leq n/(n-m+1).$}
We have
$$
 \sum_{i=1}^{n+1} \frac{1}{\sigma_i^2} = \frac{m}{\sigma_1^2} +  \frac{n+1-m}{\sigma_N^2}
 \leq \frac{m+n}{\sigma_1^2}\leq \frac{2n}{\sigma_1^2}\,.
$$
Corollary~\ref{cor:lp-ellipsoid-lower} implies that
 for every $t>0$, with probability at least $1 - e^{-  n/8} - e^{-t^2/ 2}$,
\[
 {\cal R}_n({\cal E}_2 ) \ge
 \frac{\sqrt n}{2 \left( \displaystyle \sqrt{2} \bigg( \sum_{i=1}^{n+1} \frac{1}{\sigma_i^2}\bigg)^{1/2} + \frac{t}{\sigma_{n+1}} \right)}\,.
\]
Choose $t=2s\sqrt{n} /A$. Then
\[
{\cal R}_n({\cal E}_2 ) \ge \frac{\sqrt n  \, \sigma_1  }{4\, \sqrt{n} + 4s\sqrt{n} }\geq
\frac{ \sigma_1  }{4(1+s) }
\]
with probability at least $1 - e^{-  n/8} - e^{-  2s^2n/A^2}\geq 1 - e^{-  n/8} - e^{-  2s^2(n+1-m)}$.

\smallskip

\noindent
{\bf Case 2.  $A^2=\sigma_1^2/\sigma_N^2\leq (N-1)/(64n).$ }
In this  case, we use Corollary~\ref{LowerEllpThm} which implies that
for $p=2$, with probability at least  $1 - 2e^{- 2 n}$,
one has
\[
{\cal R}_n({\cal E}_2 )
\ge
\Bigg(
\frac{1}{\sigma_1} +
\frac{8 \, \sqrt{n}}{ \left( \sum_{j>1} \sigma_j^{2} \right)^{1/2}}
\Bigg)^{-1}
\]
provided that
$
\sum_{j>1} \sigma_j^{2}
\ge
64 n \,  \sigma_{2}^2.$
Indeed,
$$
 \sum_{j=2}^N \sigma_j^{2}\geq (N-1)\sigma_N^2\geq 64n \sigma_1^2,\,
$$
therefore Corollary~\ref{LowerEllpThm} yields the result.
\qed

\smallskip

Next we provide a general lower bound when $n\geq m$ and $A$ is large enough.
As its proof essentially repeats word for word the proof of Case~1 of Proposition~\ref{2-v-1}
(with the choice $t=2s\sqrt{n+1-m}$), we omit it.

\begin{proposition}	\label{2-v-2}
Let $1\leq m\leq n<N$ and assume that
$$
    A^2\geq \frac{n}{n-m+1}\,.
$$
 Then for every $s\geq 1$ one has
 $$
   {\cal R}_n({\cal E}_2) \geq
   \sqrt{\frac{  n  }{  n+1-m}}\, \,
   \frac{  \sigma_N  }{  4 (s+1)}
 $$
    with probability at least
$$
  1 - e^{-  n/8} - e^{-  2s^2(n+1-m)}.
$$
\end{proposition}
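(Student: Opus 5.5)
We apply Corollary~\ref{cor:lp-ellipsoid-lower} with $p=2$, exactly as in Case~1 of Proposition~\ref{2-v-1}; the only change is the choice of $t$. For $p=2$ that corollary gives, for every $t>0$ and with probability at least $1-e^{-n/8}-e^{-t^2/2}$,
\[
{\cal R}_n({\cal E}_2)\ge \frac{\sqrt n}{2\left(\sqrt2\left(\sum_{i=1}^{n+1}\sigma_i^{-2}\right)^{1/2}+t/\sigma_{n+1}\right)}.
\]
Since $n\ge m$, we have $\sigma_{n+1}=\sigma_N$.

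The first step is to bound the sum in terms of $\sigma_N$ alone, using the hypothesis $A^2\ge n/(n-m+1)$. Write
\[
\sum_{i=1}^{n+1}\sigma_i^{-2}=\frac{m}{\sigma_1^2}+\frac{n+1-m}{\sigma_N^2}=\frac{1}{\sigma_N^2}\left(\frac{m}{A^2}+n+1-m\right).
\]
The hypothesis gives $m/A^2\le m(n-m+1)/n\le n-m+1$, so
\[
\left(\sum_{i=1}^{n+1}\sigma_i^{-2}\right)^{1/2}\le \frac{\sqrt{2(n+1-m)}}{\sigma_N}.
\]
Consequently the first term in the denominator, $\sqrt2\left(\sum_{i=1}^{n+1}\sigma_i^{-2}\right)^{1/2}$, is at most $2\sqrt{n+1-m}/\sigma_N$.

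The second step is to choose $t=2s\sqrt{n+1-m}$. Then the denominator is at most $2\cdot\frac{2\sqrt{n+1-m}}{\sigma_N}(1+s)=\frac{4(1+s)\sqrt{n+1-m}}{\sigma_N}$. Dividing $\sqrt n$ by this yields
\[
{\cal R}_n({\cal E}_2)\ge\sqrt{\frac{n}{n+1-m}}\,\frac{\sigma_N}{4(s+1)}.
\]
The failure probability is $e^{-n/8}+e^{-t^2/2}=e^{-n/8}+e^{-2s^2(n+1-m)}$, which is the claim.

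There is no real obstacle. The only point needing care is the sum estimate in the first step, since the hypothesis on $A$ is exactly what allows the $m/\sigma_1^2$ term to be absorbed into the $(n+1-m)/\sigma_N^2$ term.
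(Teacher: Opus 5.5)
Your proposal is correct and is exactly the argument the paper intends. The paper omits the proof, saying it repeats Case~1 of Proposition~\ref{2-v-1} with the choice $t=2s\sqrt{n+1-m}$; you do this via Corollary~\ref{cor:lp-ellipsoid-lower}, using the hypothesis on $A$ to absorb $m/\sigma_1^2$ into $(n+1-m)/\sigma_N^2$.
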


Finally we provide the upper bound, which is an immediate consequence of
Corollary~\ref{cor:lp-ellipsoid} applied with $k=m$ (and of Proposition~\ref{2-v-2}
for the ``in particular" part).

\begin{proposition}	\label{2-v-3}
Let $1\leq m\leq n<N$.
 Let  $C\geq 2$ and $\gamma$ be constants from Corollary~\ref{cor:lp-ellipsoid}.
Then for every $0 < \eps \le 1/(2C)$
with probability at least
\[
1 - 3 (C \eps)^{n-m+1} - m e^{- \gamma n}
\]
one has
\[
{\cal R}_n({\cal E}_2) \le \frac{2 \sqrt{2}}{\eps} \,\, \sqrt{\frac{n}{n-m +1}}\,  \,
\left( 4 \log(1/\eps) + \sqrt{\frac{N-m}{n-m+1}}\right) \sigma_{N}.
\]
In particular, if
$$
   A^2\geq \frac{n}{n-m+1}\quad \quad \mbox{and} \quad \quad
   N\leq C_1(n-m+1)+m
$$
for some absolute positive constant $C_1$ and, say, $\eps=1/2C$, then
$$
   {\cal R}_n({\cal E}_2) \approx \sqrt{\frac{n}{n-m +1}}\,   \sigma_{N}
$$
with large probability.
\end{proposition}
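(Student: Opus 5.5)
The plan is to apply Corollary~\ref{cor:lp-ellipsoid}, part (2) with $p=p'=2$, taking $k=m$, so that the constant part of the sequence is absorbed by the projection onto $E=(\RR^m)^\perp$. With $k=m$ the corollary gives, with probability at least $1-3(C\eps)^{n-m+1}-me^{-\gamma n}$,
\[
{\cal R}_n({\cal E}_2)\le \frac{8\sqrt{\log(1/\eps)}}{\eps}\sqrt{\frac{2n}{n-m+1}}\,\sigma_{m+1}+\frac{2\sqrt{2n}}{\eps(n-m+1)}\Big(\sum_{i>m}\sigma_i^2\Big)^{1/2}.
\]
Since $\sigma_i=\sigma_N$ for all $i>m$, we have $\sigma_{m+1}=\sigma_N$ and $\big(\sum_{i>m}\sigma_i^2\big)^{1/2}=\sqrt{N-m}\,\sigma_N$. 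The second term therefore equals
\[
\frac{2\sqrt 2}{\eps}\sqrt{\frac{n}{n-m+1}}\sqrt{\frac{N-m}{n-m+1}}\,\sigma_N.
\]
For the first term, note that $8\sqrt{\log(1/\eps)}\sqrt2=2\sqrt2\cdot 4\sqrt{\log(1/\eps)}\le 2\sqrt2\cdot 4\log(1/\eps)$, using $\log(1/\eps)\ge1$ (valid since $\eps\le 1/(2C)\le1/4$). Factoring out $\frac{2\sqrt2}{\eps}\sqrt{n/(n-m+1)}\,\sigma_N$ then gives exactly the displayed bound. The only care needed is with constants, since the corollary's constant $8$ may hide a small loss relative to Theorem~\ref{MainThm}. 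If the constants do not match, I would instead go back to Theorem~\ref{MainThm} directly, with $R(P_E{\cal E}_2)=\sigma_N$ and $\ell_*(P_E{\cal E}_2)\le \sqrt{N-m}\,\sigma_N$, where the coefficient $6\sqrt2\sqrt{\log(1/\eps)}\le 8\sqrt2\log(1/\eps)$ leaves ample room.

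For the ``in particular'' part, fix $\eps=1/(2C)$, so that $\log(1/\eps)$ and $1/\eps$ are absolute constants. The assumption $N\le C_1(n-m+1)+m$ gives $(N-m)/(n-m+1)\le C_1$, so the parenthesis is bounded by an absolute constant. The upper bound thus becomes ${\cal R}_n({\cal E}_2)\lesssim \sqrt{n/(n-m+1)}\,\sigma_N$. The probability is $1-3\cdot 2^{-(n-m+1)}-me^{-\gamma n}$, which is large when $n-m+1$ is large and $m\ll e^{\gamma n}$; this is the meaning of ``large probability''.

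The matching lower bound follows from Proposition~\ref{2-v-2} with $s=1$, whose hypothesis $A^2\ge n/(n-m+1)$ is assumed. It gives ${\cal R}_n({\cal E}_2)\ge \sqrt{n/(n-m+1)}\,\sigma_N/8$ with probability at least $1-e^{-n/8}-e^{-2(n+1-m)}$. Intersecting the two events by a union bound yields ${\cal R}_n({\cal E}_2)\approx\sqrt{n/(n-m+1)}\,\sigma_N$. The main (mild) obstacle is simply the constant bookkeeping in the first step; everything else is direct substitution.
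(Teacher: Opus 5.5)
Your proposal is correct and follows the paper's proof: the upper bound is Corollary~\ref{cor:lp-ellipsoid}(2) with $p=2$ and $k=m$, using $\sigma_{m+1}=\sigma_N$, $\sum_{i>m}\sigma_i^2=(N-m)\sigma_N^2$ and $\sqrt{\log(1/\eps)}\le\log(1/\eps)$, and the ``in particular'' part combines this with Proposition~\ref{2-v-2}. The constants match exactly, so the fallback to Theorem~\ref{MainThm} is not needed.
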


\begin{remark}
Note that our propositions do not  provide a sharp answer for
the interesting case when  $N=2n=2m$  and $A=\sigma_1/\sigma_N> \sqrt{n}$.
Then we only get
$$
  c\sqrt{n} \sigma_N \leq {\cal R}_n \leq C n \sigma_N.
$$
In particular, if $A\leq n$ the upper bound is trivial.
It would be interesting to obtain sharp bounds for this case.
\end{remark}

\subsection{Proof of Theorem~\ref{cor:dichotomy}}

The first case follows from Theorem~B (see the beginning of this section). Indeed, if
$ \sum_j \sigma_j^{p'}$ is convergent, then the sequence
\[
\varepsilon_n = \left( \sum_{j > \tilde{c}\,n} \sigma_j^{p'} \right)^{1/p'}
\]
goes to zero as $n$ goes to infinity (where $\tilde{c}$ here, as well as $\widetilde{C}$ and $\gamma$ in the next sentence, are the absolute constants fixed by Theorem B). From \eqref{thmA}, one has for  every $1 \le n < N$,
\[
\sqrt{n} \,{\cal R}_n({\cal E}_p) \le \widetilde{C} \sqrt{p'} \, \eps_n
\]
with probability greater than $1 - e^{-\gamma n}$.

\medskip
The second case is a consequence of Corollary \ref{LowerEllpThm}. Since $ \sum_j \sigma_j^{p'}$ is divergent, the sequence
$$
  S_N := \bigg(  \displaystyle \sum_{j = 2}^N \sigma_j^{p'} \bigg)^{1/p'}
$$
goes to infinity  as $N \to \infty$.  If $1 < p \le 2$, we define
$$
  f(N) := S_N^2 / (64 \sigma_1^2)\leq S_N^2 / (64 \sigma_2^2) \leq N,
$$
 and we observe that, for $ n \le f(N)$, the set $I_p$ introduced in Corollary~\ref{LowerEllpThm}
  contains the integer $k = 1$. Moreover, the condition $n \le f(N)$ implies that $\sigma_1 \leq S_N/(8\sqrt{n})$. Therefore,
with probability at least  $1 - 2e^{- 2 n}$ one has
\[
{\cal R}_n({\cal E}_p )
\geq
\frac{1}{2} \,  \min\Bigg\{ r_{1, p}, \,
	\frac{S_N}{8 \, \sqrt{n}} \Bigg\}
\geq  \frac{\sigma_1}{2}.
\]
If $p > 2$, we define $f(N) := S_N^2 / (64 A_N^2)$, where
\[A_N := \bigg( \displaystyle\sum_{j = 1}^N \sigma_j^{2p/(p-2)} \bigg)^{1/2 - 1/p}\geq \bigg( \displaystyle\sum_{j = 2}^N \sigma_j^{2p/(p-2)} \bigg)^{1/2 - 1/p}\geq \sigma_2\]
(here and below, just as in Corollary~\ref{LowerEllpThm}, $p/(p-2)$ is understood as $1/(1-2/p)=1$ when $p=\infty$).
Therefore again $f(N)\leq N$. Moreover, since
$$
  \frac{2p}{p-2} = \frac{p^2}{(p-1)(p-2)}+ \frac{p}{p-1} =\frac{p' \, p}{p-2} +p'
  \quad \mbox{ and } \quad \sigma_j\leq \sigma_1,
$$
 one has
\[
A_N \le
\sigma_{1}^{p'/2} \bigg( \sum_{j = 1}^N \sigma_j^{p'} \bigg)^{1/2 - 1/p}
\le \sigma_{1}^{p'/2}  S_N^{1-p'/2}\left(1+\tfrac{\sigma_1}{S_N}\right)^{1-p'/2}.
\]
This implies  that
\[
f(N) \ge \frac{S_N^{p'}} {64 \sigma_{1}^{p'} \left(1+\tfrac{\sigma_1}{S_N}\right)^{2-p'}},
\]
which tends to infinity as $N$ goes to infinity. Similarly to the previous case,
if $n \le f(N)$, then the set $I_p$ introduced in Corollary~\ref{LowerEllpThm}
 contains the integer $k = 1$. Therefore
with probability at least  $1 - 2e^{- 2 n}$ one has
\[
{\cal R}_n({\cal E}_p )
\geq
\frac{1}{2} \,  \min\Bigg\{ r_{1, p}, \,
\frac{S_N}{8 \, \sqrt{n}}  \Bigg\}
\geq \frac{1}{2} \,  \min\{ \sigma_{1}, A_N \} \ge \frac{\sigma_1}{2},
\]
where we used that $n \le f(N)=S_N^2 / (64 A_N^2)$ for the intermediate inequality.
This completes the proof.
\qed

\bigskip 

\noindent 
{\bf Acknowledgement. }  Parts of this project were carried out while the second- and fourth-named authors were in residence at the Hausdorff Research Institute for Mathematics in Winter 2024, during visits of the second-named author to Universit{\'e} Gustave Eiffel in December 2024 and in May–June 2025, and during a visit of the first-named author to the University of Alberta in January 2026. The authors are grateful for the financial support and the excellent working conditions provided by all these institutions.

\bibliographystyle{abbrv}
\bibliography{biblio}

@article{Paouris-Tikhomirov-Valettas-paper,
	author = {Paouris, Grigoris and Tikhomirov, Konstantin and Valettas, Petros},
	title = {Hypercontractivity and lower deviation estimates in normed spaces},
	fjournal = {The Annals of Probability},
	journal = {Ann. Probab.},
	issn = {0091-1798},
	volume = {50},
	number = {2},
	pages = {688--734},
	year = {2022},
	language = {English},
	doi = {10.1214/21-AOP1543},
	url = {projecteuclid.org/journals/annals-of-probability/volume-50/issue-2/Hypercontractivity-and-lower-deviation-estimates-in-normed-spaces/10.1214/21-AOP1543.full},
	zbMATH = {7512874},
	Zbl = {1496.46006}
}

@article{Klartag-Vershynin-paper,
	author = {Klartag, B. and Vershynin, R.},
	title = {Small ball probability and {Dvoretzky}'s {Theorem}},
	fjournal = {Israel Journal of Mathematics},
	journal = {Isr. J. Math.},
	issn = {0021-2172},
	volume = {157},
	pages = {193--207},
	year = {2007},
	language = {English},
	doi = {10.1007/s11856-006-0007-1},
	zbMATH = {5145475},
	Zbl = {1120.46003}
}

@book{Carl-book,
 author = {Carl, Bernd and Stephani, Irmtraud},
 title = {Entropy, compactness and the approximation of operators},
 fseries = {Cambridge Tracts in Mathematics},
 series = {Camb. Tracts Math.},
 issn = {0950-6284},
 volume = {98},
 isbn = {0-521-33011-4},
 year = {1990},
 publisher = {Cambridge etc.: Cambridge University Press},
 language = {English},
 zbMATH = {44592},
 Zbl = {0705.47017}
}

@article{HPS,
	author = {Hinrichs, Aicke and Prochno, Joscha and Sonnleitner, Mathias},
	title = {Random sections of {{\(\ell_p\)}}-ellipsoids, optimal recovery and {Gelfand} numbers of diagonal operators},
	fjournal = {Journal of Approximation Theory},
	journal = {J. Approx. Theory},
	issn = {0021-9045},
	volume = {293},
	pages = {19},
	year = {2023},
	language = {English},
	doi = {10.1016/j.jat.2023.105919},
	zbMATH = {7720797},
	Zbl = {1531.52005}
}

@article{HKNPU,
	author = {Hinrichs, Aicke and Krieg, David and Novak, Erich and Prochno, Joscha and Ullrich, Mario},
	title = {Random sections of ellipsoids and the power of random information},
	fjournal = {Transactions of the American Mathematical Society},
	journal = {Trans. Am. Math. Soc.},
	issn = {0002-9947},
	volume = {374},
	number = {12},
	pages = {8691--8713},
	year = {2021},
	language = {English},
	doi = {10.1090/tran/8502},
	zbMATH = {7618813},
	Zbl = {1501.60008}
}

@article{Gordon1985,
	author = {Gordon, Yehoram},
	title = {Some inequalities for {Gaussian} processes and applications},
	fjournal = {Israel Journal of Mathematics},
	journal = {Isr. J. Math.},
	issn = {0021-2172},
	volume = {50},
	pages = {265--289},
	year = {1985},
	language = {English},
	doi = {10.1007/BF02759761},
	zbMATH = {4084666},
	Zbl = {0663.60034}
}

@misc{Gordon1988,
	author = {Gordon, Y.},
	title = {On {Milman}'s inequality and random subspaces which escape through a mesh in {{\({\mathbb{R}}^ n\)}}},
year = {1988},
language = {English},
howpublished = {Geometric aspects of functional analysis, {Isr}. {Semin}. 1986-87, {Lect}. {Notes} {Math}. 1317, 84-106 (1988)},
doi = {10.1007/bfb0081737},
zbMATH = {4061904},
Zbl = {0651.46021}
}

@article{Gordon-ann-88,
 author = {Gordon, Yehoram},
 title = {Gaussian processes and almost spherical sections of convex bodies},
 fjournal = {The Annals of Probability},
 journal = {Ann. Probab.},
 issn = {0091-1798},
 volume = {16},
 number = {1},
 pages = {180--188},
 year = {1988},
 language = {English},
 doi = {10.1214/aop/1176991893},
 zbMATH = {4042961},
 Zbl = {0639.60046}
}

@article{Chevet1977,
   author = {Chevet, S.},
   title = {S{\'e}ries de variables al{\'e}atoires {Gaussiennes} {\`a} valeurs dans {$E\otimes_\varepsilon F$}. {Application} aux produits d'espaces de {Wiener} abstraits},
fjournal = {Semin. {Geom}. des {Espaces} de {Banach}, {Ec}. polytech., {Cent}. {Math}., 1977-1978, {Expose} {No}. 19, 1-15 (1978)},
journal = {Semin. {Geom}. des {Espaces} de {Banach}, {Ec}. polytech., {Cent}. {Math}., 1977-1978, {Expose} {No}. 19, 1-15 (1978)},
year = {1978},
language = {French},
url = {https://eudml.org/doc/109175},
zbMATH = {3613881},
Zbl = {0395.60004}
}

@article{RV2008,
	author = {Rudelson, Mark and Vershynin, Roman},
	title = {The {Littlewood}-{Offord} problem and invertibility of random matrices},
	fjournal = {Advances in Mathematics},
	journal = {Adv. Math.},
	issn = {0001-8708},
	volume = {218},
	number = {2},
	pages = {600--633},
	year = {2008},
	language = {English},
	doi = {10.1016/j.aim.2008.01.010},
	zbMATH = {5268159},
	Zbl = {1139.15015}
}

@book{Ledoux,
	author = {Ledoux, Michel},
	title = {The concentration of measure phenomenon},
	fseries = {Mathematical Surveys and Monographs},
	series = {Math. Surv. Monogr.},
	issn = {0076-5376},
	volume = {89},
	isbn = {0-8218-2864-9},
	year = {2001},
	publisher = {Providence, RI: American Mathematical Society (AMS)},
	language = {English},
	zbMATH = {1674764},
	Zbl = {0995.60002}
}

@article{PajorTomczak,
	author = {Pajor, Alain and Tomczak-Jaegermann, Nicole},
	title = {Subspaces of small codimension of finite-dimensional {Banach} spaces},
	fjournal = {Proceedings of the American Mathematical Society},
	journal = {Proc. Am. Math. Soc.},
	issn = {0002-9939},
	volume = {97},
	pages = {637--642},
	year = {1986},
	language = {English},
	doi = {10.2307/2045920},
	zbMATH = {4011192},
	Zbl = {0623.46008}
}

@book{MS-1200,
	author = {Milman, Vitali D. and Schechtman, Gideon},
	title = {Asymptotic theory of finite dimensional normed spaces. {With} an appendix by {M}. {Gromov}: {Isoperimetric} inequalities in {Riemannian} manifolds},
	fseries = {Lecture Notes in Mathematics},
	series = {Lect. Notes Math.},
	issn = {0075-8434},
	volume = {1200},
	year = {1986},
	publisher = {Springer, Cham},
	language = {English},
	doi = {10.1007/978-3-540-38822-7},
	zbMATH = {3979763},
	Zbl = {0606.46013}
}

@book{NW-1,
	author = {Novak, Erich and Wo{\'z}niakowski, Henryk},
	title = {Tractability of multivariate problems. {Volume} {I}: {Linear} information},
	fseries = {EMS Tracts in Mathematics},
	series = {EMS Tracts Math.},
	volume = {6},
	isbn = {978-3-03719-026-5},
	year = {2008},
	publisher = {Z{\"u}rich: European Mathematical Society (EMS)},
	language = {English},
	doi = {10.4171/026},
	zbMATH = {5320563},
	Zbl = {1156.65001}
}

@book{NW-2,
	author = {Novak, Erich and Wo{\'z}niakowski, Henryk},
	title = {Tractability of multivariate problems. {Volume} {II}: {Standard} information for functionals.},
	fseries = {EMS Tracts in Mathematics},
	series = {EMS Tracts Math.},
	volume = {12},
	isbn = {978-3-03719-084-5},
	year = {2010},
	publisher = {Z{\"u}rich: European Mathematical Society (EMS)},
	language = {English},
	doi = {10.4171/084},
	zbMATH = {5730459},
	Zbl = {1241.65025}
}

@book{NW-3,
	author = {Novak, Erich and Wo{\'z}niakowski, Henryk},
	title = {Tractability of multivariate problems. {Volume} {III}: {Standard} information for operators},
	fseries = {EMS Tracts in Mathematics},
	series = {EMS Tracts Math.},
	volume = {18},
	isbn = {978-3-03719-116-3},
	year = {2012},
	publisher = {Z{\"u}rich: European Mathematical Society (EMS)},
	language = {English},
	doi = {10.4171/116},
	zbMATH = {6100666},
	Zbl = {1359.65003}
}

@misc{M1985,
	author = {Milman, V. D.},
	title = {Random subspaces of proportional dimension of finite dimensional normed spaces: {Approach} through the isoperimetric inequality},
	year = {1985},
	language = {English},
	howpublished = {Banach spaces, {Proc}. {Conf}., {Columbia}/{Mo}. 1984, {Lect}. {Notes} {Math}. 1166, 106-115 (1985).},
	doi = {10.1007/bfb0074700},
	zbMATH = {3944537},
	Zbl = {0588.46013}
}

@article{GianMil1997,
	author = {Giannopoulos, A. A. and Milman, V. D.},
	title = {On the diameter of proportional sections of a symmetric convex body},
	fjournal = {IMRN. International Mathematics Research Notices},
	journal = {Int. Math. Res. Not.},
	issn = {1073-7928},
	volume = {1997},
	number = {1},
	pages = {5--19},
	year = {1997},
	language = {English},
	doi = {10.1155/S1073792897000020},
	zbMATH = {1001426},
	Zbl = {0877.52009}
}

@article{GiaMT2005,
	author = {Giannopoulos, Apostolos and Milman, Vitali D. and Tsolomitis, Antonis},
	title = {Asymptotic formulas for the diameter of sections of symmetric convex bodies},
	fjournal = {Journal of Functional Analysis},
	journal = {J. Funct. Anal.},
	issn = {0022-1236},
	volume = {223},
	number = {1},
	pages = {86--108},
	year = {2005},
	language = {English},
	doi = {10.1016/j.jfa.2004.10.006},
	zbMATH = {2183821},
	Zbl = {1088.46006}
}

@article{GiaM1998,
	author = {Giannopoulos, A. A. and Milman, V. D.},
	title = {Mean width and diameter of proportional sections of a symmetric convex body},
	fjournal = {Journal f{\"u}r die Reine und Angewandte Mathematik},
	journal = {J. Reine Angew. Math.},
	issn = {0075-4102},
	volume = {497},
	pages = {113--139},
	year = {1998},
	language = {English},
	doi = {10.1515/crll.1998.036},
	zbMATH = {1129236},
	Zbl = {0896.52007}
}

@incollection{LT2000,
	author = {Litvak, A. E. and Tomczak-Jaegermann, N.},
	title = {Random aspects of high-dimensional convex bodies},
	booktitle = {Geometric aspects of functional analysis. Proceedings of the Israel seminar (GAFA) 1996--2000},
	isbn = {3-540-41070-8},
	pages = {169--190},
	year = {2000},
	publisher = {Berlin: Springer},
	language = {English},
	doi = {10.1007/bfb0107214},
	zbMATH = {1574604},
	Zbl = {0986.52003}
}

@book{Pisier1989,
	author = {Pisier, Gilles},
	title = {The volume of convex bodies and {Banach} space geometry},
	fseries = {Cambridge Tracts in Mathematics},
	series = {Camb. Tracts Math.},
	issn = {0950-6284},
	volume = {94},
	isbn = {0-521-36465-5},
	year = {1989},
	publisher = {Cambridge etc.: Cambridge University Press},
	language = {English},
	zbMATH = {194093},
	Zbl = {0698.46008}
}

@book{AGM,
	author = {Artstein-Avidan, Shiri and Giannopoulos, Apostolos and Milman, Vitali D.},
	title = {Asymptotic geometric analysis. {I}},
	fseries = {Mathematical Surveys and Monographs},
	series = {Math. Surv. Monogr.},
	issn = {0076-5376},
	volume = {202},
	isbn = {978-1-4704-2193-9},
	year = {2015},
	publisher = {Providence, RI: American Mathematical Society (AMS)},
	language = {English},
	doi = {10.1090/surv/202},
	zbMATH = {6457742},
	Zbl = {1337.52001}
}

@book{DTU-CRM-Barcelona-2018,
	author = {D{\~u}ng, Dinh and Temlyakov, Vladimir and Ullrich, Tino},
	editor = {Tikhonov, Sergey},
	title = {Hyperbolic cross approximation. {Lecture} notes given at the courses on constructive approximation and harmonic analysis, {Barcelona}, {Spain}, {May} 30 -- {June} 3, 2016},
	fseries = {Advanced Courses in Mathematics -- CRM Barcelona},
	series = {Adv. Courses in Math. -- CRM Barc.},
	issn = {2297-0304},
	isbn = {978-3-319-92239-3; 978-3-319-92240-9},
	year = {2018},
	publisher = {Cham: Birkh{\"a}user},
	language = {English},
	doi = {10.1007/978-3-319-92240-9},
	zbMATH = {6967896},
	Zbl = {1414.41001}
}

@misc{KU2026-ActaNumerica,
	author = {Krieg, David and Ullrich, Mario},
	title = {Approximation of {Functions}: {Optimal} {Sampling} and {Complexity}},
	year = {2026},
	howpublished = {Preprint, {arXiv}:2602.02066 [math.{NA}] (2026)},
	url = {https://arxiv.org/abs/2602.02066},
	arXiv = {arXiv:2602.02066}
}

@article{LitvPajorTom,
	author = {Litvak, A. E. and Pajor, A. and Tomczak-Jaegermann, N.},
	title = {Diameters of sections and coverings of convex bodies},
	fjournal = {Journal of Functional Analysis},
	journal = {J. Funct. Anal.},
	issn = {0022-1236},
	volume = {231},
	number = {2},
	pages = {438--457},
	year = {2006},
	language = {English},
	doi = {10.1016/j.jfa.2005.06.013},
	zbMATH = {5019837},
	Zbl = {1092.52004}
}

@article{M-Dvor,
 author = {Mil'man, V. D.},
 title = {New proof of the theorem of {A}. {Dvoretzky} on intersections of convex bodies},
 fjournal = {Functional Analysis and its Applications},
 journal = {Funct. Anal. Appl.},
 issn = {0016-2663},
 volume = {5},
 pages = {288--295},
 year = {1972},
 language = {English},
 doi = {10.1007/BF01086740},
 zbMATH = {3379583},
 Zbl = {0239.46018}
}

@book{NTJ-book,
 author = {Tomczak-Jaegermann, Nicole},
 title = {Banach-{Mazur} distances and finite-dimensional operator ideals},
 fseries = {Pitman Monographs and Surveys in Pure and Applied Mathematics},
 series = {Pitman Monogr. Surv. Pure Appl. Math.},
 issn = {0269-3666},
 volume = {38},
 isbn = {0-582-01374-7; 0-470-20982-8},
 year = {1989},
 publisher = {Harlow: Longman Scientific \&| Technical; New York: John Wiley \&| Sons, Inc.},
 language = {English},
 zbMATH = {194266},
 Zbl = {0721.46004}
}

@article{EmMil,
	author = {Milman, Emanuel and Yifrach, Yuval},
	title = {Regular random sections of convex bodies and the random quotient-of-subspace theorem},
	fjournal = {Journal of Functional Analysis},
	journal = {J. Funct. Anal.},
	issn = {0022-1236},
	volume = {281},
	number = {7},
	pages = {22},
	note = {Id/No 109133},
	year = {2021},
	language = {English},
	doi = {10.1016/j.jfa.2021.109133},
	zbMATH = {7367927},
	Zbl = {1476.46017}
}

@article{GG1984,
	author = {Garnaev, A. Yu. and Gluskin, E. D.},
	title = {On widths of the {Euclidean} ball},
	fjournal = {Soviet Mathematics. Doklady},
	journal = {Sov. Math., Dokl.},
	issn = {0197-6788},
	volume = {30},
	pages = {200--204},
	year = {1984},
	language = {English},
	zbMATH = {3944477},
	Zbl = {0588.41022}
}

@article{K77,
	author = {Kasin, B. S.},
	title = {Diameters of some finite-dimensional sets and classes of smooth functions},
	fjournal = {Mathematics of the USSR. Izvestiya},
	journal = {Math. USSR, Izv.},
	issn = {0025-5726},
	volume = {11},
	pages = {317--333},
	year = {1977},
	language = {English},
	doi = {10.1070/IM1977v011n02ABEH001719},
	zbMATH = {3589184},
	Zbl = {0378.46027}
}

@incollection{HKNPU-Survey,
	author = {Hinrichs, Aicke and Krieg, David and Novak, Erich and Prochno, Joscha and Ullrich, Mario},
	title = {On the power of random information},
	booktitle = {Multivariate algorithms and information-based complexity},
	isbn = {978-3-11-063311-5; 978-3-11-063546-1},
	pages = {43--64},
	year = {2020},
	publisher = {Berlin: De Gruyter},
	language = {English},
	doi = {10.1515/9783110635461-004},
	zbMATH = {7224895},
	Zbl = {1523.65110}
}

\address
\end{document}